\numberwithin{equation}{section}
\newcommand{\Rmnum}[1]{\expandafter\@slowromancap\romannumeral #1@}
\newtheorem{lemma}{Lemma}[section]
\newtheorem{proposition}[lemma]{Proposition}
\newtheorem{theorem}{Theorem}[section]
\theoremstyle{remark}
\newtheorem{remark}{Remark}[section]
\newcommand{\vertiii}[1]{{\left\vert\kern-0.25ex\left\vert\kern-0.25ex\left\vert #1
		\right\vert\kern-0.25ex\right\vert\kern-0.25ex\right\vert}}
\def\wideubar{\underaccent{{\cc@style\underline{\mskip10mu}}}}
\def\Wideubar{\underaccent{{\cc@style\underline{\mskip8mu}}}}
\def\widebar{\accentset{{\cc@style\underline{\mskip10mu}}}}
\def\Widebar{\accentset{{\cc@style\underline{\mskip8mu}}}}
\newcommand{\lrn}{\left\vert\kern-0.3ex\left\vert\kern-0.3ex\left\vert}
\newcommand{\rrn}{\right\vert\kern-0.3ex\right\vert\kern-0.3ex\right\vert}
\newcommand{\bemph}[1]{{\upshape#1}} 
\newcommand{\ep}[1]{\bemph{(}#1\bemph{)}} 
\newcommand*\circled[1]{\tikz[baseline=(char.base)]{
		\node[shape=circle,draw,inner sep=1pt] (char) {#1};}}
\numberwithin{equation}{section}
\begin{document}

	\title{
		\bf  Global Smooth Radially Symmetric Solutions to a Multidimensional Radiation Hydrodynamics Model}
	
	\author[a]{Huijiang Zhao\thanks{Corresponding author.
			Email: hhjjzhao@hotmail.com(H. Zhao); boran\_zhu@outlook.com(B. Zhu)}}
	\author[a]{Boran Zhu}
	\affil[a]{\small School of Mathematics and Statistics, Wuhan University, Wuhan 430072, China}
	\date{\empty}
	\maketitle
	\begin{abstract}
		
		The motion of a compressible inviscid radiative flow can be described by the radiative Euler equations, which consists of the Euler system coupled with a Poisson equation for the radiative heat flux through the energy equation. Although solutions of the compressible Euler system will generally develop singularity no matter how smooth and small the initial data are, it is believed that the radiation effect does imply some dissipative mechanism, which can guarantee the global regularity of the solutions of the radiative Euler equations at least for small initial data.
		
		Such an expectation was rigorously justified for the one-dimensional case, as for the multidimensional case, to the best of our knowledge, no result was available up to now. The main purpose of this paper is to show that the initial-boundary value problem of such a radiative Euler equation in a three-dimensional bounded concentric annular domain does admit a unique global smooth radially symmetric solution provided that the initial data is sufficiently small.	
		\vspace{2mm}
		
		\noindent  {\bf Keywords:} Multidimensional radiative Euler equation;  Global smooth radially symmetric solution; Symmetric hyperbolic system; A priori estimates.

		\vspace{2mm}
	\end{abstract}
	\section{introduction}
	
	In the modeling of astrophysical flows, reentry problems, or high temperature combustion phenomena, we have to deal with high-temperature fluids and when the temperatures of the fluids are more than 10000K, radiative effect should be taken into consideration \ep{\cite{2004Radiation}, \cite{MR0111583}}.
	
	In fact, when fluid interacts with radiation through energy exchange, since the momentum caused by the radiation can be neglected, while the radiative flux must be added into the energy equation since the transport of energy carried by the radiation process is more important, we can then use the following radiative Euler equations, which is a compressible Euler system coupled with an elliptic equation for radiation flux, as an approximate system to describe the motion of radiation hydrodynamics \ep{For more Physical background, we refer to Chapter \Rmnum{11} and \Rmnum{12} in \cite{1965Introduction}, and for derivation of the equation for radiation flux, one can also refer to \cite{MR3403137} and \cite{MR2802993}}:
	\begin{equation}
	\label{cns1}
	\left\{	
	\begin{aligned}
	&\rho_t+\nabla_x\cdot(\rho \bm{u})=0,\\
	&(\rho {\bf u})_t+\nabla_x\cdot(\rho({\bf u}\otimes{\bf u})+\nabla_x P=0,\\
	&\left(\rho\left(e+\frac{|{\bf u}|^2}{2}\right)\right)_t+\nabla_x\cdot \left(\left(\rho\left(e+\frac{|{\bf u}|^2}{2}\right)+P\right){\bf u}\right)+\nabla_x\cdot{\bf q}=0,\\
	&-\nabla_x\left(\nabla_x\cdot{\bf q}\right)+a_1{\bf q}+b_1\nabla_x\left(\theta^4\right)=0.
	\end{aligned}	
	\right.
	\end{equation}
	Here $a_1>0, b_1>0$ are positive constants. The primary dependent variables are the fluid density $\rho$, the fluid velocity ${\bf u}\in \mathbb{R}^3$, the absolute temperature $\theta$, and the radiative heat flux ${\bf q}\in \mathbb{R}^3$. The pressure $P$, the internal energy $e$, and the other three thermodynamic variables the density $\rho$, the absolute temperature $\theta$, and the specific entropy $s$ are related through Gibbs' equation $de=\theta ds-Pd\rho^{-1}$.
	
	Throughout this paper, we consider only ideal, polytropic gases:
	\begin{eqnarray}\label{transform}
	P&=&R\rho\theta=A\rho^{\frac{1+C_v}{C_v}}{\rm exp}\left(\frac{s}{C_v}\right),\nonumber \\
	\theta&=&\frac AR\rho^{\frac{1}{C_v}}{\rm exp}\left(\frac{s}{C_v}\right),\\
	\rho&=&\left(\frac AR\right)^{\frac{C_v}{C_v+1}}P^{\frac{C_v}{C_v+1}}{\rm exp}\left(-\frac{s}{C_v+1}\right),\nonumber
	\end{eqnarray}
	where $R>0, A>0$ (specific gas constant) and $C_v>0$ (specific heat at constant volume) are positive
	constants. Without loss of generality, we assume in the rest of this paper that $a_1=b_1=R=1$.

	Due to the high nonlinearities of the rariative Euler equations \eqref{cns1} and the complex physical phenomena described by it, its mathematical theory is one of the hottest topics in the field of nonlinear partial differential equations. The main observation is that, although solutions of the compressible Euler system will generally develop singularity no matter how smooth and how small the initial data are \ep{one can refer to \citet{MR0369934}, \citet{MR0165243}, \citet{MR0815196} and the book \cite{MR3288725}}, it is believed that the radiation effect does imply some dissipative mechanism, which can guarantee the global regularity of the solutions to radiative Euler equations at least for small initial data. One of the main mathematical problems concerning the radiative Euler equations \eqref{cns1} is to justify the above expectation rigorously.
	
	For such a problem, the results for one-dimensional case are quite complete, one can refer to \citet{MR2022134} for the global existence of classical solutions and \citet{MR4085490} for pointwise structure. About the stability of elementary waves, one can refer to \citet{MR2234210} and \cite{MR2371479} respectively for the existence and stability of shock wave, \citet{MR2836843} for rarefaction wave and \citet{MR2812581} for viscous contact wave. As for the composite of several elementary waves, there are \citet{MR3906863} about two viscous shock waves,  \citet{MR2873128} about viscous contact wave and rarefaction waves. There are results with respect to some asymptotic limit of the radiation hydrodynamics towards elementary waves, we refer to \citet{MR3492077}, \citet{MR2800574} and \citet{MR3015674}.
	
	While for the multidimensional case, to the best of our knowledge, no result was available up to now. The main purpose of this paper is to show that the initial-boundary value problem of such a radiative Euler equations in a three-dimensional bounded concentric annular domain does admit a unique global smooth radially symmetric solution provided that the initial data is sufficiently small.
	
	Now we transfer the system in our case to a desired form. Since all thermodynamics variables $\rho,\ \theta,\ e,\ P$ as well as the entropy $s$ can be written as functions of any two of them, if we take $P$ and $s$ as independent variables,
	the system \eqref{cns1} can be rewritten in terms of $P$, $\bm{u}$, $s$ and $\bm{q}$ as
	\begin{equation}
	\label{cns2}
	\left\{	
	\begin{aligned}
	&P_t+\left(\bm{u}\cdot\nabla\right)P+\frac{C_v}{C_v+1}P{\rm div}\bm{u}+\frac{1}{C_v}{\rm div}\bm{q}=0,\\
	&\bm{u}_t+(\bm{u}\cdot\nabla)\bm{u}+\frac{\nabla P}{\rho}=0,\\
	&s_t+(\bm{u}\cdot\nabla)s+\frac{{\rm div}\bm{q}}{P}=0,\\
	&-\nabla{\rm div}\bm{q}+\bm{q}+\frac{4\theta^3}{C_v+1}\left(\frac{\nabla P}{\rho}+\theta\nabla s\right)=0.
	\end{aligned}	
	\right.
	\end{equation}
	Here to deduce the last equation in \eqref{cns2}, we have used the fact that
	\begin{align*}
	-\nabla{\rm div}\bm{q}+\bm{q}+4\theta^3\nabla\theta
	&=-\nabla{\rm div}\bm{q}+\bm{q}+4\theta^3\left(\frac{1}{C_v+1}\nabla\theta+\frac{C_v}{C_v+1}\nabla\theta\right)\\
	&=-\nabla{\rm div}\bm{q}+\bm{q}+4\theta^3\left\{\frac{1}{C_v+1}\nabla\theta+\frac{C_v}{C_v+1}\nabla\left[A\rho^{\frac{1}{C_v}}{\rm exp}\left(\frac{s}{C_v}\right)\right]\right\}\\
	&=-\nabla{\rm div}\bm{q}+\bm{q}+4\theta^3\left(\frac{1}{C_v+1}\nabla\theta+\frac{1}{C_v+1}\frac{\theta}{\rho}\nabla\rho+\theta\nabla s\right)\\
	&=-\nabla{\rm div}\bm{q}+\bm{q}+\frac{4\theta^3}{C_v+1}\left(\frac{\nabla P}{\rho}+\theta\nabla s\right)=0.
	\end{align*}
	
	We consider the system \eqref{cns2} in a bounded concentric annular domain $\Omega=\{\bm{x}\ \big|\ \bm{x}\in\mathbb{R}^3,\ |\bm{x}|=r,\ 0<a<r<b<+\infty\}$ with prescribed initial data
	\begin{align} \label{initial1}
	\big(P(0,\bm{x}),\bm{u}(0,\bm{x}), s(0,\bm{x})\big)
	=(P_0(\bm{x}),\bm{u}_0(\bm{x}),s_0(\bm{x})) \quad {\rm for}\  \ \bm{x}\in\Omega
	\end{align}
	and boundary conditions
	\begin{equation}
	\label{bdy1}
	\bm{u}(t,\bm{x})=0,\quad\bm{q}(t,\bm{x})=0\quad {\rm on}\  (t,\bm{x})\in[0,+\infty)\times \partial\Omega.
	\end{equation}
	
	Now we study the spherically symmetric classical solution to the initial boundary problem \eqref{cns2}--\eqref{bdy1} with the form that
	\begin{align*}
	&P(t,\bm{x})=\rho(t,\bm{x})\theta(t,\bm{x})=\tilde{\rho}(t,r)\tilde{\theta}(t,r)=\tilde{P}(t,r),\\
	&\bm{u}(t,\bm{x})=\frac{\tilde{u}(t,r)}{r}\bm{x},\  s(t,\bm{x})=\tilde{s}(t,r),\  \bm{q}(t,\bm{x})=\frac{\tilde{q}(t,r)}{r}\bm{x}.
	\end{align*}
	The corresponding spherical form of the initial boundary value problem \eqref{cns2}--\eqref{bdy1} reads
	\begin{equation}
	\label{cns3}
	\left\{	
	\begin{aligned}
	&\tilde{P}_t+\tilde{u}\tilde{P}_r+\frac{C_v+1}{C_v}\tilde{P}\frac{(r^2\tilde{u})_r}{r^2}+\frac{1}{C_v}\frac{(r^2\tilde{q})_r}{r^2}=0,\\
	&\tilde{u}_t+\tilde{u}\tilde{u}_r+\frac{\nabla \tilde{P}}{\tilde{\rho}}=0,\\
	&\tilde{s}_t+\tilde{u}\tilde{s}_r+\frac{(r^2\tilde{q})_r}{r^2\tilde{P}}=0,\\
	&-\left[\frac{(r^2\tilde{q})_r}{r^2}\right]_r+\tilde{q} +\frac{4\tilde{\theta}^3}{C_v+1}\left(\frac{\tilde{P}_r}{\tilde{\rho}}+\tilde{\theta}\tilde{s}_r\right)=0
	\end{aligned}	
	\right.
	\end{equation}
	with the corresponding initial and boundary conditions
	\begin{alignat}{2}
	\label{initial2}
	\big(\tilde{P}(0,r),\tilde{u}(0,r),\tilde{s}(0,r)\big)&=
	(\tilde{P}_0,\tilde{u}_0,\tilde{s}_0)(r), \qquad && a\leq r\leq b,\\[1mm]
	\label{bdy2}
	(\tilde{u}(t,a),\tilde{q}(t,a))&=(\tilde{u}(t,b),\tilde{q}(t,b))=0,\qquad && t\geq 0.
	\end{alignat}
	
	Now we interpret the initial boundary value problem \eqref{cns2}--\eqref{bdy2} into Lagrangian coordinates.
	We now use $\eqref{cns1}_1$ to find another set of variables $(t',x)$ such that
	\begin{equation*}
	\left\{
	\begin{aligned}
	r&=r(t',x)\\
	t&=t'
	\end{aligned}
	\right.
	\end{equation*}
	and $x$ is not dependent on $t$, which is called Lagrangian coordinate. Define coordinate transformation that
	\begin{equation}\label{coordinate}
	\left\{
	\begin{aligned}
	&r(t',x)=r_0(x)+\int_0^{t'} \tilde{u}(\tau,r(\tau,x))\mathrm{d}\tau,\\
	&t=t',
	\end{aligned}
	\right.
	\end{equation}
	where $r_0(x)$ is defined by following way: First, define the function $h(z)$ as
	\begin{align*}
	h(z):=\int_{a}^{z}y^2\tilde{\rho}_0(t,y)\ \mathrm{d}y.
	\end{align*}
	Since that $\tilde{\rho}_0(y)>0$ (which is assumed in Theorem \ref{thm}), function $h(z)$ is invertible and $h^{-1}(\cdot)$ is well defined. Then, $r_0(x)$ is defined as
	\begin{align}\label{r0}
	r_0(x)=h^{-1}(x).
	\end{align}
	
	We claim that the coordinate $(t',x)$ is Lagrangian coordinate.
	The first thing we want to show is that $x$ is truly not dependent on $t$. So we now prove that $\partial_tx=0$ and
	\begin{align}\label{r}
	x=\int_{a}^{r(t',x)}y^2\tilde{\rho}(t,y)\ \mathrm{d}y=\int_{a}^{r(t,x)}y^2\tilde{\rho}(t,y)\ \mathrm{d}y.
	\end{align}
	From $\eqref{cns1}_1$ and \eqref{coordinate}, we can see that
	\begin{align}\notag
	\partial_t\int_{a}^{r(t,x)}y^2\tilde{\rho}(t,y)\ \mathrm{d}y
	&=r^2\tilde{\rho}(t,r)\partial_tr-\int_{a}^{r(t,x)}y^2\tilde{\rho}_t(t,y)\ \mathrm{d}y\\\notag
	&=r^2\tilde{\rho}(t,x)\partial_tr-\int_{a}^{r(t,x)}\mathrm{d}\left[y^2\tilde{\rho} \tilde{u}(t,y)\right]\\[2.5mm]
	&=r^2\tilde{\rho}\left(\partial_tr-\tilde{u}\right)=0.\label{xt}
	\end{align}
	The second equality holds because after spherically symmetric transform, the equation of $\tilde{\rho}(t,r)$ takes the form that
	\begin{align}\label{mass-conservation}
	\tilde\rho_t+\frac{(r^2 \tilde\rho \tilde{u})_r}{r^2}=0.
	\end{align}
	We integrate \eqref{xt} over $(0,t)$ to get that
	\begin{align*}
	\int_{a}^{r(t,x)}y^2\tilde{\rho}(t,y)\ \mathrm{d}y=\int_{a}^{r_0(x)}y^2\tilde{\rho}_0(y)\ \mathrm{d}y=h(r_0)=x.
	\end{align*}
	Without loss of generality, we set $h(b)=1$, so that $x\in[0,1]$. Since $t'=t$, without confusion, we still use $t$ to denote the time variable and we have
	\begin{align*}
	&\tilde{P}(t,r)=\tilde{\rho}(t,r)\tilde{\theta}(t,r)=\rho(t,x)\theta(t,x)=P(t,x),\ \tilde{u}(t,r)=u(t,x),\ \tilde{s}(t,r)=s(t,x),\  \tilde{q}(t,r)=q(t,x).
	\end{align*}
	The next thing we need to know is that whether this coordinate transformation is well-posed. From the a priori estimates which we are going to derive later , it can be seen that $\rho>0$, so the coordinate transformation is well-posed and identities \eqref{r}, \eqref{coordinate} imply
	\begin{align} \label{r_eq}
	r_t(t,x)=u(t,x),\quad
	r_x(t,x)=\frac{1}{r^2\rho(t,x)}.
	\end{align}
	
	By virtue of \eqref{r_eq},  the system \eqref{cns3} is reformulated to that of $(P,u,s,q)(t,x)$, which takes the form
	\begin{equation}
	\label{cns4}
	\left\{	
	\begin{aligned}
	&P_t+\frac{C_v+1}{C_v}P\rho(r^2u)_x+\frac{1}{C_v}\rho(r^2q)_x=0,\\
	&u_t+r^2P_x=0,\\
	&s_t+\frac{(r^2q)_x}{\theta}=0,\\
	&-r^2\rho\left[\rho(r^2q)_x\right]_x+q+\frac{4r^2\theta^3}{C_v+1}\left( P_x+Ps_x\right)=0,
	\end{aligned}	
	\right.
	\end{equation}
	where $t>0$, $x\in \mathbb{I}:=(0,1)$.
	The corresponding initial and boundary conditions are
	\begin{alignat}{2} \label{initial0}
	(P(0,x),u(0,x),s(0,x))&=
	(P_0(x),u_0(x),s_0(x)),
	\qquad && x\in\mathbb{I},\\[1mm]               \label{bdy}
	(u(t,0),q(t,0))&=(u(t,1),q(t,1))=0,\qquad&& t\geq 0,
	\end{alignat}
	where $(P_0,u_0,s_0):=
	(\tilde{P}_0,\tilde{u}_0,\tilde{s}_0)\circ r_0,$ the symbol $\circ$ denotes composition, and $r_0$ is defined by \eqref{r0}.
	
	To state our main result, we first introduce some notations.
	
Firstly, the initial data $\left(\partial_t^kP(0,x)\right.$,$\partial_t^ku(0,x)$,$\partial_t^ks(0,x)$,$\left.\partial_t^{k-1}q(0,x)\right)$ \ep{for any integer $k\geq 1$} 
are defined inductively by following procedure: first apply $\partial_t^{k-1}$ \ep{$k=1,2,\cdots$} on system \eqref{cns4}; second obtain $\partial_t^{k-1}q(0,x)$ by letting $t=0$ in the last elliptic equation and solving it; Third obtain $\left(\partial_t^kP(0,x),\partial_t^ku(0,x),\partial_t^ks(0,x)\right)$ by solving for $\left(\partial_t^kP(t,x),\partial_t^ku(t,x),\partial_t^ks(t,x)\right)$ in the first three equations and evaluating at $t=0$.

	In order to clarify the space of solutions, we introduce the space
	\begin{align*}
	X_m\left([0,T];\mathbb{I}\right):=\bigcap_{k=0}^{m}C^k\left([0,T];H^{m-k}\left(\mathbb{I}\right)\right)\quad\left(\ \text{for some positive integer } m\ \right)
	\end{align*}
	with norm
	\begin{align*}
	\vertiii{f}_{m,T}=\sup_{0\leq t\leq T} \vertiii{f(t)}_m=\sup_{0\leq t\leq T}\sum_{k=0}^{m}\left\|\partial_t^{k}f(t)\right\|_{H^{m-k}(\mathbb{I})}.
	\end{align*}
	
	According to the above definitions, the value $\vertiii{\left(P,u,s,q,q_x\right)(0)}_m$ is well-defined. Furthermore, if we assume $\|(P_0-1,u_0,s_0-1)\|_{H^m}$ is sufficiently small, it holds that
	\begin{align}\label{XmHm}
	\vertiii{\left(P,u,s,q,q_x\right)(0)}^2_m\leq C_1\left\{\|(P_0-1,u_0,s_0-1)\|^2_{H^m}\right\}\|(P_0-1,u_0,s_0-1)\|^2_{H^m}.
	\end{align}
	Here and in the rest of this paper, we will frequently use $C\{\cdot\}$, $C_i\{\cdot\}$, $G_j\{\cdot\}$ to denote some positive constants which are continuous nondecreasing functions of the quantities listed in braces.
	
	Especially, $\interleave \cdot\interleave_{m,tan}$ is the norm when derivatives with respect to $x$ are not included. That is,
	\begin{align*}
	\vertiii{f(t)}_{m,tan}=\sum_{k=0}^{m}\left\|\partial_t^{k}f(t)\right\|_{L^2 (\mathbb{I})}.
	\end{align*}
	Our ultimate goal is to prove the following theorem concerning the global existence of classical solution to the initial boundary problem \eqref{cns4}--\eqref{bdy} when the initial data is a small perturbation of a constant state which, for simplicity, is taken as (1,0,1). (The corresponding equilibrium state for $(\rho,u,\theta)$ is $(c_\rho,0,c_\theta)$. We can see from \eqref{transform} that $c_\rho$ and $c_\theta$ are some positive constants.)
	\begin{theorem}\label{thm}
		If there exists a small enough constant $\epsilon_0>0$ such that the initial data satisfies
		\begin{equation}\label{initialdata}
		\|(P_0-1,u_0,s_0-1)\|^2_{H^m}\leq\epsilon_0\qquad(m\geq2)
		\end{equation}
		with the corresponding compatibility conditions
		\begin{align}\label{compatible}
		\partial_t^ku(0,x)=0\ \text{on}\ \partial\mathbb{I}\qquad (0\leq k\leq m)
		\end{align}
		Then the initial boundary problem \eqref{cns4}--\eqref{bdy} admits a unique global-in-time solution satisfying
		\begin{eqnarray*}
			(P(t,x)-1,u(t,x),s(t,x)-1,q(t,x),q_x(t,x))\in X_m\left([0,\infty);\mathbb{I}\right).
		\end{eqnarray*}
		Moreover, there exist constants $C_0$ (independent of $t$) such that for any $t>0$,
		\begin{multline*}
		\vertiii{(P-1,u,s-1,q,q_x)(t)}^2_m+\int_{0}^{t}\vertiii{(DP,Ds)(s)}^2_{m-1}+\vertiii{(u,q,q_x)(s)}^2_m{\rm d}s\\[-2mm]\leq C_0\vertiii{\left(P-1,u,s-1\right)(0)}_m^2\ \
		\mathrel{\overset{\makebox[0pt]{\mbox{\scriptsize\sffamily \eqref{XmHm}}}}{\leq}}\ \
		C_0C_1\|(P_0-1,u_0,s_0-1)\|^2_{H^m}.
		\end{multline*}
		Here $DP=(P_t,P_x), Ds=(s_t,s_x)$ and the constant $C_1$ is defined in \eqref{XmHm}.
	\end{theorem}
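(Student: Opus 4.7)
The theorem should follow by the standard combination of (i) a short-time existence result in $X_m$, (ii) uniform-in-time a priori estimates under a smallness bootstrap, and (iii) a continuation argument. The compatibility conditions \eqref{compatible} together with the inductive construction sketched just before \eqref{XmHm} make $\vertiii{(P,u,s,q,q_x)(0)}_m$ meaningful and controlled by $\|(P_0-1,u_0,s_0-1)\|_{H^m}$. Local existence in $X_m([0,T^*];\mathbb{I})$ for some $T^*>0$ can be obtained by a Picard-type iteration adapted to the hyperbolic-elliptic coupling of \eqref{cns4}: at each step one solves a linearized symmetric hyperbolic system for $(P,u,s)$ with sources frozen at the previous iterate, and then solves the two-point elliptic boundary value problem $\eqref{cns4}_4$ for $q$ with $q|_{\partial\mathbb{I}}=0$.

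\textbf{Tangential energy and elliptic dissipation.} Under the bootstrap $\sup_{[0,T]}\vertiii{(P-1,u,s-1,q,q_x)(t)}_m^2\le\delta$ with $\delta$ small, I would apply $\partial_t^k$ for $0\le k\le m$ to the first three equations of \eqref{cns4}: the symmetrizer read off from the ideal-gas structure (essentially multiplication by $1/P$, $\rho$, $P$ respectively) turns these into a good $L^2$-energy identity for $(\partial_t^k P,\partial_t^k u,\partial_t^k s)$. Since $\partial_t$ is tangential to $\partial\mathbb{I}$ and the compatibility conditions \eqref{compatible} hold, integration by parts produces no uncontrolled boundary terms. Testing the $\partial_t^k$-differentiated elliptic equation against a $\rho$- and $r^2$-weighted version of $\partial_t^k q$ converts the hyperbolic-elliptic cross coupling into a coercive dissipation $\|\partial_t^k q\|_{L^2}^2+\|\partial_t^k(r^2q)_x\|_{L^2}^2$, modulo nonlinear errors absorbable by Sobolev embedding and the bootstrap hypothesis.

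\textbf{Trading spatial derivatives and recovering $DP,Ds$ dissipation.} The norm $\vertiii{\cdot}_m$ and the integrated dissipation on $(DP,Ds)$ at order $m-1$ are recovered from the tangential estimates by using the equations to swap $\partial_x$ for $\partial_t$ modulo lower-order terms: $\eqref{cns4}_2$ gives $r^2 P_x=-u_t$; $\eqref{cns4}_3$ gives $\theta s_t=-(r^2q)_x$; $\eqref{cns4}_1$ solves $P_t$ in terms of $(r^2u)_x$ and $(r^2q)_x$; and $\eqref{cns4}_4$ expresses the combination $P_x+Ps_x$ in terms of $q$ and its spatial derivatives. Iterating this hierarchy, every mixed derivative $\partial_x^j\partial_t^k$ with $j+k\le m$ is bounded in $L^2$ by purely tangential norms of total order $\le m$; moreover the elliptic dissipation on $q,q_x$ transfers, through these identities combined with the momentum relation $P_x=-u_t/r^2$, to the required integrated dissipation on $(DP,Ds)$ at order $m-1$ and on $(u,q,q_x)$ at order $m$, precisely as in the claimed inequality.

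\textbf{Main obstacle and conclusion.} The crucial difficulty will be exactly the last of these steps: squeezing out a joint dissipation on the whole vector $(DP,Ds,u,q,q_x)$ from the elliptic coercivity on $q$ alone, and taming the commutators produced when $\partial_t^k$ hits the variable coefficients $r^2,\rho,\theta$ introduced by the radial Lagrangian transformation \eqref{coordinate}--\eqref{r_eq}. The annular assumption $0<a<r<b$ together with the bootstrap keeps $r,\rho,\theta$ uniformly bounded above and away from zero, so the symmetrizer remains positive definite and the elliptic operator in $\eqref{cns4}_4$ remains uniformly coercive on $[0,T]$. Once the closed a priori inequality is in hand, a standard continuation argument extends $T^*$ to $+\infty$ and, via \eqref{XmHm}, yields the stated estimate.
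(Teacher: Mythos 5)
Your overall architecture matches the paper's: local existence by Picard iteration on a linearized hyperbolic--elliptic system, a priori estimates under a smallness bootstrap, and a continuation argument. The observation that $\partial_t$ is tangential and that the commutators with $r^2,\rho,\theta$ (arising from the radial Lagrangian change of variables, with $r_x=1/(r^2\rho)$ \emph{not} small) are the central difficulty is also right, and the paper addresses exactly this by using multipliers of the form $r^4P_x$, $\rho(P+1)(r^2u)_x$, $(P+1)r^4s_x$ and by treating $r^2u$ and $r^2q$ as single blocks (see the paper's Remark 2.12).

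The genuine gap is in the dissipation mechanism. You propose to obtain dissipation by pairing the $\partial_t^k$-differentiated elliptic equation with a weighted $\partial_t^k q$, yielding $\int_0^t\bigl(\|\partial_t^k q\|^2+\|\partial_t^k(r^2q)_x\|^2\bigr)$, and then to ``transfer'' this to $(DP,Ds)$ by using the equations to trade derivatives. This works for $s_t$ (via $\eqref{cns4}_3$: $s_t=-(r^2q)_x/\theta$), but it does \emph{not} close for $P_x$. The momentum relation $P_x=-u_t/r^2$ is circular here: the tangential energy identity gives $\sup_t\|\partial_t^k(P,u,s)\|^2$, not $\int_0^t\|u_t\|^2$, so no integrated dissipation on $u_t$ is available this way. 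Trying instead to use the elliptic equation to express $P_x+Ps_x$ in terms of $q$-quantities brings in $(r^2q)_{xx}$, which is \emph{not} controlled by the $q$-paired dissipation (and the obvious $(r^2q)_{xx}$-pairing is again circular, since it returns $\|P_x\|+\|s_x\|$ on the right). What the paper actually does at first order and above is different in an essential way: it multiplies the elliptic equation by $(r^4P_x+r^4s_x)/[(C_v+1)r^2\rho]$ (see \eqref{Pusqx}), so that the term $\frac{4r^2\theta^3}{C_v+1}(P_x+s_x)$ produces the sign-definite quadratic $\int_0^t\bigl\|r^2P_x+r^2s_x\bigr\|^2$ directly (display \eqref{10}); it then uses two Kawashima-type compensating estimates, $(\eqref{cns6}_1+\eqref{cns6}_3)\times(r^2u)_x$ (display \eqref{11}) and $\eqref{cns5}_2\times P_x$ (display \eqref{12}), combined with carefully ordered small weights $\epsilon\ll\delta_1\ll\delta_2\ll\delta_3\ll1$, to extract separate dissipation of $(r^2u)_x$ and $P_x$. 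The $q$-paired estimate you describe is only used at the $L^2$ level (Lemma \ref{m=0}). Without the $(P_x+s_x)$-paired multiplier and the compensating-function step, the inequality you aim for does not close.

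A secondary point, worth flagging but not fatal to the plan: in the local existence part the paper has to construct a \emph{starting iterate} whose time traces match $\partial_t^k(P,u,s)(0,x)$ to order $m$ on the boundary, which requires solving an auxiliary linear system (Lemma \ref{lemma1}) because the naive choices (the initial data or the equilibrium) fail the compatibility conditions of the iterates; and $\bar r$ in the linearization must be defined by \eqref{barrdefinition} rather than by \eqref{r_eq}, since the latter relies on the nonlinear mass equation. Your sketch elides both of these, which is fine for a plan, but the dissipation gap above would need to be fixed for the argument to go through.
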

	
	\begin{remark} Some remarks are listed below:
		\begin{itemize}
			\item In our main result, we ask the initial data is a small perturbation near the equilibrium state where $(P,u,s)=(1,0,1)$. For the corresponding global solvability results with large initial data, it is shown in \cite{MR4419016} for its one-dimensional Cauchy problem that its solution will develop singularity for a class of state equations and a class of large initial data even the thermal conductivity is taken into account, while the result obtained in \cite{MR4661717} shows that, for the initial-boundary value problem of the radiative Euler equations in a one-dimensional periodic box, if both viscosity and thermal conductivity are introduced, such an initial-boundary value problem does exists a unique global smooth solution for any large initial data.
			
			\item Although global smooth solutions are construceted in this paper, its large time behavior is not clear. Moreover, it would be interesting to consider the exterior problem, i.e., the case with $b=+\infty$. Such problems are under our current research.
			
		\end{itemize}
		
	\end{remark}

We can see it from the a priori estimates in Section \ref{a priori estimates} that the main difficulty to obtain the global solvability results from the high-dimensional space variable $r$, because the spatial derivative of $r$ \ep{referring to $\eqref{r_eq}_2$} is not small even with small initial data. Furthermore, we can see $r$ comes from the Lagrangian coordinate transformation, which gives us a beautiful structure of system \eqref{cns4}. However, $r$ as a function of  $\rho$ and $u$, its definition closely  related to the nonlinear structure of mass-conservation equation \eqref{mass-conservation}. This dependency will cause troubles when we try to obtain the local solution of linearized system. \ep{One can refer to the comments under \eqref{barrdefinition}, \eqref{barq} and \eqref{cns11}.}

As usual, the main proof of Theorem \ref{thm} can be divided into two parts, the a priori estimates and the local existence result, which are established respectively in Section \ref{a priori estimates}  and \ref{local}. If these two results stand, then with the method of continuity \ep{see for an example that \cite[p.~5972-5973]{MR3624545}}, Theorem \ref{thm} holds.

\section{A Priori Estimates}\label{a priori estimates}
 To simplify the a priori estimates, we reformulate our system around the equilibrium state by taking change of variables as that $(P,u,s,q)\rightarrow(P+1,u,s+1,q)$ and the system \eqref{cns4} is reformulated as
\begin{equation}
\label{cns5}
\left\{	
\begin{aligned}
&P_t+\frac{C_v+1}{C_v}(P+1)\rho(r^2u)_x+\frac{1}{C_v}\rho(r^2q)_x=0,\\[2mm]
&(r^2u)_t+r^4P_x=2ru^2,\\[2mm]
&s_t+\frac{(r^2q)_x}{\theta}=0,\\
&q-r^2\rho^2(r^2q)_{xx}+\frac{4r^2\theta^3}{C_v+1}\left(P_x+s_x\right)=\frac 1 2r^2(\rho^2)_x(r^2q)_x-\frac{4r^2\theta^3}{C_v+1}Ps_x,
\end{aligned}	
\right.
\end{equation}
as well as the linearized form that
\begin{equation}
\label{cns6}
\left\{	
\begin{aligned}
&P_t+\frac{C_v+1}{C_v}c_\rho(r^2u)_x+\frac{1}{C_v}(r^2q)_x=\mathbb{S}_1,\\
&u_t+r^2P_x=0,\\[2mm]
&s_t+\frac{1}{c_\theta}(r^2q)_x=\mathbb{S}_3,\\
&q-r^2c_\rho^2(r^2q)_{xx}+\frac{4r^2c_\theta^3}{C_v+1}\left(P_x+s_x\right)=\mathbb{S}_4,
\end{aligned}	
\right.
\end{equation}
where
\begin{eqnarray*}
  &&\mathbb{S}_1=\frac{C_v+1}{C_v}(c_\rho-\rho)(r^2u)_x+\frac{C_v+1}{C_v}\rho P(r^2u)_x+\frac{1}{C_v}(1-\rho)(r^2q)_x,\\
  &&\mathbb{S}_3=\left(\frac{1}{c_\theta}-\frac{1}{\theta}\right)(r^2q)_x,\\
  &&\mathbb{S}_4=r^2(\rho^2-c_\rho^2)(r^2q)_{xx}+\frac{4r^2}{C_v+1}(c_\theta^3-\theta^3)\left(P_x+s_x\right)+\frac 1 2r^2(\rho^2)_x(r^2q)_x-\frac{4r^2\theta^3}{C_v+1}Ps_x.
\end{eqnarray*}
The corresponding initial and boundary conditions are given by that
\begin{alignat}{2} \label{initial}
(P(0,x),u(0,x),s(0,x))&=
(P_0-1,u_0,s_0-1)(x),
\qquad && x\in\mathbb{I},\\[1mm]               \label{bdry}
(u(t,0),q(t,0))&=(u(t,1),q(t,1))=0,\qquad&& t\geq 0,
\end{alignat}
For later use, we list some Sobolev inequalities refined with respect to norm $\interleave \cdot \interleave_m$ as follows
\begin{lemma}[refined version of Appendix B in \cite{MR834481}]\label{inequality}
	Let $f,g\in X_{k}([0,\infty),\mathbb{I})$. Then
	\begin{itemize}
\begin{spacing}{1.5}
	\item[$(1)$]$\vertiii{fg}_1\lesssim\vertiii{f}_{1}\vertiii{g}_{1}$, with $k=1$; $\vertiii{fg}_k\lesssim \vertiii{f}_{k}\vertiii{g}_{k-1}+\vertiii{f}_{k}\vertiii{g}_{k-1}$,     with $k\geq2$; \ep{From now on, that $X\lesssim Y$ means $X\leq CY$ for some trivial constant $C$.}
\end{spacing}
	\vspace{0pt}
		\begin{spacing}{1.5}
				\item[$(2)$]$\|D^\alpha f D^\beta g\|\lesssim\vertiii{f}_{s_1}^{a_1}\vertiii{f}_{s_1-1}^{1-a_1}\vertiii{g}_{s_2}^{a_2}\vertiii{g}_{s_2-1}^{1-a_2}$, with $0<a_1,a_2<1$, provided $|\alpha|\leq s_1-1,\ |\beta|\leq s_2-1,\ |\alpha|+|\beta|+1\leq
				s_1+s_2,\ 1\leq s_1,s_2\leq k$;
		\end{spacing}
	\vspace{10pt}
		\item[$(3)$]$F(f,g)$ is a $C^k\ (k\geq2)$ function with respect to $(f,g)$, then
		\begin{align*}
			\vertiii{F}_k&\leq\sum_{|\gamma|=0}^k\|D^\gamma(F)\|
			\leq\sum_{|\gamma|=0}^k\sum\left\|\left(\partial^i_f\partial^j_gF\right)D^{\alpha^1}f\cdots D^{\alpha^i}fD^{\beta^1}g\cdots D^{\beta^j}g\right\|\\
			&\leq\left|F\right|_{L^\infty}C\{\mathbb{I}\}+\sum_{|\gamma|=1}^k\sum\left|\partial^i_f\partial^j_gF\right|_{L^\infty}
			\left\|D^{\gamma}\left(f^ig^j\right)\right\|\\
			&\lesssim\sum_{|\gamma|=0}^k\sum
			\left|\partial^i_f\partial^j_gF\right|_{L^\infty}
			\left(1+\vertiii{f}_{k-1}^{i-1}\vertiii{g}_{k-1}^{j}\vertiii{f}_{k}+\vertiii{f}_{k-1}^{i}\vertiii{g}_{k-1}^{j-1}\vertiii{g}_{k}\right)\\
			&\leq C\left\{\mathbb{I},\left\|F\right\|_{C^k}\right\}\left[\left(1+\vertiii{f}_{k-1}\right)^{k-1}\left(1+\vertiii{g}_{k-1}\right)^{k-1}\left(1+\vertiii{f}_{k}\right)\right.\\
			&\qquad\qquad\qquad\qquad\qquad\quad\left.+\left(1+\vertiii{f}_{k-1}\right)^{k-1}\left(1+\vertiii{g}_{k-1}\right)^{k-1}\left(1+\vertiii{g}_{k}\right)\right].
		\end{align*}
\begin{spacing}{1}
			Here $\alpha^1,\cdots,\alpha^i,\cdots,\beta^1,\cdots,\beta^j,\ \gamma$ are two-dimensional multi-index $($For an instance, Let $\alpha^i=\left(\alpha^i_1,\alpha^i_2\right)$, then $D^\alpha=\partial_t^{\alpha^i_1}\partial_x^{\alpha^i_2}$.$)$; $0\leq i+j \leq |\gamma|;\ |\alpha^1|+\cdots+|\alpha^i|+|\beta^1|+\cdots+|\beta^j|=|\gamma|$; $C\left\{\mathbb{I},\left\|F\right\|_{C^k}\right\}$ is a positive nondecreasing constant with respect to $\mathbb{I}$ and $\left\|F\right\|_{C^k}$.
\end{spacing}
	\end{itemize}
\end{lemma}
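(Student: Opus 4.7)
The plan is to reduce every estimate in the mixed space–time norm $\vertiii{\cdot}_k$ to standard one–dimensional Sobolev embeddings on $\mathbb{I}=(0,1)$, where $H^1(\mathbb{I})\hookrightarrow L^\infty(\mathbb{I})$ with a universal constant. Unwinding the definition, $\vertiii{f(t)}_m^2$ is equivalent to $\sum_{|\alpha|\leq m}\|\partial^\alpha f(t,\cdot)\|_{L^2(\mathbb{I})}^2$ taken over two–dimensional multi-indices $\alpha=(\alpha_1,\alpha_2)$ with $\partial^\alpha=\partial_t^{\alpha_1}\partial_x^{\alpha_2}$, so $\vertiii{\cdot}_m$ is structurally a classical mixed Sobolev norm while the time variable remains pointwise. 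I will work with this equivalent form throughout.

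For part (1), I apply the Leibniz rule $\partial^\alpha(fg)=\sum_{\beta\leq\alpha}\binom{\alpha}{\beta}\partial^\beta f\,\partial^{\alpha-\beta}g$ for each multi-index $\alpha$ with $|\alpha|\leq k$, and estimate each product in $L^2(\mathbb{I})$ by Hölder. When both $|\beta|\leq k-1$ and $|\alpha-\beta|\leq k-1$, the factor carrying fewer derivatives is placed in $L^\infty_x$ via $H^1_x\hookrightarrow L^\infty_x$, which costs one additional spatial derivative and is absorbed into the $\vertiii{\cdot}_k$ norm of that factor while the other factor stays in $\vertiii{\cdot}_{k-1}$. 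The extreme case $|\beta|=k$, $|\alpha-\beta|=0$ (or the symmetric one) is handled by placing the undifferentiated factor directly in $L^\infty_x\lesssim\vertiii{\cdot}_1\leq\vertiii{\cdot}_{k-1}$ when $k\geq2$. The case $k=1$ follows from the same argument and yields the symmetric bound $\vertiii{fg}_1\lesssim\vertiii{f}_1\vertiii{g}_1$.

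For part (2), the interpolation estimate is a space–time Gagliardo–Nirenberg inequality. Fixing $t$ and invoking the classical one–dimensional Gagliardo–Nirenberg inequality gives
$$\|\partial^\alpha f(t,\cdot)\|_{L^{p_1}(\mathbb{I})}\lesssim \|f(t,\cdot)\|_{H^{s_1}_x}^{a_1}\|f(t,\cdot)\|_{H^{s_1-1}_x}^{1-a_1}$$
for a suitable $p_1$ determined by $|\alpha|$ and $s_1$, and analogously for $\partial^\beta g$ with exponent $p_2$ satisfying $1/p_1+1/p_2=1/2$. The counting condition $|\alpha|+|\beta|+1\leq s_1+s_2$ is precisely what is needed to close the Hölder inequality in $L^2_x$. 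Replacing the purely spatial norms by the mixed norms $\vertiii{\cdot}_{s_j}$ is then immediate, because every time derivative appearing in $\partial^\alpha$ or $\partial^\beta$ is already incorporated in the definition of $\vertiii{\cdot}_{s_j}$.

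For part (3), I expand $\partial^\gamma F(f,g)$ by the Fa\`a di Bruno chain-rule formula as a finite sum of terms of the form $(\partial_f^i\partial_g^j F)\,\partial^{\alpha^1}f\cdots\partial^{\alpha^i}f\,\partial^{\beta^1}g\cdots\partial^{\beta^j}g$ with $\sum|\alpha^l|+\sum|\beta^m|=|\gamma|$ and $1\leq i+j\leq|\gamma|$, and pull $\|\partial_f^i\partial_g^j F\|_{L^\infty}$ (finite because $F\in C^k$) out of each $L^2_x$ norm. Parts (1) and (2) then bound each remaining factor in the mixed norm, arranging the distribution of derivatives so that at most one factor carries the top order (contributing $\vertiii{f}_k$ or $\vertiii{g}_k$) while the remaining factors are controlled by $\vertiii{f}_{k-1}$ and $\vertiii{g}_{k-1}$. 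The main obstacle throughout is bookkeeping the anisotropic budget $\alpha_1\leq m-\alpha_2$ hidden in $\vertiii{\cdot}_m$: since $\partial_t^j f$ only belongs to $H^{m-j}_x$, one must verify at every application of Leibniz or Gagliardo–Nirenberg that the spatial order of each factor, including the single derivative picked up when passing to $L^\infty_x$ via $H^1_x\hookrightarrow L^\infty_x$, stays within this budget.
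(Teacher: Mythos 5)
The paper does not actually prove Lemma \ref{inequality}; it cites Appendix~B of \cite{MR834481}, and the only proof content supplied is the chain of inequalities embedded in the statement of part~(3), which is exactly the Fa\`a di Bruno / pull-out-$L^\infty$-of-$\partial^i_f\partial^j_g F$ / apply-(1)-and-(2) argument that you describe. Your Leibniz-plus-$H^1_x\hookrightarrow L^\infty_x$ argument for (1) and the Gagliardo--Nirenberg interpolation for (2) are the standard proofs of these mixed Moser-type estimates, so the approach is essentially the same as the one the paper relies on.

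One small imprecision worth correcting: the displayed inequality in your treatment of part~(2),
\[
\|\partial^\alpha f(t,\cdot)\|_{L^{p_1}(\mathbb{I})}\lesssim \|f(t,\cdot)\|_{H^{s_1}_x}^{a_1}\|f(t,\cdot)\|_{H^{s_1-1}_x}^{1-a_1},
\]
is not literally true when $\alpha=(\alpha_1,\alpha_2)$ carries time derivatives, since the right-hand side involves only spatial Sobolev norms of $f$ and cannot control $\partial_t^{\alpha_1}$. The correct statement applies the one-dimensional Gagliardo--Nirenberg inequality to the fixed-time function $\partial_t^{\alpha_1}f(t,\cdot)$ and its spatial derivatives, giving
\[
\|\partial_x^{\alpha_2}\partial_t^{\alpha_1}f(t,\cdot)\|_{L^{p_1}(\mathbb{I})}\lesssim \|\partial_t^{\alpha_1}f(t,\cdot)\|_{H^{s_1-\alpha_1}_x}^{a_1}\|\partial_t^{\alpha_1}f(t,\cdot)\|_{H^{s_1-1-\alpha_1}_x}^{1-a_1},
\]
which is then dominated by $\vertiii{f(t)}_{s_1}^{a_1}\vertiii{f(t)}_{s_1-1}^{1-a_1}$ precisely because $|\alpha|\leq s_1-1$ keeps $\alpha_2$ inside the anisotropic budget $s_1-\alpha_1$ you identify at the end. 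Your closing remark shows you are aware of this bookkeeping, but the displayed line as written skips it; otherwise the sketch is sound.
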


We define the solution spaces with parameters $T>0$ and $N>0$ that
\begin{align*}
&A_m\left([0,T];\mathbb{I};N\right)
=\left\{f(t,x)\ \Big|\ (t,x)\in[0,T]\times\mathbb{I},\ \vertiii{f}^2_{m,tan}+\vertiii{f}^2_{m-1}\leq N\right\},\\
&B_m\left([0,T];\mathbb{I};N\right)=\left\{f\in X_m\left([0,T];\mathbb{I}\right)\Big|\ \vertiii{f}_{m}\leq N\right\}.
\end{align*}
The a priori estimates $(P,u,s,q)$ in a certain solution space are derived in this subsection as follows.
\begin{proposition}[A priori estimate]\label{a priori}
	Suppose that the initial data $(P_0-1,u_0,s_0-1)$ satisfy the conditions in Theorem \ref{thm} and the initial value problem \eqref{cns2} has a unique solution $(P,u,s,q)$ satisfying $(P,u,s,q,q_x)\in B_m([0,T];\mathbb{I};\epsilon)$, where m is an integer satisfying $m\geq2$, $T$, $\epsilon$ are some positive constants and $\epsilon$ is small enough. Then there exists a constant $C_0>0$ , which is independent of $T$, such that
    \begin{multline}\label{apriori}
\vertiii{(P,u,s,q,q_x)(t)}^2_m+\int_{0}^{t}\vertiii{(DP,Ds)(s)}^2_{m-1}+\vertiii{(u,q,q_x)(s)}^2_m{\rm d}s\\[-2mm]
\leq C_0\vertiii{\left(P-1,u,s-1\right)(0)}_m^2\ \
\mathrel{\overset{\makebox[0pt]{\mbox{\scriptsize\sffamily \eqref{XmHm}}}}{\leq}}\ \
C_0C_1\|(P_0-1,u_0,s_0-1)\|^2_{H^m}:=V_0.
\end{multline}
\end{proposition}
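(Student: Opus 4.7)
The plan is to establish \eqref{apriori} by a layered energy argument that exploits the symmetric hyperbolic structure of the $(P,u,s)$-subsystem and the coercivity of the elliptic equation for $q$. I first work in the tangential norm $\vertiii{\cdot}_{m,\mathrm{tan}}$, since $\partial_t$ preserves the boundary conditions $u=q=0$ on $\partial\mathbb{I}$ via the compatibility conditions \eqref{compatible}, and then upgrade to the full $\vertiii{\cdot}_m$ norm by using the evolution equations to trade spatial derivatives for temporal ones. Throughout, the a priori smallness $\vertiii{(P,u,s,q,q_x)}_m \leq \epsilon$ together with Lemma \ref{inequality} lets me treat the variable coefficients $\rho(P,s),\theta(P,s)$ and the source terms $\mathbb{S}_1,\mathbb{S}_3,\mathbb{S}_4$ in \eqref{cns6} as perturbations.

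First, I would derive the base ($k=0$) energy identity by multiplying the $P$-equation in \eqref{cns5} by a weight proportional to $P$, the momentum equation (in $(r^2u)_t$ form) by $u/r^2$ so that a clean $\tfrac{1}{2}\tfrac{d}{dt}\|u\|^2$ appears, the entropy equation by $s$, and the elliptic equation by $q$. The boundary terms all vanish thanks to $u(t,\cdot)=q(t,\cdot)=0$ on $\partial\mathbb{I}$, and after choosing the multiplicative constants so that the linear couplings $\int P(r^2u)_x+\int uP_x(r^2)$-type pairs cancel by integration by parts, I arrive at
\[
\tfrac{d}{dt}\mathcal{E}_0(t)+\|q\|_{L^2}^2+\|(r^2q)_x\|_{L^2}^2\;\lesssim\;\epsilon\,\|(P_x,s_x)\|_{L^2}^2+(\text{cubic terms}).
\]
Dissipation on $\|(P_x,s_x)\|$ (and then on $(P_t,s_t)$ via the $P$- and $s$-equations) is then extracted by using the elliptic equation $(r^2c_\theta^3/(C_v+1))(P_x+s_x)=-(q-r^2c_\rho^2(r^2q)_{xx})+\mathbb{S}_4$ to bound $\|P_x+s_x\|$ by $\|q\|+\|(r^2q)_{xx}\|$ plus small remainders, combined with the independent information that the $u$-equation gives $P_x=r^{-4}[2ru^2-(r^2u)_t]$.

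Second, I would apply $\partial_t^k$ to \eqref{cns5} for $1\leq k\leq m$ and repeat the same symmetrized energy argument at each level. The key point is that $\partial_t^k u=0$ and $\partial_t^k q=0$ on $\partial\mathbb{I}$ propagate from \eqref{compatible} through the equations, so boundary integrals still vanish. Commutators between $\partial_t^k$ and the nonlinear coefficients $\rho,\theta,r^2,r^4$, as well as between $\partial_t^k$ and the source terms $\mathbb{S}_i$, are handled by the Moser-type product and composition estimates of Lemma \ref{inequality}, together with $r_t=u$ and $r_x=1/(r^2\rho)$ from \eqref{r_eq}. This yields a closed inequality controlling $\vertiii{(P,u,s)}_{m,\mathrm{tan}}^2+\vertiii{q}_{m,\mathrm{tan}}^2$ plus the dissipation $\int_0^t\bigl(\vertiii{q,q_x}_{m,\mathrm{tan}}^2+\vertiii{DP,Ds}_{m-1,\mathrm{tan}}^2\bigr)ds$.

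Third, I would recover the spatial derivatives and upgrade to the full $\vertiii{\cdot}_m$ norm by an inductive bootstrap on the number of $\partial_x$ derivatives. From the momentum equation, $P_x$ is algebraically expressible in $(r^2u)_t$ and $u^2$, giving $\|P_x\|_{H^{m-1}}$ control from the already-obtained $\|u_t\|_{H^{m-1}}$. From the $P$- and $s$-equations, $(r^2u)_x$ and $(r^2q)_x$ are algebraically expressed in $(P_t,s_t)$ plus nonlinearities, recovering $u_x$ and $q_x$. Finally, the elliptic equation for $q$ rewrites as $(r^2q)_{xx}=r^{-2}\rho^{-2}[q+4r^2\theta^3(P_x+s_x)/(C_v+1)-\mathbb{S}_4]$ and allows $H^{m}$-regularity of $q$ to be bootstrapped from $H^{m-1}$-regularity of $(P,s,q)$ via standard elliptic estimates on the interval $\mathbb{I}$.

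The hard part will be step three: the factor $r^2$ and its spatial derivative $r_x=1/(r^2\rho)$, which is of order one (not small), means that every $\partial_x$ acting on $r^\alpha$ generates nontrivial terms that have to be tracked inside commutators. Because $r$ is itself determined nonlinearly by $\rho$ and $u$ through \eqref{r_eq}, the only safe way to proceed is to exploit the identities $r_t=u$ and $r_x=1/(r^2\rho)$ whenever a derivative falls on $r$, and then apply Lemma \ref{inequality}(3) (composition estimate) to functions $F(r,P,s)$. Once all the nonlinear commutator and source contributions have been bounded, they take the schematic form $\epsilon\cdot(\text{energy}+\text{dissipation})+(\text{higher-order smallness})$, and for $\epsilon_0$ small enough they are absorbed into the dissipation, allowing Gronwall's inequality to close \eqref{apriori} with a constant $C_0$ independent of $T$.
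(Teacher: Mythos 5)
Your broad plan — symmetric-hyperbolic energy estimates plus elliptic coercivity, followed by a bootstrap that recovers spatial derivatives from temporal ones — matches the spirit of the paper, but two of your technical claims would not survive a careful execution, and they are precisely the places where the paper's proof does something nonobvious.

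First, the dissipation on $\|(P_x,s_x)\|$. You propose to ``extract'' it by using the elliptic equation to bound $\|P_x+s_x\|$ by $\|q\|+\|(r^2q)_{xx}\|$, plus $P_x=r^{-4}[2ru^2-(r^2u)_t]$. This is circular: multiplying the elliptic equation by $q$ and by $(r^2q)_{xx}$ gives \eqref{q1}--\eqref{q2}, i.e.\ $\|q\|+\|(r^2q)_{xx}\|\lesssim\|P_x\|+\|s_x\|$, so you end up with $\|P_x+s_x\|\lesssim C\|P_x+s_x\|+\text{small}$ for some constant $C$ that has no reason to be $<1$. The paper does not obtain dissipation from a static bound at all; it feeds the elliptic equation $\eqref{cns5}_4$ multiplied by $\tfrac{1}{(C_v+1)r^2\rho}(r^4P_x+r^4s_x)$ into the same energy balance as $\partial_x\eqref{cns5}_1\times\tfrac{C_v}{C_v+1}r^4P_x$, $\partial_x\eqref{cns5}_2\times\rho(P+1)(r^2u)_x$ and $\partial_x\eqref{cns5}_3\times\tfrac{1}{C_v+1}(P+1)r^4s_x$ (the combination \eqref{Pusqx}), so that the cross-terms between $(r^4P_x)_x$ and $\rho(P+1)(r^2u)_x$ cancel exactly after one integration by parts with vanishing boundary contribution (since $r^4P_x=2ru^2-(r^2u)_t=0$ on $\partial\mathbb{I}$), and what survives on the good side is the genuine quadratic form $\int\tfrac{4\theta^3}{\rho(C_v+1)^2}(r^2P_x+r^2s_x)^2$. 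Without this carefully matched set of multipliers you do not get a sign-definite dissipation, only tautological inequalities.

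Second, your treatment of $r_x=1/(r^2\rho)=O(1)$ understates the difficulty. You write that ``the only safe way to proceed is to exploit the identities $r_t=u$, $r_x=1/(r^2\rho)$\ldots and then apply Lemma \ref{inequality}(3).'' Composition estimates control the size of $\partial_t^k\partial_x^j r$ in terms of lower derivatives, but they cannot make quadratic top-order cross-terms disappear. The paper's remark after Lemma \ref{m>2} explains that if you multiply by $\partial_t^k\partial_x u$ or $\partial_t^k\partial_x P$ naively, terms like $\partial_t^k\partial_x\!\left[\tfrac{2}{r}(P+1)u\right]\partial_t^k\partial_xP$ appear with $O(1)$ coefficients and no sign structure; the cure is to choose the multipliers so that $(r^2u)$, $(r^4P_x)$, $(r^4s_x)$, $(r^2q)$ are never broken apart (see \eqref{Pusqtkx}), so $\partial_x^{k+1}r$ never appears alone and the dangerous cross-terms cancel structurally rather than being estimated away. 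That structural choice, not the composition lemma, is the key idea, and it is missing from your plan. Relatedly, your ``tangential-first'' layering would not close by itself: the $k=0$ identity \eqref{00} already produces an error $\epsilon\|(P_x,s_x,(r^2u)_x)\|^2$ that can only be absorbed after the one-spatial-derivative estimates \eqref{10}--\eqref{12} are available, so the paper necessarily runs the $\partial_t^k$ and $\partial_t^k\partial_x$ estimates together and only afterwards bootstraps the remaining $\partial_t^{k-i}\partial_x^{i+1}$ layers.
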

The proof of Proposition \ref{a priori} is divided into the following three lemmas.
\begin{lemma}\label{m=0}
	Under the assumption of Proposition \ref{a priori}, for any $t\in[0,T]$, it holds that
	\begin{equation}\label{00}
		\frac{\rm d}{{\rm d}t}\|(P,u,s)(t)\|^2+\left\|\left[q,(r^2q)_x(t)\right]\right\|^2\lesssim\epsilon\left\|\left[P_x,s_x,(r^2u)_x\right](t)\right\|^2.
	\end{equation}
\end{lemma}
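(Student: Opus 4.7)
The plan is to perform a simultaneous $L^2$ energy estimate on the hyperbolic block $\eqref{cns5}_1$--$\eqref{cns5}_3$ and an elliptic test of $\eqref{cns5}_4$ against $\lambda q$, with the multipliers chosen so that the resulting $q$-couplings cancel at principal order. Working with the linearized form $\eqref{cns6}$ and treating $\mathbb{S}_1,\mathbb{S}_3,\mathbb{S}_4$ as nonlinear remainders, I would first multiply $\eqref{cns6}_1$ by $\alpha_1 P$, $\eqref{cns6}_2$ by $u$, and $\eqref{cns6}_3$ by $\alpha_3 s$. Integration by parts of the $P$--$u$ coupling $\int\bigl[\alpha_1\tfrac{C_v+1}{C_v}c_\rho P(r^2u)_x+r^2 P_x u\bigr]dx$ reduces to the boundary term $\bigl[\alpha_1\tfrac{C_v+1}{C_v}c_\rho\,Pr^2u\bigr]_0^1$, which vanishes by $u|_{\partial\mathbb I}=0$ provided $\alpha_1=\tfrac{C_v}{(C_v+1)c_\rho}$. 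After integrating by parts the remaining hyperbolic $q$-coupling (using $q|_{\partial\mathbb I}=0$), one is left with $\tfrac12\tfrac{d}{dt}(\alpha_1\|P\|^2+\|u\|^2+\alpha_3\|s\|^2)-\int r^2 q\bigl[\tfrac{P_x}{(C_v+1)c_\rho}+\tfrac{\alpha_3}{c_\theta}s_x\bigr]dx+(\text{nonlinear})$.

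For the elliptic side, I would multiply $\eqref{cns6}_4$ by $\lambda q$ and integrate, applying integration by parts on $-\lambda\!\int r^2c_\rho^2 q(r^2q)_{xx}\,dx$. Here the clean structure of the Lagrangian formulation is crucial: using $r_x=1/(r^2\rho)$, the curvature corrections arising from $(r^2)_x q\cdot(r^2q)_x$ and from rewriting $r^2 q_x$ as $(r^2q)_x-\tfrac{2q}{r\rho}$ cancel exactly, leaving the pure dissipation $\lambda\|q\|^2+\lambda c_\rho^2\|(r^2q)_x\|^2$ together with the coupling $\lambda\!\int\tfrac{4r^2 c_\theta^3}{C_v+1}q(P_x+s_x)\,dx$ and the remainder $\lambda\!\int q\mathbb S_4\,dx$. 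Adding the two identities, I then match coefficients of $P_x$ and $s_x$ to annihilate the cross terms, which forces the unique choices $\lambda=\tfrac{1}{4c_\rho c_\theta^3}$ and $\alpha_3=\tfrac{c_\theta}{(C_v+1)c_\rho}$.

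It remains to control the nonlinear remainders $\alpha_1\!\int P\mathbb S_1+\alpha_3\!\int s\mathbb S_3+\lambda\!\int q\mathbb S_4$. Each $\mathbb S_i$ is schematically of the form (small factor)$\times$(first-order spatial derivative), the small factor being one of $c_\rho-\rho,\,c_\theta-\theta,\,P,\,s,\,(\rho^2)_x$, all of which are $O(\epsilon)$ in $L^\infty$ by the Sobolev embedding $H^1(\mathbb I)\hookrightarrow L^\infty(\mathbb I)$ applied to the a priori bound $(P,u,s,q,q_x)\in B_m([0,T];\mathbb I;\epsilon)$ with $m\ge 2$. Cauchy--Schwarz and Young's inequality, with a parameter chosen small compared to $\lambda c_\rho^2$, then absorb every $\|q\|^2$ and $\|(r^2q)_x\|^2$ contribution back into the LHS dissipation and bound the rest by $\epsilon\|(P_x,s_x,(r^2u)_x)\|^2$, yielding~\eqref{00}.

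The main obstacle I expect is not the algebra of the cancellation---which is dictated by the symmetric-hyperbolic-elliptic structure---but rather the bookkeeping of the radial factors. Because $r_x=1/(r^2\rho)$ is $O(1)$ rather than $O(\epsilon)$, the $r$-dependent weights cannot be treated as perturbations; the cross-term cancellation closes only when all $r$-factors are tracked identically across the hyperbolic and elliptic identities. This is also why the multipliers $\alpha_1,\alpha_3,\lambda$ must be chosen as pure constants (rather than functions of $(P,s)$), so as not to introduce additional $r$-dependent commutators in the time derivative.
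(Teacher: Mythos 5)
Your proposal matches the paper's proof essentially verbatim: the paper computes $\eqref{cns6}_1\times\frac{C_v}{(C_v+1)c_\rho}P+\eqref{cns6}_2\times u+\eqref{cns6}_3\times\frac{c_\theta s}{(C_v+1)c_\rho}+\eqref{cns6}_4\times\frac{1}{4c_\theta^3}q$, integrates over $\mathbb{I}$, uses the boundary conditions together with the multiplier choices to cancel the principal cross terms, bounds the nonlinear remainders by $\epsilon\|[P_x,s_x,r^2u,q,(r^2q)_x]\|^2$ via $H^1\hookrightarrow L^\infty$, absorbs the $q$-terms, and finishes with Poincar\'e's inequality (the last step you left implicit). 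The only discrepancy is your elliptic multiplier $\frac{1}{4c_\rho c_\theta^3}$ versus the paper's $\frac{1}{4c_\theta^3}$; this traces to $\eqref{cns6}_1$ in the paper listing $\frac{1}{C_v}(r^2q)_x$ where consistent linearization about $\rho=c_\rho$ should give $\frac{c_\rho}{C_v}(r^2q)_x$, so your value is the one that cancels with $\eqref{cns6}$ exactly as printed.
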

\begin{proof}
	By computing $\eqref{cns6}_1\times\frac{C_v}{(C_v+1)c_\rho}P+\eqref{cns6}_2\times u+\eqref{cns6}_3\times\frac{c_\theta s}{(C_v+1)c_\rho}+\eqref{cns6}_4\times\frac{1}{4c_\theta^3}q$ and integrating the resultant equality over $\mathbb{I}$, we have that
	\begin{align}\notag
		&\frac{{\rm d}}{{\rm d}t}\int_{\mathbb{I}}\left\{\frac{C_v}{(C_v+1)c_\rho}P^2+u^2+\frac{c_\theta }{(C_v+1)c_\rho}s^2\right\}\ {\rm d}x
		+\int_{\mathbb{I}}\frac{1}{4c_\theta^3}q^2+\frac{c_\rho^2}{4c_\theta^3}(r^2q)_x\ {\rm d}x\\\notag
		=&\int_{\mathbb{I}}\left[(\frac{\rho}{c_\rho}-1)P\right]_xr^2u-\left(\frac{\rho}{c_\rho} P^2\right)_xr^2u
		+\left[\frac{1}{C_v+1}\left(\frac{\rho}{c_\rho}-1\right)P\right]_xr^2q
		+\left[\left(\frac{c_\theta}{\theta}-1\right)\frac{s}{C_v+1}\right]_xr^2q+\\\notag
		&\left[\frac{r^2}{4c_\theta^3}(c_\rho^2-\rho^2)q\right]_x(r^2q)_{x}
		+\frac{r^2(c_\theta^3-\theta^3)}{(C_v+1)c_\theta^3}q\left(P_x+s_x\right)
		+\frac{1}{8c_\theta^3}r^2(\rho^2)_xq(r^2q)_x
		-\frac{r^2\theta^3}{(C_v+1)c_\theta^3}Pqs_x \ {\rm d}x	\\\notag
		\lesssim&\ \epsilon\left\|\left[P_x,s_x,r^2u,q,(r^2q)_x\right](t)\right\|^2	
	\end{align}
	With $Poincar\acute{e}$ inequality, \eqref{00} stands.
\end{proof}

\begin{lemma}\label{1jie}
	Under the assumption of Proposition \ref{a priori}, for any $t\in[0,T]$, it holds that
	\begin{equation*}
	  \vertiii{(P,u,s,q,q_x)(t)}_1^2+\int_{0}^{t}\left\|\left(DP,Ds\right)(s)\right\|^2+\vertiii{\left(u,q,q_x\right)(s)}_1^2{\rm d}s\leq V_0.
	\end{equation*}
\end{lemma}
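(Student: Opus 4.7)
The plan is to bootstrap the $L^2$-level bound of Lemma \ref{m=0} to the $m=1$ level by combining it with (i) the analogous energy estimate carried out on the $\partial_t$-differentiated system, and (ii) an algebraic recovery of the remaining spatial derivatives from the equations in \eqref{cns5}. The hyperbolic unknowns $(P,s)$ possess no intrinsic dissipation, so their spatial derivatives must be traded for time derivatives of $(u,q)$, while spatial derivatives of $q$ come from the ellipticity of $\eqref{cns5}_4$.

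For the tangential part, first apply $\partial_t$ to \eqref{cns6} and repeat the symmetrization used in Lemma \ref{m=0}: multiply the four equations respectively by $\frac{C_v}{(C_v+1)c_\rho}P_t$, $u_t$, $\frac{c_\theta}{(C_v+1)c_\rho}s_t$, and $\frac{1}{4c_\theta^3}q_t$, and integrate over $\mathbb{I}$. Every boundary contribution vanishes: $u_t$ and $q_t$ are zero on $\partial\mathbb{I}$ by differentiating \eqref{bdry} in $t$, and the compatibility conditions \eqref{compatible} ensure that the initial values of these time derivatives are consistent with the boundary. After Poincar\'e's inequality this reproduces the structure of \eqref{00} one derivative higher, yielding
$$\frac{d}{dt}\|(P_t,u_t,s_t)(t)\|^2+\bigl\|\bigl[q_t,(r^2q)_{xt}\bigr](t)\bigr\|^2\lesssim \epsilon\,\bigl\|\bigl[P_{xt},s_{xt},(r^2u)_{xt}\bigr](t)\bigr\|^2.$$

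The missing spatial derivatives are then recovered algebraically. From $\eqref{cns5}_2$, $r^4 P_x=2ru^2-(r^2u)_t$, so that $\|P_x\|\lesssim\|u_t\|+\epsilon\,\|u\|$; this converts $P_x$-control into $u_t$-control. Likewise $s_x$ is obtained from $\eqref{cns5}_4$ after subtracting off the $P_x$ contribution. Because $r^2\rho^2$ stays uniformly bounded away from zero (thanks to $a<r<b$ and the smallness of $\rho-1$), the operator $q\mapsto q-r^2\rho^2(r^2q)_{xx}$ is uniformly elliptic on $\mathbb{I}$ with homogeneous Dirichlet data, and the one-dimensional elliptic estimate applied to $\eqref{cns5}_4$ and to its $\partial_t$-differentiated form gives
$$\|q\|_{H^2}+\|q_t\|_{H^1}\lesssim \|(P_x,s_x,P_{xt},s_{xt})(t)\|+\epsilon\,\vertiii{(P,u,s,q,q_x)(t)}_1.$$

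Adding the $L^2$-identity of Lemma \ref{m=0} to the $\partial_t$-identity above, feeding in the algebraic recoveries, and absorbing the resulting $\epsilon$-times-dissipation on the left for $\epsilon$ small produces a differential inequality of the form $\frac{d}{dt}E_1(t)+c\,D_1(t)\le 0$ with $E_1(t)\sim\vertiii{(P,u,s,q,q_x)(t)}_1^2$ and $D_1(t)\sim\|(DP,Ds)(t)\|^2+\vertiii{(u,q,q_x)(t)}_1^2$. Integrating in $t$ and invoking \eqref{XmHm} to bound $E_1(0)\le V_0$ yields the claim. The main obstacle is the time-dependence of the Eulerian radius $r$: because $r_t=u$, every application of $\partial_t$ to \eqref{cns5} generates commutators between $\partial_t$ and the $r$-dependent coefficients, which are only small by virtue of $\|u\|_{L^\infty}\lesssim\sqrt{\epsilon}$. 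One must therefore verify carefully that each such commutator carries an explicit $O(\sqrt\epsilon)$ prefactor, so that it is absorbed by the dissipation rather than acting as genuine forcing that would destroy the closure.
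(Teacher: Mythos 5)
Your proposal has a structural gap in the very first step. You propose to apply $\partial_t$ to the linearized system \eqref{cns6} and repeat the symmetrization of Lemma~\ref{m=0}, which you say yields
\begin{equation*}
\frac{d}{dt}\|(P_t,u_t,s_t)(t)\|^2+\bigl\|\bigl[q_t,(r^2q)_{xt}\bigr](t)\bigr\|^2\lesssim \epsilon\,\bigl\|\bigl[P_{xt},s_{xt},(r^2u)_{xt}\bigr](t)\bigr\|^2.
\end{equation*}
But $P_{xt}$, $s_{xt}$, $(r^2u)_{xt}$ are \emph{second-order} mixed derivatives, and none of them lives in $\vertiii{(P,u,s,q,q_x)}_1$ nor in the dissipation functional $\|(DP,Ds)\|^2+\vertiii{(u,q,q_x)}_1^2$ that you are trying to close. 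There is no $m=1$ quantity to absorb them: the $\epsilon$ prefactor does not help because one can only invoke the a priori bound $\vertiii{\cdot}_m\leq\epsilon$ at the cost of a right-hand side that, after time integration, grows like $T\epsilon^3$, which is useless for a bound uniform in $T$. Attempting to trade $P_{xt}$ for lower-order data via the equations fails too: $\eqref{cns5}_2$ gives $P_{xt}\sim u_{tt}$ and $\eqref{cns5}_1$ gives $P_{xt}\sim (r^2u)_{xx}$, both of which are again outside the $m=1$ framework. In short, the $\partial_t$-differentiated energy identity for the hyperbolic block is intrinsically an $m\geq2$ tool (it is exactly the mechanism of Lemma~\ref{m>2}); it does not provide the base case.

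A second gap: the claim that ``$s_x$ is obtained from $\eqref{cns5}_4$ after subtracting off the $P_x$ contribution'' is circular. Solving $\eqref{cns5}_4$ for $s_x$ gives $\|s_x\|\lesssim \|q\|+\|(r^2q)_{xx}\|+\|P_x\|+\epsilon\|(r^2q)_x\|$, but the elliptic bound for $\|(r^2q)_{xx}\|$ is $\lesssim \|P_x\|+\|s_x\|$, so $s_x$ reappears on the right with an $O(1)$ coefficient and nothing cancels. The paper does not recover $s_x$ algebraically at all; it obtains the dissipation $\|r^2P_x+r^2s_x\|^2$ from the $\partial_x$-energy identity \eqref{10}, separately obtains $\|P_x\|^2$ dissipation from $\eqref{cns5}_2\times P_x$ (estimate \eqref{12}), and combines them with the well-separated weights $\epsilon\ll\delta_1\ll\delta_2\ll\delta_3\ll1$. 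The paper's actual route for Lemma~\ref{1jie} is thus: (i) a $\partial_x$-energy estimate \eqref{Pusqx}--\eqref{10} in which the nonlinear weights $\tfrac{C_v}{C_v+1}r^4P_x$, $\rho(P+1)(r^2u)_x$, $\tfrac{1}{C_v+1}(P+1)r^4s_x$ are chosen so that the off-diagonal hyperbolic terms combine into a total $x$-derivative whose boundary contribution vanishes because $r^4P_x=2ru^2-(r^2u)_t=0$ on $\partial\mathbb{I}$; (ii) the auxiliary cross-estimates \eqref{11} and \eqref{12} to produce $\|(r^2u)_x\|^2$ and $\|P_x\|^2$ dissipation; (iii) elliptic bounds \eqref{q1}--\eqref{q2} for $(q,q_x,q_{xx})$; (iv) \emph{algebraic} recovery of $P_t,u_t,s_t$ directly from the first three equations of \eqref{cns6}; and (v) the estimate $\partial_t\eqref{cns5}_4\times q_t$ for $q_t,(r^2q)_{xt}$, where the problematic $P_{xt},s_{xt}$ are integrated by parts onto $q_{xt}$ (using $q_t=0$ on $\partial\mathbb{I}$) and so never appear in $L^2$. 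The contrast with your plan is that the paper keeps all the energy estimates at the purely spatial level for the hyperbolic unknowns and reads time derivatives off the equations, precisely to avoid the derivative loss your $\partial_t$-energy step incurs.
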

\begin{proof}
By computing
\begin{multline}\label{Pusqx}
	\partial_x\eqref{cns5}_1\times\frac{C_v}{C_v+1}r^4P_x+\partial_x\eqref{cns5}_2\times \rho(P+1)(r^2u)_x\\+\partial_x\eqref{cns5}_3\times\frac{1}{C_v+1}(P+1)r^4s_x+\eqref{cns5}_4\times\frac{1}{(C_v+1)r^2\rho}(r^4P_x+r^4s_x)
\end{multline}
and integrating the resultant equality over $\mathbb{I}$, we have that
 \begin{multline}\label{10}
 	\frac{\rm d}{{\rm d}t}\int_{\mathbb{I}}\frac{C_v}{2(C_v+1)}r^4P_x^2+\frac{1}{2}\rho(P+1)\left[(r^2u)_x\right]^2+\frac{1}{2(C_v+1)}(P+1)r^4s_x^2\ {\rm d}x+\\
 	\int_{0}^{t}\int_{\mathbb{I}}\frac{4\theta^3}{\rho(C_v+1)^2}(r^2P_x+r^2s_x)^2{\rm d}x{\rm d}s\lesssim\epsilon\left\|\left[P_x,s_x,(r^2u)_x\right](t)\right\|^2+\left\|\left[q,(r^2q)_x\right](t)\right\|^2.
 \end{multline}
 Here that $\int_{\mathbb{I}}\partial_x\left[\left(r^4P_x\right)(r^2u)_x\right]{\rm d}x=0$ is because $r^4P_x=2ru^2-(r^2u)_t=0$ on $\partial\mathbb{I}$. To make up for the dissipation of $(r^2u)_x$, we compute $\left(\eqref{cns6}_1+\eqref{cns6}_2\right)\times(r^2u)_x$
  and integrate the resultant equality over $\mathbb{I}$ to obtain
  \begin{equation}\label{11}
  	\frac{\rm d}{{\rm d}t}\int_{\mathbb{I}}(P+s)(r^2u)_x\ {\rm d}x+\left\|(r^2u)_x\right\|^2
  	\lesssim \left\|(r^2q)_x\right\|^2+\left\|P_x+s_x\right\|^2+\epsilon\left(\|P_x\|^2+\|(r^2u)_x\|^2\right).
  \end{equation}
As for the dissipation of $P_x$, we multiply $\eqref{cns5}_2$ by $\times P_x$ and integrate the resultant equality over $\mathbb{I}$ to get
  \begin{equation}\label{12}
  	\frac{\rm d}{{\rm d}t}\int_{\mathbb{I}}r^2uP_x\ {\rm d}x+\left\|P_x\right\|^2\lesssim \left\|\left[(r^2u)_x,(r^2q)_x\right]\right\|^2.
  \end{equation}
Finally take $\eqref{00}+\delta_3\eqref{10}+\delta_2\eqref{11}+\delta_1\eqref{12}$ and we have
\begin{align*}
	\frac{\rm d}{{\rm d}t}\int_{\mathbb{I}}&(P^2+u^2+s^2)+
	\delta_3\left\{\frac{C_v}{2(C_v+1)}r^4P_x^2+\frac{1}{2}\rho(P+1)\left[(r^2u)_x\right]^2+\frac{1}{2(C_v+1)}(P+1)r^4s_x^2\right\}\\[2mm]
	&\qquad\qquad\quad\quad
	+\delta_2\left[(P+s)(r^2u)_x\right]+\delta_1\left(r^2uP_x\right){\rm d}x\\[2mm]
	&\quad\qquad\qquad\qquad\quad\quad+\left\{\left\|\left[q,(r^2q)_x\right]\right\|^2+\delta_3\|P_x+s_x\|^2+\delta_2\left\|(r^2u)_x\right\|^2+\delta_1\|P_x\|^2\right\}\leq0
\end{align*}
where $\delta_i\ (i=1,2,3)$ are positive constants satisfying $\epsilon\ll\delta_1\ll\delta_2\ll\delta_3\ll1$. Integrate the last inequality over $(0,t)$, we have that
\begin{align}\label{pus1x}
	\left\|(P,u,s)(t)\right\|_{H^1}^2+\int_{0}^{t}\left\|\left(P_x,s_x\right)(s)\right\|^2+\left\|\left(u,q\right)(s)\right\|_{H^1}^2{\rm d}s\leq V_0.
\end{align}
Here the estimate of $\int_{0}^t\|u(s)\|^2{\rm d}s$ is obtained by $Poincar\acute{e}$ inequality, since $r^2u=0$ on $\partial\mathbb{I}$.
Let $\eqref{cns6}_4\times q$ and integrate the resultant equality over $\mathbb{I}$ to obtain
\begin{equation}\label{q1}
\|q(t)\|^2+\|(r^2q)_x(t)\|^2\lesssim\|P_x(t)\|^2+\|s_x(t)\|^2.
\end{equation}
Then we take $\eqref{cns5}_4\times (r^2q)_{xx}$ and integrate the resultant equality over $\mathbb{I}$ to get
\begin{equation}\label{q2}
\|(r^2q)_{xx}(t)\|^2\lesssim\|P_x(t)\|^2+\|s_x(t)\|^2.
\end{equation}
With \eqref{q1}, \eqref{q2} and \eqref{pus1x}, we have
that
\begin{align}\label{pusq1x}
	\left\|(P,u,s,q,q_x)(t)\right\|_{H^1}^2+\int_{0}^{t}\left\|\left(P_x,s_x\right)(s)\right\|^2+\left\|\left(u,q,q_x\right)(s)\right\|_{H^1}^2{\rm d}s\leq V_0.
\end{align}
From the first three equations in \eqref{cns6}, we can see that $\left\|(P_t,u_t,s_t)\right\|$ are bounded by the left side of \eqref{pusq1x}, which gives
\begin{align}\label{pus1x1t}
\vertiii{(P,u,s)(t)}_1^2+\left\|(q,q_x)(t)\right\|_{H^1}^2+\int_{0}^{t}\left\|\left(DP,Ds\right)(s)\right\|^2
+\vertiii{u(s)}_1^2
+\left\|(q,q_x)(s)\right\|_{H^1}^2{\rm d}s\leq V_0
\end{align}
	By taking $\partial_t\eqref{cns5}_4\times q_t$ and integrating the resultant equality over $\mathbb{I}$, we have that
	\begin{align}\notag
	&\|q_t(t)\|^2+\|(r^2q)_{xt}(t)\|^2\lesssim\|\left(P_t+s_t\right)\|\|(r^2q)_{xt}(t)\|^2+\epsilon\left(\|q_t\|^2+\|(r^2q)_{xx}\|^2\right)+\epsilon\|(r^2q)_{xt}\|^2\\[2mm]\notag
	&+\|(P_x,s_x)(t)\|^2+\int_{\mathbb{I}}s_t\left\{\frac{4\theta^3}{C_v+1}P\left[\partial_t(r^2q)-2ruq\right]\right\}_x-\rho_t\Big\{r^2\rho(r^2q)_x\left[\partial_t(r^2q)-2ruq\right]\Big\}_x{\rm d}x\\[2mm]\label{qt1}
	&\lesssim\epsilon\left(\|q_t\|^2+\|(r^2q)_{xt}\|^2\right)+\|(r^2q)_{xx}\|^2+\|(P_x,u_x,s_x,s_t,q)\|^2.
	\end{align}
By combining \eqref{qt1} and \eqref{pus1x1t}, we complete the proof of Lemma \ref{1jie}.
\end{proof}
\begin{remark}
	In \eqref{10}, we have utilized $\left|\left(P_x,P_t\right)(t)\right|_{L^\infty}\lesssim\left\|\left(P_x,P_t\right)(t)\right\|_{H^1}\lesssim\epsilon$, which is why we require $m\geq2$.
\end{remark}
\begin{remark}
	From \eqref{q1}, it can be easily derived that
	\begin{align*}
		\left\|q_x(t)\right\|^2\lesssim\left\|(r^2q_x)(t)\right\|^2\lesssim\|q(t)\|^2+\|(r^2q)_x(t)\|^2\lesssim\|P_x(t)\|^2+\|s_x(t)\|^2\leq V_0.
	\end{align*}
	And we can see if $\vertiii{q(t)}_k\leq V_0$\ep{$k\geq0$}, that  $\vertiii{(r^2q)_x(t)}_k\leq V_0$ is equivalent to that $\vertiii{q_x}_k\leq V_0$. And the similar conclusion also stands for $u$. These will be applied directly for the rest of this paper.
\end{remark}
 In the following Lemma, we use the induction method to prove the a priori estimates when $m\geq2$
\begin{lemma}\label{m>2}
	Under the assumption of Proposition \ref{a priori}, for any $t\in[0,T]$, it holds that
	\begin{equation}\label{m}
	\vertiii{\left(P,u,s,q,q_x\right)(t)}_m^2+\int_{0}^{t}\vertiii{(DP,Ds)(s)}^2_{m-1}+\vertiii{(u,q,q_x)(s)}^2_m{\rm d}s\leq V_0.
	\end{equation}
\begin{proof}
We assume \eqref{m} holds for any integer $m=k(k\geq2)$. Since \eqref{m} has been proved for $m=1$, we aim to prove \eqref{m} for $m=k+1$ and what's left is to obtain the estimates for $k+1$ order derivatives of $\left(P,u,s,q,q_x\right)$.

First, we derive the estimates of $P$ and $u$. Similar to what we did in Lemma \ref{1jie}, compute that
\begin{multline}\label{Pusqtkx}
\partial_t^k\partial_x\eqref{cns5}_1\times\frac{C_v}{C_v+1}\partial_t^k\left(r^4P_x\right)
+\partial_t^k\partial_x\eqref{cns5}_2\times\partial_t^k\left[\rho(P+1)(r^2u)_x\right]+
\partial_t^k\partial_x\eqref{cns5}_3\times\\\frac{1}{C_v+1}(P+1)\partial_t^k\left(r^4s_x\right)
+\partial_t^k\left[r^2\cdot\eqref{cns5}_4\right]\times\frac{1}{(C_v+1)r^4\rho}\left[\partial_t^k\left(r^4P_x\right)
+\partial_t^k\left(r^4s_x\right)\right]
\end{multline}
and integrate the resultant equality over $\mathbb{I}$ to get that
\begin{align}\notag
  &\frac{{\rm d}}{{\rm d}t}\int_{\mathbb{I}}\left\{\frac{C_v}{2(C_v+1)}r^4(\partial_t^k\partial_xP)^2
  +\frac{C_v}{C_v+1}\partial_t^k\partial_xP\left[\partial_t^k,r^4\right]P_x
  +\frac{1}{2}\rho(P+1)\left[\partial_t^k\partial_x(r^2u)\right]^2\right.\\\notag
  &\qquad+\partial_t^k\partial_x(r^2u)\left[\partial_t^k,\rho(P+1)\right]_t(r^2u)_x\notag
  +\frac{1}{2(C_v+1)}(p+1)r^4\left(\partial_t^k\partial_xs\right)^2\\\notag
  &\qquad+\left.\frac{1}{C_v+1}\partial_t^k\partial_xs(p+1)\left[\partial_t^k,r^4\right]s_x\right\}{\rm d}x
  +\left\|\partial_t^k\left(r^4P_x\right)+\partial_t^k\left(r^4s_x\right)(t)\right\|^2\\\notag
  &\lesssim\int_{\mathbb{I}}\frac{C_v}{2(C_v+1)}4r^3u\left(\partial_t^k\partial_xP\right)^2
  +\frac{C_v}{C_v+1}\partial_t^k\partial_xP\frac{{\rm d}}{{\rm d}t}\left\{\left[\partial_t^k,r^4\right]P_x\right\}
  \\\notag
  &\qquad-\frac{1}{C_v+1}\left[\partial_t^k\partial_x,\rho\right](r^2q)_x\partial_t^k(r^4P_x)
  \ {\rm d}x+\int_{\mathbb{I}}\frac{1}{2}\frac{{\rm d}}{{\rm d}t}\left[\rho(P+1)\right]\left[\partial_t^k\partial_x(r^2u)\right]^2\\\notag
  &\qquad+\underbrace{\partial_t^k\partial_x(r^2u)\frac{{\rm d}}{{\rm d}t}\left\{\left[\partial_t^k,\rho(P+1)\right](r^2u)_x\right\}}_{\circled{1}}
  +\underbrace{\partial_t^k\partial_x(2ru^2)\partial_t^k\left[\rho(P+1)(r^2u)_x\right]}_{\circled{2}}\ {\rm d}x\\\notag
  &\qquad+\int_{\mathbb{I}}\frac{1}{2(C_v+1)}\frac{{\rm d}}{{\rm d}t}\left[(P+1)r^4\right]\left(\partial_t^k\partial_xs\right)^2
  +\frac{1}{C_v+1}\partial_t^k\partial_xs\frac{{\rm d}}{{\rm d}t}\left\{(P+1)\left[\partial_t^k,r^4\right]s_x\right\}\\\notag
  &\qquad-\frac{1}{C_v+1}(P+1)\left[\partial_t^k\partial_x,\frac{1}{\theta}\right](r^2q)_x\ \partial_t^k\left(r^4s_x\right)\ {\rm d}x
  +\\\notag
  &\qquad\int_{\mathbb{I}}-\frac{1}{r^4(C_v+1)r^4\rho}\left[\partial_t^k,r^4\rho^2\right](r^2q)_{xx} \left[\partial_t^k\left(r^4P_x\right)+\partial_t^k\left(r^4s_x\right)\right]\\\notag
  &\qquad-\frac{1}{r^4(C_v+1)r^4\rho}\left[\partial_t^k,\frac{4\theta^3}{C_v+1}\left(r^4P_x+r^4s_x\right)\right]\left[\partial_t^k\left(r^4P_x\right)+\partial_t^k\left(r^4s_x\right)\right]\\\notag
  &\qquad-\frac{1}{r^4(C_v+1)r^4\rho}\partial_t^k(r^2q)\left[\partial_t^k\left(r^4P_x\right)+\partial_t^k\left(r^4s_x\right)\right]\\\notag
  &\qquad+\partial_t^k\left[r^4\rho\rho_x(r^2q)_x\right]\frac{1}{r^4(C_v+1)r^4\rho}\left[\partial_t^k\left(r^4P_x\right)+\partial_t^k\left(r^4s_x\right)\right]\\\notag
  &\qquad-\partial_t^k\left(\frac{4\theta^3r^4}{C_v+1}\right)\frac{1}{r^4(C_v+1)r^4\rho}\left[\partial_t^k\left(r^4P_x\right)+\partial_t^k\left(r^4s_x\right)\right]\ {\rm d}x\\\label{tkx1}
  &\lesssim\epsilon\left\|\partial_t^k\partial_x(P,s,r^2u)\right\|^2+ \vertiii{(P,u,s,q,q_x)} _k^2.
\end{align}
Here $[\cdot,\cdot]$ denotes the commutator of operators. As for the last inequality, we use the following two examples to illustrate how the estimates are done.
\begin{align}
	\int_{\mathbb{I}}\circled{1}\ {\rm d}x
	\lesssim\epsilon\left\|\partial_t^k\partial_x(r^2u)\right\|^2
	+\left\|\frac{{\rm d}}{{\rm d}t}\left\{\left[\partial_t^k,\rho(P+1)\right](r^2u)_x\right\}\right\|^2
\end{align}
Here,
\begin{align*}
	\left\|\frac{{\rm d}}{{\rm d}t}\left\{\left[\partial_t^k,\rho(P+1)\right](r^2u)_x\right\}\right\|^2
	=&\left\|\sum_{i=1}^{k}\partial_t^{i+1}\left[\rho\left(P+1\right)\right]\partial_t^{k-i}\left(r^2u\right)_x+\partial_t^i\left[\rho\left(P+1\right)\right]\partial_t^{k+1-i}\left(r^2u\right)_x\right\|^2\\
	\lesssim&\sum_{i=0}^{k}\left\|\partial_t^{i+1}\left[\rho\left(P+1\right)\right]\partial_t^{k-i}\left(r^2u\right)_x\right\|^2
\end{align*}
When $i=0$,
\begin{align}\notag
	\left\|\partial_t\left[\rho\left(P+1\right)\right]\partial_t^{k}\left(r^2u\right)_x\right\|^2	\lesssim\left|\partial_t\left[\rho\left(P+1\right)\right]\right|^2_{L^\infty}\left\|\partial_t^k\partial_x(r^2u)\right\|^2\lesssim\epsilon\left\|\partial_t^k\partial_x(r^2u)\right\|^2.
\end{align}
when $i=k$, $\partial_t^{k+1}P$ and $\partial_t^{k+1}s$ would appear. To cope with that, we apply  $\partial_t^k$ on the first and third equation in \eqref{cns5} which enable us to substitute $\partial_t^{k+1}P$ and $\partial_t^{k+1}s$ with terms containing $\partial_t^{k}\partial_x(r^2u)$ and $\partial_t^{k}\partial_x(r^2q)$. So we have
\begin{align*}
	\left\|\partial_t^{k+1}\left[\rho\left(P+1\right)\right]\left(r^2u\right)_x\right\|^2
	\lesssim&\left|(r^2u)_x\right|^2_{L^\infty}\left(\left\|\partial_t^{k}\partial_x(r^2u)\right\|^2+\left\|\partial_t^{k}\partial_x(r^2q)\right\|^2
	+\epsilon\vertiii{(P,s)}_k^2\right)\\
	\lesssim&\epsilon\left(\left\|\partial_t^{k}\partial_x(r^2u)\right\|^2+\left\|\partial_t^{k}\partial_x(r^2q)\right\|^2\right)+\vertiii{(P,s)}_k^2
\end{align*}
When $i=k-1$,
\begin{align*}
	\left\|\partial_t^{k}\left[\rho\left(P+1\right)\right]\partial_t\left(r^2u\right)_x\right\|^2
	\lesssim&\left\|\partial_t\partial_x(r^2u)\right\|^2\left|\partial_t^{k}\left[\rho\left(P+1\right)\right]\right|^2_{L^\infty}\lesssim\epsilon\left\|\partial_t^{k}\left(P,s\right)\right\|_{H^1}^2.
\end{align*}
Similarly the term can be estimated when $1\leq i\leq k-2$.
As for $\circled{2}$, we have
\begin{align*}
		\int_{\mathbb{I}}\circled{2}\ {\rm d}x\lesssim&\epsilon\left\|\partial_t^k\left[\rho(P+1)(r^2u)_x\right]\right\|^2
		+\left\|\partial_t^k\partial_x(2ru^2)\right\|^2\\
		\lesssim&\epsilon\left(\left\|\partial_t^k\partial_x(r^2u)\right\|^2+\vertiii{(P,s)}_k^2\vertiii{(r^2u)}_k^2\right)+\epsilon\left(\left\|\partial_t^k\partial_xu\right\|^2+\vertiii{u}_k^2\right).
\end{align*}
In the last inequality, we have utilized a more detailed version of Lemma \ref{inequality} $(1)$ to obtain the estimate that
\begin{align*}
	&\left\|\partial_t^k\partial_x(2ru^2)\right\|\lesssim\vertiii{r}_{k}\vertiii{u}_{k}\left(\left\|\partial_t^k\partial_xu\right\|+\vertiii{u}_{k}\right)+\vertiii{r}_{k+1}\vertiii{u}_{k}\vertiii{u}_{k}.
\end{align*}
To make up for the dissipation of $\partial_t^k\partial_x(r^2u)$, we compute $\partial_t^k\left(\eqref{cns5}_1+\eqref{cns5}_3\right)\times\partial_t^k\partial_x(r^2u)$ and integrate the resultant equality over $\mathbb{I}$ to get that
\begin{align}\notag
    \frac{{\rm d}}{{\rm d}t}&\int_{\mathbb{I}}\left[\left(\partial_t^kP+\partial_t^ks\right)\partial_t^k\partial_x(r^2u)\right]{\rm d}x
	+\int_{\mathbb{I}}\frac{C_v+1}{C_v}\rho(P+1)\left[\partial_t^k\partial_x(r^2u)\right]^2
	-\left[\left(\partial_t^kP+\partial_t^ks\right)\partial_t^k(r^2u)\right]_x {\rm d}x\\\notag
	=&\int_{\mathbb{I}}-\left(\partial_t^kP+\partial_t^ks\right)\partial_t^{k+1}(r^2u)
	-\frac{C_v+1}{C_v}\left[\partial_t^k,\rho(P+1)\right](r^2u)_x\partial_t^k\partial_x(r^2u)\\\label{tk1}
	&\quad -\frac{1}{C_v}\partial_t^k\partial_x(r^2q)\partial_t^k\partial_x(r^2u)
	-\partial_t^k\left[\frac{(r^2q)_x}{\theta}\right]\partial_t^k\partial_x(r^2u)\ {\rm d}x.
\end{align}
Apply $\partial_t^k$ on the third equation in \eqref{cns5} and we have
\begin{align}\label{tku}
	\partial_t^{k+1}(r^2u)=-\partial_t^k(r^4P_x)+\partial_t^k(2ru^2).
\end{align}
Substitute \eqref{tku} into the first term on the right side of \eqref{tk1} to get
\begin{multline}\label{tkx1u}
   \|\partial_t^k(r^2u)_x\|^2+\frac{{\rm d}}{{\rm d}t}\int_{\mathbb{I}}\left(\partial_t^kP+\partial_t^ks\right)\partial_t^k\partial_x(r^2u)\ {\rm d}x\\\lesssim\epsilon\left\|\partial_t^k(r^4P_x)\right\|^2+\left\{\left\|\partial_t^k\partial_xP+\partial_t^k\partial_xs\right\|^2+ \vertiii{\left[P,u,s,(r^2q)_x\right]}_k^2\right\}.    	
\end{multline}
Similarly we add the dissipation of $\partial_t^k(r^4P_x)$ by having $\eqref{tku}\times\partial_t^k(r^4P_x)$ and integrating it over $\mathbb{I}$, which gives
\begin{align}\label{tkx1P}
	\|\partial_t^k(r^4P_x)\|^2+\frac{{\rm d}}{{\rm d}t}\int_{\mathbb{I}}\left[\partial_t^k(r^2u)\partial_t^k(r^4P_x)\right]\ {\rm d}x\lesssim\vertiii{\left[P,u,s,(r^2q)_x\right]}_k^2+\left\|\partial_t^k\partial_x(r^2u)\right\|^2.
\end{align}
By taking $\eqref{tkx1}+\delta_7\eqref{tkx1u}+\delta_6\eqref{tkx1P}$ ($\epsilon\ll\delta_6\ll\delta_7\ll1$) and integrating the resultant inequality over $(0,t)$, we have that
\begin{align}\label{sum2}
\|\partial_t^k\partial_x\left(P,u,s\right)(t)\|^2+\int_{0}^{t}\|\partial_t^k\partial_x\left(P,u,s\right)(t)\|^2{\rm d}s\leq V_0.
\end{align}
With \eqref{sum2}, if we apply $\partial_t^{k-i}\partial_x^i$ on the first two equation in \eqref{cns5}, iteratively we can have the estimates that
\begin{align}
	\|\partial_t^{k-i}\partial_x^{i+1}\left(P,u\right)(t)\|^2+\int_{0}^{t}\|\partial_t^{k+1-i}\partial_x^i\left(P,u\right)(t)\|^2{\rm d}s\leq V_0,\quad\text{for }i=1,\cdots,k.
\end{align}
Instead of $\partial_t^{k-i}\partial_x^i$, if we apply $\partial_t^{k}$ and we can get the estimate for $\partial_t^{k+1}\left(P,u\right)$.

Second, we derive the estimates of $s$. Since we already have the estimates of $\vertiii{(q,q_x)}_k$, which together with the structure of third equation in \eqref{cns5} enable us to estimate all the $k+1$ order derivatives of $s$ except of $\partial_x^{k+1}s$. To overcome that, we compute $\partial_x^{k+1}\eqref{cns5}_3\times\partial_x^{k+1}s$ and integrate the resultant equality over $\mathbb{I}$ to get
\begin{align*}
	\frac 12\frac{{\rm d}}{{\rm d}t} \left\|\partial_x^{k+1}s(t)\right\|^2+
	\int_{\mathbb{I}}\frac{\partial_x^{k+2}\left(r^2q\right)}{\theta}\ \partial_x^{k+1}s\ {\rm d}x
	=\int_{\mathbb{I}}\left[\partial_x^{k+1},\frac{1}{\theta}\right]\left(r^2q\right)_x\partial_x^{k+1}s{\rm d}x.
\end{align*}
Then we compute $\partial_x^k\left(\frac{1}{r^2\rho^2}\eqref{cns5}_4\right)$, which can be used to represent $\partial_x^{k+2}\left(r^2q\right)$ in the last equality. After substitute for $\partial_x^{k+2}\left(r^2q\right)$, we have
\begin{align}\notag
	&\frac{{\rm d}}{{\rm d}t}\left\|\partial_x^{k+1}s(t)\right\|^2
	+\int_{\mathbb{I}}\frac{8\theta^2}{\left(C_v+1\right)\rho^2}\left(\partial_x^{k+1}s\right)^2{\rm d}x
	\lesssim\epsilon\left\|\partial_x^{k+1}s\right\|^2
	+\left\|\left[\partial_x^k,\frac{4\theta^3}{\left(C_v+1\right)\rho^2}\right]s_x\right\|^2
	+\\\notag
	&\left\|\partial_x^k\left(\frac{q}{r^2\rho^2}+\frac{4\theta^3}{\left(C_v+1\right)\rho^2}P_x+\frac{\rho_x}{\rho}(r^2q)_x\right)\right\|^2
	+\left\|\left[\partial_x^k,\frac{4\theta^3}{\left(C_v+1\right)\rho^2}P\right]s_x\right\|^2
	+\left\|\left[\partial_x^{k+1},\frac{1}{\theta}\right]\left(r^2q\right)_x\right\|^2\\[2mm]\label{xk+1s}
	&\qquad\qquad\qquad\qquad\qquad\qquad\qquad\qquad\qquad\quad\  \lesssim\epsilon\left\|\partial_x^{k+1}s\right\|^2+\vertiii{\left(s,q,q_x\right)}_{k}^2+\vertiii{P}_{k+1}^2.
\end{align}
Integrate the inequality above over $[0,t]$ and we have
\begin{align}
	\left\|\partial_x^{k+1}s(t)\right\|^2
	+\int_{0}^t\left\|\partial_x^{k+1}s(\tau)\right\|^2{\rm d}\tau\leq V_0.
\end{align}
Now we use the following two estimates to illustrate why \eqref{xk+1s} stands.
\begin{align*}
    \left\|\partial_x^k\left(\frac{q}{r^2\rho^2}\right)\right\|\quad
	&\mathrel{\overset{\makebox[0pt]{\mbox{\tiny\sffamily Lemma \ref{inequality}\ep{1}}}}{\leq}}\quad
	\vertiii{\frac{1}{r^2\rho^2}}_k\vertiii{q}_k\\
		&\mathrel{\overset{\makebox[0pt]{\mbox{\tiny\sffamily Lemma \ref{inequality}\ep{3}}}}{\leq}}\quad
		\left\|\frac{1}{r^2\rho^2}\right\|_{C_k(r,\rho)}\left[\left(1+\vertiii{r}_k\right)^{k-1}\vertiii{\rho}_k+\left(1+\vertiii{\rho}_k\right)^{k-1}\vertiii{r}_k\right]\vertiii{q}_k\lesssim \vertiii{q}_k,
		\end{align*}
		\begin{align*}
	&\left\|\left[\partial_x^{k+1},\frac{1}{\theta}\right]\left(r^2q\right)_x\right\|=\left\|D^\alpha\left(\frac{1}{\theta}\right)D^\beta\left(r^2q\right)_x\right\|\\[3mm]
	\ \ &\qquad\lesssim\left\{
	\begin{aligned}
	&\left|D\left(\frac{1}{\theta}\right)\right|_{L^{\infty}}\vertiii{\left(r^2q\right)_x}_{k}
	\lesssim\vertiii{\frac{1}{\theta}}_2\vertiii{\left(r^2q\right)_x}_{k},\ \begin{aligned}
	&|\alpha|=1\\&|\beta|=k
	\end{aligned}\\[5mm]
	&\vertiii{\frac{1}{\theta}}_{k+1}\left|\left(r^2q\right)_x\right|_{L^{\infty}}
	\lesssim\vertiii{\frac{1}{\theta}}_{k+1}\vertiii{\left(r^2q\right)_x}_1,\qquad\ \begin{aligned}
	&|\alpha|=k+1\\&|\beta|=0
	\end{aligned}\\[5mm]
	&\vertiii{\frac{1}{\theta}}_{k+1}\vertiii{\left(r^2q\right)_x}_{k}\ (\text{with Lemma \ref{inequality} (2)}),\quad\begin{aligned}
	&|\alpha|+|\beta|<k+1\\
	&|\alpha|\leq k\\
	&|\beta|\leq k-1
	\end{aligned}
	\end{aligned}
	\right.	\\
	&\qquad\lesssim\vertiii{\frac{1}{\theta}}_{k+1}\vertiii{\left(r^2q\right)_x}_{k}\lesssim\left(1+\vertiii{(P,s)}_{k+1}\right)\vertiii{\left(r^2q\right)_x}_{k}\\
	&\qquad\lesssim\vertiii{\left(r^2q\right)_x}_{k}+\vertiii{P}_{k+1}+\vertiii{s}_{k}+\sum_{i=1}^{k+1}\left\|\partial_x^{k+1-i}\partial_t^is\right\|+\epsilon\left\|\partial_x^{k+1}s\right\|.
\end{align*}
Third, Let's go for the estimates of $q$. According to the induction hypothesis,
\begin{align*}
	\vertiii{(q,q_x)(t)}_k^2+\int_{0}^t\vertiii{(q,q_x)(\tau)}_k^2\ {\rm d}\tau\leq V_0.
\end{align*}
We aim to prove above estimates for $\vertiii{(q,q_x)}_{k+1}$. The remaining terms in $\vertiii{(q,q_x)}_{k+1}$ are $\left\|\partial_t^{k+1}q\right\|$ and $\left\|D^\alpha q_x\right\|$ $(\,|\alpha|=k+1\,)$. Apply $\partial_t^{k-i}\partial_x^i$ $(0\leq i\leq k)$ on the fourth equation in \eqref{cns5} and we have
\begin{align*}
\left\|\partial_t^{k-i}\partial_x^{i+2}(r^2q)\right\|^2
\lesssim\vertiii{\left(DP,Ds\right)}^2_{k}+\vertiii{u}_{k+1}+\vertiii{(q,q_x)}_{k}^2.
\end{align*}
The estimates that are left to derive are those of $\left\|\partial_t^{k+1}q\right\|$ and $\left\|\partial_t^{k+1}\partial_xq\right\|$.
We apply $\partial_t^{k+1}$ on the fourth equation in \eqref{cns5}, multiply the resultant equality by $\partial_t^{k+1}q$ and integrate  it over $\mathbb{I}$, which gives that
\begin{align}\notag
&\left\|\partial_t^{k+1}q\right\|^2+\int_{\mathbb{I}}\rho^2\left[\partial_t^{k+1}\partial_x\left(r^2q\right)\right]^2{\rm d}x
=\\\notag
&\qquad\qquad\quad\int_{\mathbb{I}}-2\rho\rho_x\partial_t^{k+1}\partial_x\left(r^2q\right)\partial_t^{k+1}\left(r^2q\right)
+\partial_t^{k+1}\partial_x\left(r^2q\right)\left[\left(\left[\partial_t^{k+1},r^2\right]q\right)\rho^2\right]_x\\\notag
&\qquad\qquad\quad+\Big\{\partial_t^{k+1}\left[r^2\rho^2(r^2q)_{xx}\right]-r^2\rho^2\partial_t^{k+1}(r^2q)_{xx}\Big\}\partial_t^{k+1}q\\[1.5mm]\notag
&\qquad\qquad\quad-\left\{\partial_t^{k+1}\left[\frac{4r^2\theta^3}{C_v+1}\left(P_x+s_x\right)\right]
-\frac{4r^2\theta^3}{C_v+1}\partial_t^{k+1}\partial_x  \left(P+s\right)\right\}\partial_t^{k+1}q\\\notag
&\qquad\qquad\quad+\partial_t^{k+1}\left(P+s\right)\left\{\left[\partial_t^{k+1}(r^2q)-\left(\left[\partial_t^{k+1},r^2\right]q\right)\right]\frac{4\theta^3}{C_v+1}\right\}_x\\\notag
&\qquad\qquad\quad+\Big\{\partial_t^{k+1}\left[r^2\rho\rho_x(r^2q)_x\right]-r^2\rho\partial_t^{k+1}\rho_x(r^2q)-r^2\rho\rho_x\partial_t^{k+1}(r^2q)_x\Big\}\partial_t^{k+1}q\\\notag
&\qquad\qquad\quad-\partial_t^{k+1}\rho\Big\{\rho(r^2q)_x\left[\partial_t^{k+1}(r^2q)-\left[\partial_t^{k+1},r^2\right]q\right]\Big\}_x
+r^2\rho\rho_x\partial_t^{k+1}(r^2q)_x\partial_t^{k+1}q\\\notag
&\qquad\qquad\quad-\left[\partial_t^{k+1}\left(\frac{4r^2\theta^3}{C_v+1}Ps_x\right)-\frac{4r^2\theta^3}{C_v+1}P\partial_t^{k+1}s_x\right]\partial_t^{k+1}q\\\notag
&\qquad\qquad\quad+\partial_t^{k+1}s\left\{\frac{4\theta^3}{C_v+1}P\left[\partial_t^{k+1}(r^2q)-\left[\partial_t^{k+1},r^2\right]q\right]\right\}_x\\\notag
&\quad\quad
\lesssim\epsilon\left(\left\|\partial_t^{k+1}q\right\|^2+\left\|\partial_t^{k+1}\partial_x\left(r^2q\right)\right\|^2\right)+\left\|\partial_t^{k}\partial_x^2(r^2q) \right\|^2+\vertiii{\left(DP,Ds\right)}^2_{k}+\vertiii{u}_{k+1}+\vertiii{(q,q_x)}_{k}^2.
\end{align}
Now we have completed our proof of Lemma \ref{m>2}.
\end{proof}	
\end{lemma}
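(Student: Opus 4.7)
The plan is to proceed by induction on $m$, the base case $m=1$ being supplied by Lemma \ref{1jie}. Assuming \eqref{m} for $m=k$ with $k\geq 2$, I must upgrade it to $m=k+1$, which amounts to controlling all derivatives $\partial_t^i\partial_x^j$ with $i+j=k+1$ of $(P,u,s,q,q_x)$ together with the corresponding dissipation integrals. The skeleton mirrors that of Lemma \ref{1jie}, but is inflated by $\partial_t^k$; the difficulty lies in the fact that the geometric factor $r=r(t,x)$ has non-small derivatives, so every commutator produced by applying $\partial_t^k$ to the quasilinear coefficients must be handled by hand via Lemma \ref{inequality}.

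For the mixed-derivative piece, I would apply $\partial_t^k\partial_x$ to $\eqref{cns5}_{1,2,3}$ and $\partial_t^k$ to $r^2\cdot\eqref{cns5}_4$, pair them with the weights indicated in \eqref{Pusqtkx}, and integrate over $\mathbb{I}$. Integration by parts produces the time derivative of a positive definite quadratic form in $\partial_t^k\partial_x(P,r^2u,s)$, a positive dissipation $\|\partial_t^k(r^4P_x+r^4s_x)\|^2$, and remainder terms that split into (i) commutators like $[\partial_t^k,\rho(P+1)](r^2u)_x$ and $[\partial_t^k\partial_x,1/\theta](r^2q)_x$, and (ii) nonlinear source contributions $\partial_t^k\partial_x(2ru^2)$. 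All of these can be bounded by $\epsilon\|\partial_t^k\partial_x(P,u,s)\|^2+\vertiii{(P,u,s,q,q_x)}_k^2$ using the three parts of Lemma \ref{inequality}; whenever the extremal time derivative $\partial_t^{k+1}P$ or $\partial_t^{k+1}s$ intrudes through a commutator (which happens in the $i=k$ slot of the Leibniz expansion) I would substitute it by means of $\eqref{cns5}_1$ and $\eqref{cns5}_3$ before taking $L^\infty$ norms. Boundary contributions vanish because $r^4P_x=2ru^2-(r^2u)_t=0$ on $\partial\mathbb{I}$ by virtue of the compatibility conditions \eqref{compatible}. To supply the dissipation of $\partial_t^k\partial_x(r^2u)$ and $\partial_t^k(r^4P_x)$ that is absent from this first identity, I would add the two auxiliary bounds obtained from testing $\partial_t^k(\eqref{cns5}_1+\eqref{cns5}_3)$ against $\partial_t^k\partial_x(r^2u)$ and from \eqref{tku} against $\partial_t^k(r^4P_x)$, then form a hierarchical linear combination with parameters $\epsilon\ll\delta_6\ll\delta_7\ll 1$.

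Once the mixed estimate for $(P,u,s)$ is secured, the remaining pure spatial derivatives $\partial_t^{k-i}\partial_x^{i+1}(P,u)$ with $i\geq 1$ as well as $\partial_t^{k+1}(P,u)$ are read off algebraically by applying $\partial_t^{k-i}\partial_x^i$ to $\eqref{cns5}_{1,2}$ and isolating the highest term. For the entropy, all mixed derivatives $\partial_t^{k+1-i}\partial_x^i s$ with $i\leq k$ come directly from $\eqref{cns5}_3$ and the induction hypothesis combined with the $(q,q_x)$ bounds; the sole missing slot $\partial_x^{k+1}s$ is recovered by testing $\partial_x^{k+1}\eqref{cns5}_3$ against $\partial_x^{k+1}s$ and using $\partial_x^k\bigl(\tfrac{1}{r^2\rho^2}\eqref{cns5}_4\bigr)$ to eliminate $\partial_x^{k+2}(r^2q)$ in favor of a coercive $\partial_x^{k+1}s$ plus controllable lower-order data, as in \eqref{xk+1s}. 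Finally, the $(q,q_x)$ estimates follow from the elliptic equation $\eqref{cns5}_4$: applying $\partial_t^{k-i}\partial_x^i$ for $0\leq i\leq k$ and solving for $\partial_t^{k-i}\partial_x^{i+2}(r^2q)$ covers all cases except the purely time-derivative slots $\|\partial_t^{k+1}q\|$ and $\|\partial_t^{k+1}\partial_x q\|$, which I would capture by applying $\partial_t^{k+1}$ to $\eqref{cns5}_4$ and testing against $\partial_t^{k+1}q$, isolating the coercive pair $\|\partial_t^{k+1}q\|^2+\int\rho^2[\partial_t^{k+1}\partial_x(r^2q)]^2\,dx$ and absorbing the remainder into the already-controlled quantities.

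The main obstacle is the one flagged by the authors after \eqref{cns11}: the Lagrangian radial variable $r$ has $r_x=1/(r^2\rho)\not\to 0$, so commutators $[\partial_t^k\partial_x,r^4]$, $[\partial_t^k,\rho(P+1)]$, and $[\partial_x^{k+1},1/\theta]$ never enjoy a smallness prefactor from $r$ itself and must be dissected via the product/composition inequalities of Lemma \ref{inequality} while routing every occurrence of the top-order time derivatives $\partial_t^{k+1}P,\partial_t^{k+1}s$ through the evolution equations $\eqref{cns5}_{1,3}$ so that only derivatives already controlled by the induction hypothesis or by the current-step dissipation appear on the right-hand side. Keeping the bookkeeping of these commutators consistent with the norm $\vertiii{\cdot}_{k+1}$ is the technical heart of the induction.
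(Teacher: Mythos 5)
Your proposal follows essentially the same inductive strategy as the paper: the same multipliers in \eqref{Pusqtkx}, the same auxiliary dissipation estimates from testing against $\partial_t^k\partial_x(r^2u)$ and $\partial_t^k(r^4P_x)$ combined hierarchically with $\epsilon\ll\delta_6\ll\delta_7\ll1$, the same recovery of $\partial_x^{k+1}s$ by substituting the elliptic relation from $\eqref{cns5}_4$, and the same closure of the $q$ estimates by applying $\partial_t^{k+1}$ to $\eqref{cns5}_4$ and testing against $\partial_t^{k+1}q$. You also correctly identify the central technical point (the non-small $r_x$ forcing careful commutator bookkeeping via Lemma \ref{inequality} and rerouting $\partial_t^{k+1}P,\partial_t^{k+1}s$ through the evolution equations), so the argument is sound and matches the paper's route.
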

\begin{remark}
In the Lemma \ref{m>2}, we prove \eqref{m} when $m\geq2$. In this more general case, we can see that the essence of proof is to obtain the estimates for $(\partial_t^{m-1}\partial_xP,\partial_t^{m-1}\partial_xu,\partial_t^{m-1}\partial_xs)$.
\end{remark}
\begin{remark}
	The reason why we use the complicated multipliers in \eqref{Pusqx} and \eqref{Pusqtkx} such as $\partial_t^k\left[\rho\left(P+1\right)\left(r^2u\right)_x\right]$ rather than simply $\partial_t^k\partial_xu$ is to overcome the difficulty that $r_x=\frac{1}{r^2\rho}$ is not small. Otherwise the terms like $\partial_x\left(\textstyle\frac{2}{r\rho}u\right)P_x$ and $\partial_t^k\partial_x\left[\frac{2}{r}\left(P+1\right)u\right]\partial_t^k\partial_xP$ would be hard to cope with. So to avoid the emergence of $\partial_x^kr$, we treat $(r^2u)$ as a whole and this strategy will be utilized multiple times in the following content \ep{refer to \eqref{barq} and \eqref{UkQk}}.
\end{remark}

 \section{local existence}\label{local}
In this section, we shall establish the local existence of the solution to initial boundary problem \eqref{cns5},\eqref{initial},\eqref{bdry} by iteration method. The main results are presented as follows.
\begin{theorem}\label{localexistence}
	If the initial data satisfies \eqref{initialdata} and compatible conditions \eqref{compatible}, then there exist some $T_0>0$ and $K_3\{\epsilon_0\}>0$ such that the system \eqref{cns5},\eqref{initial},\eqref{bdry} admit a unique classical solution
	$\left(P,u,s,q\right)$, which satisfies the regularity that
	\begin{align}
		(P,u,s,q,q_x)\in B_m\left([0,T_0];\mathbb{I};K_3\{\epsilon_0\}\right).
	\end{align}
\end{theorem}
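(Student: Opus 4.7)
The plan is to establish local existence by an iteration scheme that decouples the nonlinear dependence of $r$ on $\rho$ (the main obstruction flagged in the introduction). First I would construct a sequence $\{(P^n,u^n,s^n,q^n)\}_{n\ge 0}$ by taking $(P^0,u^0,s^0,q^0)$ as the time-independent extension of the initial data (with $q^0$ solved from the elliptic equation at $t=0$, using the inductive definition of the initial time-derivatives described before Theorem \ref{thm}). Given the $n$-th iterate, I would set $\bar\rho^n,\bar\theta^n$ from $(P^n,s^n)$ via the state relations \eqref{transform}, and define $\bar r^n(t,x)$ by integrating the ODE $(\bar r^n)_x=1/((\bar r^n)^2\bar\rho^n)$ with $\bar r^n(t,0)=a$. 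The $(n+1)$-st iterate is then defined as the solution of the linear mixed hyperbolic-elliptic system obtained from \eqref{cns5} by replacing every coefficient factor $\rho,\theta,r$ appearing in front of $(r^2u)_x,(r^2q)_x,(r^2q)_{xx},P_x,s_x,r^4$ by its barred counterpart while keeping $(P^{n+1},u^{n+1},s^{n+1},q^{n+1})$ as unknowns. Because the principal part of the first three equations is symmetrizable (as exploited already in Lemma \ref{m=0}) and the elliptic operator for $q$ has coefficient $(\bar r^n)^2(\bar\rho^n)^2$ bounded away from $0$, each linear problem is uniquely solvable in $X_m([0,T_0];\mathbb{I})$ by Friedrichs' theory for the hyperbolic block and Lax--Milgram for the elliptic block, with regularity inherited from the barred coefficients.

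Next I would derive uniform bounds. The key step is to mimic the energy estimates of Section \ref{a priori estimates} at the linear level: one shows that if $(P^n,u^n,s^n,q^n,q^n_x)\in B_m([0,T_0];\mathbb{I};K)$ with $K=K_3\{\epsilon_0\}$, then the same bound holds for the $(n+1)$-st iterate on a common time interval $[0,T_0]$ with $T_0=T_0(\epsilon_0)$. To avoid $\partial_x^k\bar r^n$ terms (which are not small because $(\bar r^n)_x=1/((\bar r^n)^2\bar\rho^n)$ is of order one), I would follow the strategy flagged in the remark after Lemma \ref{m>2} and use the grouped multipliers $\partial_t^k[(\bar r^n)^4P_x]$, $\partial_t^k[\bar\rho^n(\bar P^n+1)((\bar r^n)^2u)_x]$, treating $(r^2u)$ and $(r^2q)$ as single unknowns. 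The compatibility conditions \eqref{compatible} guarantee that $\partial_t^ku^{n+1}(t,\cdot)$ vanishes on $\partial\mathbb{I}$ for $0\le k\le m$, which is needed to eliminate boundary terms from the integration by parts that produces the dissipation of $(P_x,s_x)$; propagation of these boundary conditions along the iteration follows from the fact that $(\bar u^n,\bar q^n)$ already satisfy them.

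To close the iteration I would then establish convergence in a weaker norm. Setting $(\delta P^n,\delta u^n,\delta s^n,\delta q^n):=(P^{n+1}-P^n,u^{n+1}-u^n,s^{n+1}-s^n,q^{n+1}-q^n)$ and subtracting consecutive linearized systems produces a linear system for the $\delta$-variables with zero initial data and source terms that are bilinear in the previous $\delta$-variables and the uniformly bounded iterates. A standard $X_1([0,T_0];\mathbb{I})$ energy estimate (or even an $L^2_{t,x}$ estimate when $m$ is large) yields $\vertiii{\delta^n}_1\le C T_0\vertiii{\delta^{n-1}}_1$, so after possibly shrinking $T_0$ the sequence is contractive in the weaker norm. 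The uniform bound in $B_m$ together with compactness then produces a limit $(P,u,s,q)\in B_m([0,T_0];\mathbb{I};K_3\{\epsilon_0\})$ satisfying \eqref{cns5}, and uniqueness follows from the same contraction argument applied to two candidate solutions.

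The main obstacle will be the coupling through $\bar r^n$: because $\bar r^n$ is obtained from $\bar\rho^n$ by solving a nonlinear ODE in $x$, I must show that the map $\bar\rho^n\mapsto\bar r^n$ is Lipschitz in the right topology and gains the needed regularity, so that $\vertiii{\bar r^n-1}_m$ is controlled by $\vertiii{\bar\rho^n-1}_m$ plus a geometric contribution from the annulus, uniformly in $n$. Once this is in hand, the estimates involving $\bar r^n$ can be absorbed into the same $\epsilon$-small or lower-order error terms that appeared in the a priori estimates, and the rest of the scheme proceeds by the routine contraction-plus-compactness argument outlined above.
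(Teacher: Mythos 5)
Your overall framework (linearize around barred coefficients, solve the linear mixed hyperbolic--elliptic system, uniform bounds in the high norm, contraction in the low norm) is the same skeleton the paper uses. There are, however, two places where your plan departs from the paper, one of which is a genuine gap.

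The genuine gap is the choice of starting iterate. You propose to take $(P^0,u^0,s^0,q^0)$ as the time-independent extension of the initial data. The paper explicitly rules this out, and for a concrete reason: the linearized IBVP for $[u^{n+1}]$ needs \emph{its own} compatibility conditions, namely that the quantities $\partial_t^k[u^{n+1}](0,x)$ computed from the linearized system with the $n$-th iterate's coefficients vanish on $\partial\mathbb{I}$. These agree with the correct values $\partial_t^k u(0,x)$ (and hence vanish on $\partial\mathbb{I}$ by \eqref{compatible}) only if the barred coefficients have the correct time-derivatives at $t=0$ up to order $m$. With the time-independent extension, $\partial_t^k P^0(0)=\partial_t^k u^0(0)=\partial_t^k s^0(0)=0$ for $k\geq 1$, which does \emph{not} match $\partial_t^k P(0),\partial_t^k u(0),\partial_t^k s(0)$; consequently the first iterate need not satisfy the compatibility conditions and the scheme breaks. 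The paper resolves this in Lemma \ref{lemma1}: since the target time-traces $\partial_t^k P(0)\in H^{m-k}(\mathbb{I})$ are one half derivative short of what Lemma \ref{giveninitialvalues} requires, it manufactures a starting iterate by solving a linear system of the form \eqref{startingsystem} with inhomogeneous forcing terms $R_1,R_2,R_3$ whose time-traces live in $H^{m-k}\subset H^{m-k-1/2}$, and then the structure of that linear system recovers the missing half derivative. Your proposal is silent on this, and the sentence claiming that propagation of the boundary conditions ``follows from the fact that $(\bar u^n,\bar q^n)$ already satisfy them'' conflates boundary values of the coefficients with the compatibility conditions of the next iterate (which depend on \emph{all} barred quantities through the linearized equations, not just $\bar u$ and $\bar q$ at the boundary).

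The second departure is the definition of $\bar r^n$. You solve the spatial ODE $(\bar r^n)_x=1/((\bar r^n)^2\bar\rho^n)$ with $\bar r^n(t,0)=a$, whereas the paper uses the temporal integral $\bar r = r_0(x)+\int_0^t\bar u\,\mathrm{d}\tau$ (definition \eqref{barrdefinition}) and explicitly notes that the spatial relation $r_x=1/(r^2\rho)$ is \emph{not} available in the linearized system because it rests on the nonlinear mass-conservation law. Your definition is not obviously wrong (it actually sidesteps the $\partial_x^{m+1}\bar u$ issue the paper handles by treating $\bar r^2 q$ as a block), but it leaves an unaddressed task at the fixed point: you must check that the limit $r$ also satisfies $r_t=u$, since that relation is what makes the system equivalent to the physical one. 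This can be recovered from the derived mass-conservation identity together with $r(t,0)=a$ and $u(t,0)=0$, but it is a verification your plan omits. By contrast the paper's temporal definition makes $\bar r_t=\bar u$ hold by construction and keeps $\bar r$ aligned with the Lagrangian interpretation throughout the iteration.
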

In the beginning, we provide a helpful lemma for later use.
\begin{lemma}[\text{Theorem 2.5.7 in \cite[p.~55]{MR0161012}}]\label{giveninitialvalues}
	For arbitrary $f_k(\bm{x})\in H^{m-k-\frac 12}(\mathbb{R}^n)$, $k=0,\cdots,m-1$, there exists a function $u(t,\bm{x})\in H^m([0,+\infty)\times\mathbb{R}^n)$ with $\partial_t^ku(0,\bm{x})=f_k(\bm{x})$, $k=0,\cdots,m-1$.
\end{lemma}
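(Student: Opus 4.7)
The plan is to construct the extension $u$ explicitly via partial Fourier transform in the spatial variables $\bm{x}$, following the standard Fourier-analytic argument for surjectivity of the trace map on the half-space Sobolev scale. Throughout, write $\lambda(\xi):=(1+|\xi|^2)^{1/2}$, and for a function $g(\bm{x})$ denote by $\widehat g(\xi)$ its Fourier transform with respect to $\bm{x}$.

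First I would fix scalar cutoffs $h_0,\ldots,h_{m-1}\in C_c^\infty([0,+\infty))$ such that $h_k^{(j)}(0)=\delta_{jk}$ for all $0\leq j,k\leq m-1$. Such functions exist by finite-dimensional Hermite interpolation: starting from $s^k\chi(s)/k!$ with $\chi\in C_c^\infty$ and $\chi\equiv 1$ near the origin, one performs a triangular change of basis to kill the unwanted Taylor coefficients at $s=0$. With these in hand, I would define
$$\widehat u(t,\xi):=\sum_{k=0}^{m-1}h_k\bigl(t\lambda(\xi)\bigr)\,\lambda(\xi)^{-k}\,\widehat f_k(\xi),\qquad t\geq 0,\ \xi\in\mathbb{R}^n,$$
and let $u(t,\bm{x})$ be its inverse Fourier transform in $\bm{x}$. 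The purpose of the factor $\lambda(\xi)^{-k}$ is to absorb the $\lambda(\xi)^j$ that emerges whenever $\partial_t^j$ falls on $h_k(t\lambda(\xi))$.

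The verification then splits into two pieces. For the trace condition, direct differentiation gives $\partial_t^j\widehat u(0,\xi)=\sum_{k=0}^{m-1}h_k^{(j)}(0)\,\lambda(\xi)^{j-k}\,\widehat f_k(\xi)=\widehat f_j(\xi)$ by the prescribed values, so $\partial_t^j u(0,\bm{x})=f_j(\bm{x})$ for $0\leq j\leq m-1$. For the regularity, fix any $(j,\alpha)$ with $j+|\alpha|\leq m$: Plancherel in $\bm{x}$ together with the substitution $s=t\lambda(\xi)$, which integrates out $t$ at the cost of a factor $\lambda(\xi)^{-1}$, yields
$$\int_0^\infty\!\!\int_{\mathbb{R}^n}|\partial_t^j\partial_x^\alpha u|^2\,d\bm{x}\,dt\ \lesssim\ \sum_{k=0}^{m-1}\int_{\mathbb{R}^n}|\widehat f_k(\xi)|^2\,\lambda(\xi)^{2(|\alpha|+j-k)-1}\,d\xi\ \lesssim\ \sum_{k=0}^{m-1}\|f_k\|_{H^{m-k-1/2}(\mathbb{R}^n)}^2,$$
where the last step uses $|\alpha|+j-k-\tfrac12\leq m-k-\tfrac12$ together with the $L^2(\mathbb{R}_+)$-boundedness of $h_k^{(j)}$ (compact support).

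The only real obstacle is the careful choice of the auxiliary functions $h_k$ and the routine bookkeeping needed to sum over all $(j,\alpha)$ with $j+|\alpha|\leq m$; there is no nonlinear or geometric difficulty. The construction furnishes a bounded linear right inverse of the trace map, which is precisely the content of the stated lemma.
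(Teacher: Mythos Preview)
Your argument is correct and is essentially the standard Fourier-analytic construction of a right inverse to the trace operator. Note, however, that the paper does not supply its own proof of this lemma: it is stated with a direct citation to Theorem~2.5.7 of H\"ormander's \emph{Linear Partial Differential Operators}, and no argument is given in the text. Your proof is precisely the one found in that reference (up to cosmetic choices in the cutoff functions $h_k$), so there is no genuine methodological difference to compare.
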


In order to construct the successive approximation sequence in the iteration process, we need to first consider the linearized system.
\subsection{Solution to the linearized system}
The desired linearized system takes the form as
\begin{equation}
\label{cns9}
\left\{	
\begin{aligned}
&P_t+\frac{C_v+1}{C_v}(\widebar{P}+1)\bar{\rho}\bar{r}^2u_x+\frac{C_v+1}{C_v}(\widebar{P}+1)\bar{\rho}2\bar{r}\bar{u}u+\frac{1}{C_v}\bar{\rho}(\bar{r}^2q)_x=0,\\[2mm]
&u_t+\bar{r}^2P_x=0,\\[2mm]
&s_t+\frac{(\bar{r}^2q)_x}{\raisebox{-0.5mm}{$\bar{\theta}$}}=0,\\
&q-\bar{r}^2\bar{\rho}^2(\bar{r}^2q)_{xx}+\frac{4\bar{r}^2\bar{\theta}^3}{C_v+1}\left(\widebar{P}_x+\bar{s}_x\right)=\frac 1 2\bar{r}^2(\bar{\rho}^2)_x(\bar{r}^2q)_x-\frac{4\bar{r}^2\bar{\theta}^3}{C_v+1}\widebar{P}\bar{s}_x,
\end{aligned}	
\right.
\end{equation}
with corresponding initial boundary conditions that
\begin{alignat}{2} \label{linitial}
(P,u,s)(0,x)&=
(P_0-1,u_0,s_0-1)(x),
\qquad && x\in\mathbb{I},\\[1mm]               \label{lbdry}
(u,q)(t,0)&=(u,q)(t,1)=0,\qquad&& t\geq 0,
\end{alignat}
Here $\widebar{P}(t,x),\ \bar{u}(t,x),\ \bar{s}(t,x)$ are given and
\begin{equation}\label{bar}
\left\{
\begin{aligned}
&\left|\left(\widebar{P},\bar{u},\bar{s}\right)\right|^2_{L^\infty}\lesssim\sup_{0\leq t\leq T}\left[\vertiii{\left(\widebar{P},\bar{u},\bar{s}\right)(t)}^2_{m-1}+\vertiii{\left(\widebar{P},\bar{u},\bar{s}\right)(t)}^2_{m,tan}\right]\leq K_1\{\epsilon_0\},\\[2mm]
&\vertiii{\left(\widebar{P},\bar{u},\bar{s}\right)}_{m,T}\leq K_2\{\epsilon_0\}.
\end{aligned}
\right.
\end{equation}
It is clear that $\bar{\rho},\bar{\theta}$ are also known, since they can be expressed by $\widebar{P},\bar{s}$ \ep{refer to \eqref{transform}}. We define $\bar{r}$ as that in \eqref{coordinate}.
\begin{align}\label{barrdefinition}
	\bar{r}=r_0(x)+\int_{0}^t\bar{u}(\tau,x){\rm d}\tau.
\end{align}
Especially, we need to point it out that other than \eqref{r_eq}, the derivatives of $\bar{r}$ take the form that
\begin{align*}
	\bar{r}_t=\bar{u},\qquad\bar{r}_x=\frac{1}{r_0^2\rho_0}+\int_0^t\bar{u}_x(\tau,x)\ {\rm d}\tau.
\end{align*}
The reason is that \eqref{r_eq} comes from the nonlinear mass-conservation equation \eqref{mass-conservation} and definition \eqref{coordinate}, but the former one is not valid in linearized system. With \eqref{bar}, we can see that $\bar{r}$ has positive upper and lower bounds when $\epsilon_0$, $K_1\{\epsilon_0\}$, $T$ are small and fixed. Moreover $\vertiii{\bar{r}}_{m,T}$ has a upper bound concerning $\epsilon_0$, $\mathbb{I}$, $T$ and $K_2\{\epsilon_0\}$.
\begin{align}\notag
	\vertiii{\bar{r}(t)}_{m}\leq& \left\|r_0\right\|_{H^{m+1}}\cdot\big\lvert\,\mathbb{I}\,\big\lvert+\vertiii{u(t)}_{m-1}+T\cdot\vertiii{u}_{m,T}\\\label{barr}
	\leq &\left\|r_0\right\|_{H^{m+1}}\cdot\big\lvert\,\mathbb{I}\,\big\lvert+\vertiii{u(0)}_{m-1}+2T\cdot\vertiii{u}_{m,T}
	\leq C_2\{\epsilon_0,\mathbb{I}\}+2T\cdot K_2\{\epsilon_0\}.
\end{align}
\ep{Here, $\big\lvert\,\mathbb{I}\,\big\lvert$ denotes the measure of domain $\mathbb{I}$. Since our region $\mathbb{I}$ is fixed, it won't appear afterwards in our proof as a variable.} To prove \eqref{barr}, since $\bar{r}_t=\bar{u}$, we just need to consider the case when only spatial derivatives are involved and with Minkowski's integral inequality, it can be easily proved. The detailed proof is omitted here. Now we start to work on the existence of solutions to such linearized system and the main results are presented in the following Theorem.
\begin{theorem}[solution to the linearized system]\label{linearizedsolution}
	The initial boundary problem \eqref{cns9}--\eqref{lbdry} has a unique solution
	$(P,u,s,q)$ satisfying $(P,u,s,q,q_x)\in X_m\left([0,T];\mathbb{I}\right)$ for any given $T>0$, which is provided that 	
	\begin{itemize}
		\item[\ep{i}] The known functions $\left(\widebar{P},\bar{u},\bar{s}\right)$ satisfy \eqref{bar};
		\item[\ep{ii}] $F\in X_m([0,T],\mathbb{I})$ and $(P_0-1,u)\in H^m(\mathbb{I})$;
		\item[\ep{iii}] The compatibility conditions that $\partial_t^ku(0,x)=0$ on $\partial\mathbb{I}$ hold.
	\end{itemize}
\end{theorem}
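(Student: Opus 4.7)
The strategy is to exploit the partial decoupling present in \eqref{cns9}. Since $\widebar P,\bar u,\bar s$ (and hence $\bar\rho,\bar\theta,\bar r$) are prescribed, the fourth equation is a linear elliptic ODE in $x$ whose entire right-hand side is known data, so $q$ is solved first; the third equation then produces $s$ by direct time integration; finally, the first two equations form a linear symmetric hyperbolic system for $(P,u)$ driven by the just-constructed $q$. Once each step is carried out, the bound $\vertiii{(P,u,s,q,q_x)}_{m,T}\le K_3\{\epsilon_0\}$ follows by combining the resulting estimates together with Lemma \ref{inequality} and Lemma \ref{giveninitialvalues}.

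\textbf{Step 1 (elliptic construction of $q$).} Setting $Q:=\bar r^2 q$, the fourth equation of \eqref{cns9} becomes, at each fixed $t\in[0,T]$, the one-dimensional linear ODE
\begin{equation*}
-\bar r^4\bar\rho^2\,Q_{xx}-\tfrac12\bar r^4(\bar\rho^2)_x\,Q_x+Q=G(t,x),\qquad Q(t,0)=Q(t,1)=0,
\end{equation*}
where $G$ depends only on $\widebar P_x,\bar s_x,\widebar P,\bar r,\bar\theta$. By \eqref{bar} and the bound \eqref{barr}, for small $\epsilon_0$ and small $T$ the coefficient $\bar r^4\bar\rho^2$ is uniformly positive, so Lax--Milgram plus elliptic bootstrapping yields a unique $Q(t,\cdot)\in H^{m+1}(\mathbb I)$. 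Time regularity is obtained by induction: applying $\partial_t^k$ produces an elliptic ODE of the same form for $\partial_t^k Q$ whose source involves only time derivatives of the barred quantities and lower-order time derivatives of $Q$. This gives $(q,q_x)\in X_m([0,T];\mathbb I)$.

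\textbf{Steps 2 and 3 (determining $s$ and $(P,u)$).} With $q\in X_m$ in hand, the third equation integrates directly, $s(t,x)=(s_0-1)(x)-\int_0^t\bar\theta^{-1}(\bar r^2q)_x\,{\rm d}\tau$, giving $s\in X_m$ by Minkowski's integral inequality. Multiplying the first equation of \eqref{cns9} by $\tfrac{C_v}{(C_v+1)(\widebar P+1)\bar\rho}$ and pairing it with the second, the $(P,u)$-subsystem becomes symmetric hyperbolic, with symmetrizer $A_0=\mathrm{diag}\bigl(\tfrac{C_v}{(C_v+1)(\widebar P+1)\bar\rho},\,1\bigr)>0$, spatial coefficient matrix having off-diagonal entry $\bar r^2$, and source depending on the already-constructed $q$ and on $\bar u$. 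The boundary condition $u(t,0)=u(t,1)=0$ is maximally dissipative since the boundary quadratic form $2\bar r^2 Pu$ vanishes on $\partial\mathbb I$. Classical theory for linear symmetric hyperbolic initial-boundary value problems on a bounded interval then yields a unique $(P,u)\in X_m([0,T];\mathbb I)$ for every finite $T>0$, provided the compatibility conditions \eqref{compatible} hold; Lemma \ref{giveninitialvalues} is used to realize the inductively constructed initial data for the time derivatives. Iterating Friedrichs-type energy estimates on $\partial_t^k$-derivatives and trading them against $x$-derivatives through the equations quantifies $K_3\{\epsilon_0\}$.

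\textbf{Main obstacle.} As flagged in the remarks around \eqref{barrdefinition}, the linearized problem loses the algebraic identity $\bar r_x=1/(\bar r^2\bar\rho)$ available for the true Lagrangian coordinate \eqref{r_eq}: only $\bar r_t=\bar u$ is preserved, and $\bar r_x$ must be handled through $\bar r_x=1/(r_0^2\rho_0)+\int_0^t\bar u_x\,{\rm d}\tau$. Consequently $\bar r_x$ and its higher derivatives are \emph{not} small, and they appear in essentially every commutator and product estimate. The remedy is the one used in Section~\ref{a priori estimates}: keep $\bar r^2 u$ and $\bar r^2 q$ grouped as units before differentiating, so that unfavorable $\bar r_x$-factors are absorbed into bounded coefficients rather than exposed. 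This is the main bookkeeping obstacle throughout Steps~1 and~3.
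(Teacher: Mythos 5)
Your proposal follows essentially the same route as the paper's proof: first solve the elliptic fourth equation for $q$ (working with $Q=\bar r^2q$ to avoid exposing $\bar r_x$), then integrate the third equation in time to get $s$, then rewrite the first two equations as a linear symmetric hyperbolic system for $(P,u)$ with a diagonal positive symmetrizer and maximally dissipative boundary condition, invoking classical theory for the IBVP and closing the argument with energy estimates plus Lemma \ref{inequality}. The only cosmetic difference is the normalization of the symmetrizer: the paper also divides both rows by $\bar r^2$ so that the spatial coefficient matrix $B$ becomes the constant $\left(\begin{smallmatrix}0&1\\1&0\end{smallmatrix}\right)$, whereas you keep $B$ with off-diagonal entry $\bar r^2$; the paper's choice spares you from differentiating $B$ in the higher-order energy estimates, which is a small but real simplification given that $\bar r_x$ is not small, though your variant is also workable.
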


First, consider the last equation in \eqref{cns9}. It is a standard second-order linear elliptic equation with Dirichlet boundary conditions provided that $K_2\{\epsilon_0\}$ in \eqref{bar} is sufficiently small. The solution to such problem is quite clear (e.g. see for chapter 6 in \cite{MR2597943}) and satisfies that
\begin{align}\label{barq}
	 \vertiii{\Big(q,(\bar{r}^2q)_x\Big)(t)} _{m}^2\leq G_1\big\{\epsilon_0,T,K_1\{\epsilon_0\}\big\}\vertiii{\left(\widebar{P},\bar{s}\right)(t)}^2_m.
\end{align}
The estimates \eqref{barq} can be obtained just like what we did in the a priori estimates with respect to $q$ in the nonlinear system. One thing we want to point out here is that due to the definition \eqref{barrdefinition}, $\partial_x^{m+1}\bar{u}$ would appear when we compute $\partial_x^{m}(\bar{r}^2q)_x$, which may not be bounded in $L^2-$norm. And that is why we treat $\bar{r}^2q$ as a whole.
Now since $q$ is solved, $s$ can be easily obtained by integrate \eqref{cns9} over $[0,t]$, which is
\begin{align*}
s(t,x)=s_0(x)+\int_{0}^{t}\frac{(\bar{r}^2q)_x}{\raisebox{-0.5mm}{$\bar{\theta}$}}(s,x)\ {\rm d}s.
\end{align*}
With clear expression of $s$, we can get the following estimates that
\begin{align}\notag
&\vertiii{s(t)}^2_{m,tan}\leq	\left\|\partial_t^{m-1}\left(\frac{(\bar{r}^2q)_x}{\raisebox{-0.5mm}{$\bar{\theta}$}}\right)(t)\right\|^2
\lesssim\vertiii{\frac{(\bar{r}^2q)_x}{\raisebox{-0.5mm}{$\bar{\theta}$}}(t)}_{m-1}^2\ \ \
\mathrel{\overset{\makebox[0pt]{\mbox{\scriptsize\sffamily Lemma \ref{inequality}\ep{1}}}}{\lesssim}}
\qquad
\vertiii{\frac{1}{\raisebox{-0.5mm}{$\bar{\theta}$}}(t)}^2_{m-1}\vertiii{(\bar{r}^2q)_x(t)}^2_{m-1}\\\notag
&\qquad\qquad\,
\lesssim\left(\vertiii{\frac{1}{\raisebox{-0.5mm}{$\bar{\theta}$}}(0)}^2_{m-1}
+T\cdot\vertiii{\frac{1}{\raisebox{-0.5mm}{$\bar{\theta}$}}}_{m,T}^2\right)
\left[\vertiii{(\bar{r}^2q)_x(0)}^2_{m-1}+T\cdot\vertiii{(\bar{r}^2q)_x}^2_{m,T}\right]\\\label{smtan}
&\qquad\qquad\,
\leq\Big(G_{2}\{\epsilon_0\}+T\cdot G_{3}\big\{K_2\{\epsilon_0\}\big\}\Big)
\Big(G_{4}\{\epsilon_0\}\epsilon_0
+T\cdot G_1\big\{\epsilon_0,T,K_1\{\epsilon_0\}\big\}\left(K_2\{\epsilon_0\}\right)^2\Big),\\[2mm]\notag
&\vertiii{s(t)}^2_{k}\lesssim
\left\|s_0\right\|^2_{H^m}\!\!+
\vertiii{\frac{1}{\raisebox{-0.5mm}{$\bar{\theta}$}}}^2_{m-1}\!\!\vertiii{(\bar{r}^2q)_x}^2_{m-1}\!\!+\!\!
\int_{0}^t \vertiii{\frac{1}{\raisebox{-0.5mm}{$\bar{\theta}$}}(\tau)}^2_{m}\!\!\vertiii{(\bar{r}^2q)_x(\tau)}^2_{m}{\rm d}\tau.
\qquad(k=m-1,m)\\\notag
&\qquad\quad\
\lesssim\epsilon_0+\Big(G_{2}\{\epsilon_0\}+T\cdot G_{3}\big\{K_2\{\epsilon_0\}\big\}\Big)
\Big(G_{4}\{\epsilon_0\}\epsilon_0
+T\cdot G_1\big\{\epsilon_0,T,K_1\{\epsilon_0\}\big\}\left(K_2\{\epsilon_0\}\right)^2\Big)\\\label{sv}
&\qquad\quad\quad\ \
+T\cdot G_{5}\big\{K_2\{\epsilon_0\}\big\}G_1\big\{\epsilon_0,T,K_1\{\epsilon_0\}\big\}\left(K_2\{\epsilon_0\}\right)^2
\end{align}
As for $P$ and $u$, we take the first two equations in \eqref{cns9} as a individual system. Rewrite them in the form that
\begin{equation}\label{Pu}
\left(  \begin{matrix}
	\frac{C_v}{(C_v+1)\left(\widebar{P}+1\right)\bar{\rho}\bar{r}^2}&0\\
	0&\frac 1 {\bar{r}^2}
  \end{matrix}
  \right)\frac{\partial}{\partial_t}
\left(\begin{matrix}
P\\
u
\end{matrix}\right)+
\left(\begin{matrix}
 0&1\\
 1&0
\end{matrix}\right)\frac{\partial} {\partial_x}
\left(\begin{matrix}
P\\
u
\end{matrix}\right)+
\left(\begin{matrix}
0&\frac{2\bar{u}}{\bar{r}}\\
0&0
\end{matrix}\right)
\left(\begin{matrix}
P\\
u
\end{matrix}\right)=
\left(\begin{matrix}
-\frac{(\bar{r}^2q)_x}{(C_v+1)(\widebar{P}+1)\bar{r}^2}\\
0
\end{matrix}\right)
\end{equation}
Now for $U=(P,u)^T$, we define the corresponding linear partial differential operator that
\begin{align*}
	LU=AU_t+BU_x+CU,\qquad \text{where}\
	A=\left(  \begin{matrix}
	\frac{C_v}{(C_v+1)\left(\widebar{P}+1\right)\bar{\rho}\bar{r}^2}&0\\
	0&\frac 1 {\bar{r}^2}
	\end{matrix}
	\right),\
	B=\left(\begin{matrix}
	0&1\\
	1&0
	\end{matrix}\right),\
	C=\left(\begin{matrix}
	0&\frac{2\bar{u}}{\bar{r}}\\
	0&0
	\end{matrix}\right).
\end{align*}
Since $F:=\left(-\frac{(\bar{r}^2q)_x}{(C_v+1)(\widebar{P}+1)\bar{r}^2},0\right)^T$ is given, we can see that $LU=F$ is a linear symmetric hyperbolic system with initial boundary conditions
\begin{alignat}{2} \label{linitial3}
(P,u)(0,x)&=
(P_0-1,u_0)(x),
\qquad && x\in\mathbb{I},\\[1mm]               \label{lbdry3}
u(t,0)&=u(t,1)=0,\qquad&& t\geq 0.
\end{alignat}
The boundary condition \eqref{lbdry3} is maximally nonnegative according to \cite[p.~62]{MR834481} (or maximal dissipative refer to Definition 2.1.3 in \cite[p.~22]{MR2151414}). The local existence of solution to the initial boundary problem \eqref{Pu}-\eqref{lbdry3} is proved in the following theorem
\begin{lemma}[solution to the hyperbolic part]
	The initial boundary problem $LU=F$ with \eqref{linitial3} and \eqref{lbdry3} has a unique solution in $X_m([0,T],\mathbb{I})\quad(m\geq2)$, provided that 	
		\begin{itemize}
		\item[\ep{i}] The known functions $\left(\widebar{P},\bar{u},\bar{s}\right)$ satisfy \eqref{bar};
		\item[\ep{ii}] $F\in X_m([0,T],\mathbb{I})$ and $(P_0-1,u)\in H^m(\mathbb{I})$;
		\item[\ep{iii}] The compatibility conditions that $\partial_t^ku(0)=0$ on $\partial\mathbb{I}$ hold.
		\end{itemize}
The solution $U=(P,u)^T$ obeys the estimates that
\begin{align}\notag
&\vertiii{(P,u)(t)}_{m-1}^2+ \vertiii{(P,u)(t)}_{m,tan}^2
\leq \\[3mm]\label{put}
	&\qquad\quad\begin{multlined}
\Big(G_{6}\{\epsilon_0\}+T\cdot G_{7}\big\{\epsilon_0,T,K_1\{\epsilon_0\}\big\}\Big)
\exp\Big\{{t\cdot G_{8}\big\{\epsilon_0,T,K_2\{\epsilon_0\}\big\}}\Big\}
\left[\vertiii{(P,u)(0)}_{m-1}^2
\right.\\
+\left.
\vertiii{(P,u)(0)}_{m,tan}^2+
G_{9}\big\{\epsilon_0,T,K_2\{\epsilon_0\}\big\}\int_{0}^{t} \vertiii{(\bar{r}^2q)_x(s)}_{m}^2{\rm d}s\right],
	\end{multlined}
    	\\[3mm]\notag
	&\vertiii{\left(P,u\right)(t)}_m\leq G_{10}\big\{\epsilon_0,T,K_1\{\epsilon_0\}\big\}
	\left(1+\vertiii{\left(\widebar{P},\bar{u},\bar{s},\bar{r}\right)}_{m,T}^a\right)\cdot\\[1.5mm] \label{puv}
	&\qquad\qquad\qquad\qquad\qquad\qquad\quad\left[\vertiii{(P,u)(t)}_{m-1}
	+\vertiii{(P,u)(t)}_{m,tan}+\vertiii{(\bar{r}^2q)_x(t)}_{m-1}\right].
\end{align}
Here, constant $a$ satisfies $0<a<1$.
\begin{proof}
	As for the existence and uniqueness of solution, one can refer to Theorem A1 in Appendix A of \cite{MR834481} and the important reference \cite[Theorem 3.1]{MR340832} in it. Now it is left to prove \eqref{put} and \eqref{puv}, which can be derived from the following estimates.
	\begin{align}\notag
	&\frac{{\rm d}}{{\rm d}t} \int_{\mathbb{I}}\sum_{\substack{|\alpha|\leq m\\|\alpha|\leq m-1\ \text{or}\ \alpha_2=0}}
	\left[\frac{C_v}{2(C_v+1)\left(\widebar{P}+1\right)\bar{\rho}\bar{r}^2}\left(D^{\alpha}P\right)^2
	+\frac{1}{2\bar{r}^2}\left(D^{\alpha}u\right)^2\right]{\rm d}x\\\label{m-1mtan}
	&\qquad\qquad\qquad\quad\ \leq C\big\{\epsilon_0,T,K_2\{\epsilon_0\}\big\}\vertiii{(P,u)}^2_m+C\left\{\epsilon_0,T,K_2\{\epsilon_0\}\right\}\vertiii{\Big(q,(\bar{r}^2q)_x\Big)}_m,\\[4mm]\notag
	&\left\|\partial_t^{m-i}\partial_x^{i}\left(P,u\right)\right\|\leq C\big\{\epsilon_0,T,K_1\{\epsilon_0\}\big\}\left\|\partial_t^{m-i+1}\partial_x^{i-1}\left(P,u\right)\right\|
	+\epsilon_1\textstyle\sum_{i=1}^{m}\left\|\partial_t^{m-i}\partial_x^{i}\left(P,u\right)\right\|+\\[1.5mm]\notag
&\qquad\qquad\qquad\quad\ \  C\big\{\epsilon_0,T,K_1\{\epsilon_0\}\big\}
\left(1+\vertiii{\left(\widebar{P},\bar{u},\bar{s},\bar{r}\right)}_{m,T}^a\right)
\left[\vertiii{(P,u)(t)}_{m-1}
+\vertiii{(P,u)(t)}_{m,tan}\right.\\[1.5mm]\label{im-i}
&\qquad\qquad\qquad\quad\ \left.+\vertiii{\Big(q,(\bar{r}^2q)_x\Big)(t)}_{m-1}\right]\quad\left(1\leq i\leq m\right),\quad \text{for a small constant }\epsilon_1.
	\end{align}
	First, we will explain how to obtain \eqref{put}, \eqref{puv} by using \eqref{m-1mtan} and \eqref{im-i}. According to \eqref{im-i}, we can see that when $i=1$, $\left\|\partial_t^{m-1}\partial_x^{1}\left(P,u\right)\right\|$ is bounded by the right side of \eqref{puv} except for the term $\epsilon_1\textstyle\sum_{i=1}^{m}\left\|\partial_t^{m-i}\partial_x^{i}\left(P,u\right)\right\|$. Substitute it into \eqref{im-i} when $i=2$ and we have $\left\|\partial_t^{m-2}\partial_x^{2}\left(P,u\right)\right\|$ also does. Iteratively, it holds for all $1\leq i\leq m$. Add them up and we have $\sum_{i=1}^{m}\left\|\partial_t^{m-i}\partial_x^{i}\left(P,u\right)\right\|$ bounded by the right side of \eqref{puv}. The remaining terms in $\vertiii{(P,u)(t)}_m$ naturally satisfy \eqref{puv}, if we set $G_{10}\big\{\epsilon_0,T,K_1\{\epsilon_0\}\big\}>1$. To prove\eqref{put}, we substitute \eqref{puv} into the right side of \eqref{m-1mtan}, then use
	\begin{align}\label{1}
        \left|\frac{C_v}{2(C_v+1)\left(\widebar{P}+1\right)\bar{\rho}\bar{r}^2}\right|_{L^{\infty}},\ \left|\frac{1}{2\bar{r}^2}\right|_{L^{\infty}}
        &\leq C\{\epsilon_0\}+
        T\cdot C\big\{\epsilon_0,T,K_2\{\epsilon_0\}\big\},\\\label{-1}
		\left|\left(\frac{C_v}{2(C_v+1)\left(\widebar{P}+1\right)\bar{\rho}\bar{r}^2}\right)^{-1}\right|_{L^{\infty}},\ \left|\left(\frac{1}{2\bar{r}^2}\right)^{-1}\right|_{L^{\infty}}
		&\leq C\{\epsilon_0\}+
		T\cdot C\big\{\epsilon_0,T,K_2\{\epsilon_0\}\big\},
	\end{align}
	and $Gr\ddot{o}nwall's$ inequality and eventually we have \eqref{put}. To prove \eqref{1} here, we can see
	\begin{align*}
	\left\|\frac{C_v}{\left(C_v+1\right)\left(\widebar{P}+1\right)\bar{\rho}\bar{r}^2}\right\|_{C^k\left(\widebar{P},\bar{\rho},\bar{r}\right)}\leq C\big\{\epsilon_0,T,K_1\{\epsilon_0\}\big\}.
	\end{align*}
	And \eqref{1} follows from
	\begin{align*}
		&\quad\ \left|\frac{C_v}{2(C_v+1)\left(\widebar{P}+1\right)\bar{\rho}\bar{r}^2}\right|_{L^{\infty}},\ \left|\frac{1}{2\bar{r}^2}\right|_{L^{\infty}}
		\lesssim \sup_{0\leq t\leq T}
		\vertiii{\left(\frac{C_v}{2(C_v+1)\left(\widebar{P}+1\right)\bar{\rho}\bar{r}^2}\ ,\ \frac{1}{2\bar{r}^2}\right)(t)}_{m-1}\\
		&\lesssim\vertiii{\left(\frac{C_v}{2(C_v+1)\left(\widebar{P}+1\right)\bar{\rho}\bar{r}^2}\ ,\ \frac{1}{2\bar{r}^2}\right)(0)}_{m-1}+
		T\cdot\vertiii{\left(\frac{C_v}{2(C_v+1)\left(\widebar{P}+1\right)\bar{\rho}\bar{r}^2}\ ,\ \frac{1}{2\bar{r}^2}\right)}_{m,T}\\
		&\qquad\qquad\qquad\qquad\qquad\qquad\qquad\qquad\qquad
		\mathrel{\overset{\makebox[0pt]{\mbox{\scriptsize\sffamily Lemma \ref{inequality}\ep{3}}}}{\leq}}\qquad
		C\{\epsilon_0\}+
		T\cdot C\big\{\epsilon_0,T,K_2\{\epsilon_0\}\big\}.
	\end{align*}
Similarly, we can derive \eqref{-1}.	

	Second, we aim to prove \eqref{m-1mtan}. Apply $\partial_t^k(0\leq k\leq m)$ on \eqref{Pu}, multiply the resultant equality by $(\partial_t^kP,\partial_t^ku)$ and integrate over $\mathbb{I}$ to get
	\begin{align}\notag
	\frac{{\rm d}}{{\rm d}t}&
	 \int_{\mathbb{I}}\left[\frac{C_v}{2(C_v+1)\left(\widebar{P}+1\right)\bar{\rho}\bar{r}^2}\left(\partial_t^kP\right)^2
	+\frac{1}{2\bar{r}^2}\left(\partial_t^ku\right)^2\right]{\rm d}x\\\notag
	=&\int_{\mathbb{I}}\frac{{\rm d}}{{\rm d}t}\left[\frac{C_v}{2(C_v+1)\left(\widebar{P}+1\right)\bar{\rho}\bar{r}^2}\right]\left(\partial_t^kP\right)^2
	+\underbrace{\left[\partial_t^k, \frac{C_v}{\left(C_v+1\right)\left(\widebar{P}+1\right)\bar{\rho}\bar{r}^2}\right]\partial_tP\cdot\partial_t^kP}_{\circled{3}}+\\\notag
	&\partial_t^k\left(\frac{2\bar{u}}{\bar{r}}u\right)\cdot\partial_t^kP+\partial_t^k\left(\frac{(\bar{r}^2q)_x}{\left(C_v+1\right)(\widebar{P}+1)\bar{r}^2}\right)\cdot\partial_t^kP\ {\rm d}x
	+\int_{\mathbb{I}}\frac{{\rm d}}{{\rm d}t}\left(\frac{1}{2\bar{r}^2}\right)\left(\partial_t^ku\right)^2+ \\\notag
	&\left[\partial_t^k,\frac{1}{\bar{r}^2}\right]\partial_tu\cdot\partial_t^ku\ {\rm d}x\\\label{mtan}
	&\leq C\big\{\epsilon_0,T,K_2\{\epsilon_0\}\big\}\vertiii{(P,u)(t)}^2_m+C\big\{\epsilon_0,T,K_2\{\epsilon_0\}\big\}\vertiii{(\bar{r}^2q)_x(t)}_m.
	\end{align}
	We use $\circled{3}$ to illustrate how the estimates are done.
	\begin{align*}
		\int_{\mathbb{I}}\circled{3}\ {\rm d}x\leq&\left\|\partial_t^kP\right\|\sum_{\substack{|\alpha|+|\beta|=k\\1\leq|\alpha|\leq k\\0\leq|\beta|\leq k-1}}
		\left\|D^\alpha\left(\frac{C_v}{\left(C_v+1\right)\left(\widebar{P}+1\right)\bar{\rho}\bar{r}^2}\right)D^\beta\left(\partial_tP\right)\right\|\\
		\leq&C\big\{\epsilon_0,T,K_2\{\epsilon_0\}\big\}\vertiii{P}^2_m.
    \end{align*}
    Here,
    \begingroup
    \footnotesize
    \begin{align*}
		&\left\|D^\alpha\left(\frac{C_v}{\left(C_v+1\right)\left(\widebar{P}+1\right)\bar{\rho}\bar{r}^2}\right)D^\beta\left(\partial_tP\right)\right\|\\[3mm]
		&\ \ \lesssim\left\{
		\begin{aligned}
		&\left|D\left(\frac{C_v}{\left(C_v+1\right)\left(\widebar{P}+1\right)\bar{\rho}\bar{r}^2}\right)\right|_{L^{\infty}}\vertiii{\partial_tP}_{m-1}
		\lesssim\vertiii{\frac{C_v}{\left(C_v+1\right)\left(\widebar{P}+1\right)\bar{\rho}\bar{r}^2}}_2\vertiii{\partial_tP}_{m-1},\ \begin{aligned}
		&|\alpha|=1\\&|\beta|=k-1
		\end{aligned}\\[5mm]
		&\vertiii{\frac{C_v}{\left(C_v+1\right)\left(\widebar{P}+1\right)\bar{\rho}\bar{r}^2}}_m\left|\partial_tP\right|_{L^{\infty}}
		\lesssim\vertiii{\frac{C_v}{\left(C_v+1\right)\left(\widebar{P}+1\right)\bar{\rho}\bar{r}^2}}_m\vertiii{\partial_tP}_1,\qquad\ \begin{aligned}
		&|\alpha|=k\\&|\beta|=0
		\end{aligned}\\[5mm]
		&\vertiii{\frac{C_v}{\left(C_v+1\right)\left(\widebar{P}+1\right)\bar{\rho}\bar{r}^2}}_m\vertiii{\partial_tP}_{m-1}\ (\text{with Lemma \ref{inequality} (2)}),\qquad\begin{aligned}
		&|\alpha|+|\beta|<k+1\leq 2m-1\\
		&|\alpha|\leq k-1\leq m-1\\
		&|\beta|\leq k-2\leq m-2
		\end{aligned}
		\end{aligned}
		\right.\\[5mm]
		&\ \ \lesssim\vertiii{\frac{C_v}{\left(C_v+1\right)\left(\widebar{P}+1\right)\bar{\rho}\bar{r}^2}}_m\vertiii{P}_{m}\quad
		\mathrel{\overset{\makebox[0pt]{\mbox{\tiny\sffamily Lemma \ref{inequality}\ep{3}}}}{\leq}}\quad\  C\big\{\epsilon_0,T,K_2\{\epsilon_0\}\big\}\vertiii{P}_m.
	\end{align*}
\endgroup
  The remaining terms on the left side of \eqref{put} are $\vertiii{(P,u)(t)}_{m-1}$, the estimates for which can be obtained by following the same procedure but omitting the integration by parts in the spatial derivative terms to avoid boundary terms. With these estimates and \eqref{mtan}, the proof of \eqref{m-1mtan} completes.

Third, we are going to derive \eqref{puv}. Apply $\partial_t^{m-i}\partial_x^i$ on \eqref{Pu} and solve it for $\partial_t^{m-i}\partial_x^iP$, $\partial_t^{m-i}\partial_x^iu$ to get
\begin{align}\notag
	&\left\|\partial_t^{m-i}\partial_x^i\left(P,u\right)\right\|
	\leq\left|\frac{C_v}{\left(C_v+1\right)\left(\widebar{P}+1\right)\bar{\rho}\bar{r}^2}\right|_{L^\infty}\left\|\partial_t^{m-i+1}\partial_x^{i-1}P\right\|
	+\left|\frac{1}{\bar{r}^2}\right|_{L^\infty}\left\|\partial_t^{m-i+1}\partial_x^{i-1}u\right\|\\\notag
	&+\sum_{\substack{|\alpha|+|\beta|=m-1\\1\leq|\alpha|\leq m-1\\0\leq|\beta|\leq m-2}}
	\left\|D^\alpha\left(\frac{C_v}{\left(C_v+1\right)\left(\widebar{P}+1\right)\bar{\rho}\bar{r}^2}\right)D^\beta\left(\partial_tP\right)\right\|
	+\sum_{\substack{|\alpha|+|\beta|=m-1\\1\leq|\alpha|\leq m-1\\0\leq|\beta|\leq m-2}}
	\left\|D^\alpha\left(\frac{1}{\bar{r}^2}\right)D^\beta\left(\partial_tu\right)\right\|\\\notag
	&+\left\|\partial_t^{m-i}\partial_x^{i-1}\left(\frac{2\bar{u}}{\bar{r}}u\right)\right\|
	+\left\|\partial_t^{m-i}\partial_x^{i-1}\left[\frac{(\bar{r}^2q)_x}{\left(C_v+1\right)(\widebar{P}+1)\bar{r}^2}\right]\right\|\\[3mm] \notag
	&\leq C\big\{\epsilon_0,T,K_1\{\epsilon_0\}\big\}\left\|\partial_t^{m-i+1}\partial_x^{i-1}\left(P,u\right)\right\|+
	\epsilon_1\textstyle\sum_{i=1}^{m}\left\|\partial_t^{m-i}\partial_x^{i}\left(P,u\right)\right\|+\\[1.5mm]\label{m-ii}
	&\quad C\big\{\epsilon_0,T,K_1\{\epsilon_0\}\big\}
	\left(1+\vertiii{\left(\widebar{P},\bar{u},\bar{s},\bar{r}\right)}_{m,T}^a\right)
	\left(\vertiii{P}_{m-1}+\vertiii{P}_{m,tan}+\vertiii{(\bar{r}^2q)_x(t)}_{m-1}\right),
\end{align}
and \eqref{puv} is proved. We use the following estimate to illustrate why the last inequality in \eqref{m-ii} stands.
\begin{align*}\notag
    &\sum_{\substack{|\alpha|+|\beta|=m-1\\1\leq|\alpha|\leq m-1\\0\leq|\beta|\leq m-2}}
    \left\|D^\alpha\left(\frac{C_v}{\left(C_v+1\right)\left(\widebar{P}+1\right)\bar{\rho}\bar{r}^2}\right)D^\beta\left(\partial_tP\right)\right\|\\\notag
&\mathrel{\overset{\makebox[0pt]{\mbox{\scriptsize\sffamily Lemma \ref{inequality}\ep{2}}}}{\lesssim}}\qquad
\vertiii{\frac{C_v}{\left(C_v+1\right)\left(\widebar{P}+1\right)\bar{\rho}\bar{r}^2}}_{m-1}^{1-a_1}
\vertiii{\frac{C_v}{\left(C_v+1\right)\left(\widebar{P}+1\right)\bar{\rho}\bar{r}^2}}_{m}^{a_1}
\vertiii{P}_{m-1}^{1-a_2}\vertiii{P}_{m}^{a_2}\\\notag
&\mathrel{\overset{\makebox[0pt]{\mbox{\scriptsize\sffamily small $\epsilon_2$}}}{\lesssim}}\qquad
C\big\{\epsilon_0,T,K_1\{\epsilon_0\}\big\}\vertiii{\frac{C_v}{\left(C_v+1\right)\left(\widebar{P}+1\right)\bar{\rho}\bar{r}^2}}_{m}
\left(\epsilon_2\vertiii{P}_{m}+C\vertiii{P}_{m-1}\right)\\\notag
&\mathrel{\overset{\makebox[0pt]{\mbox{\scriptsize\sffamily Lemma \ref{inequality}\ep{3}}}}{\leq}}\qquad
C\big\{\epsilon_0,T,K_1\{\epsilon_0\}\big\}
\left(1+\vertiii{\left(\widebar{P},\bar{u},\bar{s},\bar{r}\right)}_m^{a_1}\right)
\left[\epsilon_2\textstyle\sum_{i=1}^{m}\left\|\partial_t^{m-i}\partial_x^{i}\left(P,u\right)\right\|+\right.\\[1.5mm]\notag
&\qquad\quad\left.C\left(\vertiii{P}_{m-1}+\vertiii{P}_{m,tan}\right)\right]\\\notag
&\leq\epsilon_1\textstyle\sum_{i=1}^{m}\left\|\partial_t^{m-i}\partial_x^{i}\left(P,u\right)\right\|+C\big\{\epsilon_0,T,K_1\{\epsilon_0\}\big\}
\left(1+\vertiii{\left(\widebar{P},\bar{u},\bar{s},\bar{r}\right)}_{m,T}^a\right)
\left(\vertiii{P}_{m-1}+\right.\\[2mm]
&\quad \left.\vertiii{P}_{m,tan}\right).\qquad
\left(\text{Let } a=a_1,\epsilon_1=\epsilon_2C\big\{\epsilon_0,T,K_1\{\epsilon_0\}\big\}
		\left(1+\vertiii{\left(\widebar{P},\bar{u},\bar{s},\bar{r}\right)}_{m,T}^a\right).\right)
\end{align*}
Now the whole proof is accomplished.

\end{proof}
\end{lemma}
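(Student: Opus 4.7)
The plan is to split the argument into three pieces: existence/uniqueness of $U$, the tangential-type energy bound \eqref{m-1mtan}, and the one-derivative-at-a-time normal bound \eqref{im-i}; the two estimates \eqref{put} and \eqref{puv} are then produced by an induction on the spatial-derivative count followed by a Grönwall step.

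For existence and uniqueness I would invoke the standard theory for linear symmetric hyperbolic initial-boundary value problems. Under hypothesis (i) the matrix $A$ is diagonal with strictly positive entries (so $A=A^T>0$), $B$ is constant and symmetric, and the boundary condition $u(t,0)=u(t,1)=0$ is maximally nonnegative with respect to $B$ since $\langle BU,U\rangle=2Pu$ vanishes on $\partial\mathbb{I}$. Together with the regularity assumption (ii) on $F$ and the initial data and the compatibility conditions (iii) matched to order $m$, the theorem of Rauch-Massey in \cite{MR834481} (based on \cite{MR340832}) produces a unique solution in $X_m([0,T];\mathbb{I})$, exactly as the excerpt already notes.

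For the tangential estimate \eqref{m-1mtan} the plan is to apply $D^{\alpha}=\partial_t^{\alpha_1}\partial_x^{\alpha_2}$ with $|\alpha|\leq m$, $|\alpha|\leq m-1$ or $\alpha_2=0$ to $LU=F$, pair with $D^{\alpha}U$, and integrate over $\mathbb{I}$. The top-order part collects into the time derivative of the weighted $L^2$ norm written on the left of \eqref{m-1mtan}; the $BU_x$ term is handled by integration by parts, and its boundary contribution vanishes either because $u=0$ on $\partial\mathbb{I}$ (when $\alpha_2=0$) or is absent (when the derivative is tangential in the standard sense). The remaining terms are of three types: time derivatives of the coefficient matrix acting on $|D^{\alpha}U|^2$, commutators $[D^{\alpha},A]\partial_t U$ and $[D^{\alpha},B]\partial_x U$, and the forcing $D^{\alpha}F$. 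Each commutator is estimated by Lemma \ref{inequality}(1)--(2), and the composition bound Lemma \ref{inequality}(3) converts the resulting $\vertiii{\cdot}_m$ norms of $A,B,C$ into powers of $\vertiii{(\widebar{P},\bar u,\bar s,\bar r)}_{m,T}$ and thus into a constant of the form $C\{\epsilon_0,T,K_2\{\epsilon_0\}\}$; here \eqref{bar} and \eqref{barr} are used to bound $\bar r$ and its derivatives.

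The estimate \eqref{im-i} is obtained by using the equation itself to trade a spatial derivative for a time derivative. Since $B$ is a constant invertible matrix, \eqref{Pu} yields $(P_x,u_x)^T=-B(AU_t+CU)+BF$, and applying $\partial_t^{m-i}\partial_x^{i-1}$ expresses $\partial_t^{m-i}\partial_x^iU$ as a combination of $\partial_t^{m-i+1}\partial_x^{i-1}U$ (the first term on the right of \eqref{im-i}), a lower-order piece controlled by $\vertiii{(P,u)}_{m-1}+\vertiii{(P,u)}_{m,\text{tan}}+\vertiii{(\bar r^2q)_x}_{m-1}$, and a commutator remainder. The latter is bounded by Lemma \ref{inequality}(2) as $\vertiii{\text{coeff}}_{m-1}^{1-a}\vertiii{\text{coeff}}_m^{a}\,\vertiii{U}_{m-1}^{1-a}\vertiii{U}_m^{a}$ for some $0<a<1$, so that Young's inequality separates out the small absorbable piece $\epsilon_1\sum_i\|\partial_t^{m-i}\partial_x^iU\|$ from the lower-order terms; this is the origin of the exponent $a$ in \eqref{puv}. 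Once \eqref{im-i} is in hand, I would iterate it for $i=1,2,\dots,m$ starting from the tangential control of $\partial_t^{m-1}\partial_xU$, absorbing all the $\epsilon_1$-terms at the final step; this produces \eqref{puv}. Plugging \eqref{puv} into the right-hand side of \eqref{m-1mtan}, using \eqref{1}--\eqref{-1} to compare weighted and unweighted $L^2$ norms, and applying Grönwall's inequality yields \eqref{put}.

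The main obstacle will be the bookkeeping for the commutators, in particular controlling factors such as $\vertiii{\bar r}_m$, which through $\bar r_x=1/(r_0^2\rho_0)+\int_0^t\bar u_x\,\mathrm d\tau$ carry one full derivative of $\bar u$ and hence sit at the top level dictated by $K_2\{\epsilon_0\}$; these are not small, so the classical small-coefficient absorption is unavailable. Overcoming this forces the use of the subcritical exponent $a<1$ in the Gagliardo-Nirenberg bound Lemma \ref{inequality}(2) and the splitting $\vertiii{\cdot}_{m-1}+\vertiii{\cdot}_{m,\text{tan}}$ versus the full $\vertiii{\cdot}_m$, so that the highest-order norm of $U$ appears only with a coefficient $\epsilon_1$ that can be absorbed by the left-hand side.
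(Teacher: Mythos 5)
Your proposal is correct and follows essentially the same route as the paper: (a) existence/uniqueness via the symmetric-hyperbolic theory with maximally nonnegative boundary conditions citing the same references; (b) the tangential/$(m-1)$-order energy identity \eqref{m-1mtan} from applying $\partial_t^k$ resp.\ lower-order $D^\alpha$, pairing with $D^\alpha U$, handling the commutators by Lemma \ref{inequality} and Lemma \ref{inequality}(3); (c) the normal-derivative recursion \eqref{im-i} obtained by solving the system for the $x$-derivatives, with the exponent $a<1$ produced by Lemma \ref{inequality}(2) plus Young to absorb the top-order $\epsilon_1$-term; and (d) iteration then Gr\"onwall to close \eqref{puv} and \eqref{put}. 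One slight imprecision: for $|\alpha|\le m-1$ with $\alpha_2>0$ the boundary term from $BU_x$ would \emph{not} vanish, so (as the paper notes) one should simply \emph{not} integrate by parts in those cases rather than appeal to a vanishing boundary contribution --- your phrase ``is absent'' seems to intend this, but it is worth stating explicitly.
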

\subsection{Iteration Scheme}
Now with the solvability of linearized system, we are able to prove Theorem \ref{thm} through the classical iteration process, which can be roughly presented as obtaining a solution-sequence  $\left([P_{k+1}],[u_{k+1}],[s_{k+1}],[q_{k+1}]\right)$ iteratively to the following linear system, for $k=0,1,2,\cdots$.
\begin{equation}
\label{cns10}
\left\{	
\begin{aligned}
&[P_{k+1}]_t+\frac{C_v+1}{C_v}([P_k]+1)[\rho_k]\left([r_k]^2[u_{k+1}]\right)_x+\frac{1}{C_v}[\rho_k]([r_k]^2[q_{k+1}])_x=0,\\[2mm]
&[u_{k+1}]_t+[r_k]^2[P_{k+1}]_x=0,\\[2mm]
&[s_{k+1}]_t+\frac{([r_k]^2[q_{k+1}])_x}{[\theta_k]}=0,\\
&\frac{[q_{k+1}]}{[r_k]^2[\rho_k]}-\Big([\rho_k]([r_k]^2[q_{k+1}])_{x}\Big)_x+\frac{4[\theta_k]^3}{(C_v+1)[\rho_k]}\Big([P_k]_x+\left([P_k]+1\right)[s_k]_x\Big)=0 .
\end{aligned}	
\right.
\end{equation}
with
\begin{alignat}{2} \label{kinitial}
([P_{k+1}],[u_{k+1}],[s_{k+1}])(0,x)&=
(P_0-1,u_0,s_0-1)(x),
\qquad && x\in\mathbb{I},\\[1mm]               \label{kbdry}
([u_{k+1}],[q_{k+1}])(t,0)&=([u_{k+1}],[q_{k+1}])(t,1)=0,\qquad&& t\geq 0,
\end{alignat}
At outset, it is not obvious the iterates above are well-posed. The following three lemmas are aimed to establish this. First, we can see that the starting point of iteration is not given yet and the iteration systems do not necessarily satisfy the compatible conditions. Usually, the initial data (see \cite[p.~36]{MR0748308}) or the equilibrium states (see \cite[p.~307]{MR2022134}) are used for the beginning of iteration. However, in our case when boundary condition are included these two won't work since we need all iteration systems satisfy the compatible conditions. We can see it in Lemma that if the starting point we choose such that the first system satisfies the compatible conditions, then all systems in iteration sequence  naturally do. So the fact we aim to prove in the first lemma is that such starting point does exist, which is analogous to \cite[Lemma A3]{MR834481}.
\begin{lemma}\label{lemma1}
\begin{spacing}{1.3}
There exist $\left([P_{0}],[u_{0}],[s_{0}]\right)\in X_m([0,T];\mathbb{I})$ which satisfy the boundary conditions $[u_{0}](t,0)=[u_{0}](t,1)=0$ and initial conditions
\begin{equation}\label{initialconditions}
	\left(\partial_t^k[P_{0}],\partial_t^k[u_{0}],\partial_t^k[s_{0}]\right)(0,x)=
	\left\{
	\begin{aligned}
	&(P_0-1,u_0,s_0-1), &&k=0\\
	&\left(\partial_t^kP,\partial_t^ku,\partial_t^ks\right)(0),&\quad&1\leq k\leq m
	\end{aligned}
	\right.
\end{equation}
\end{spacing}
\begin{proof}
We just need to find some functions $\left([P_{0}],[u_{0}],[s_{0}]\right)\in H^{m+1}([0,T]\times\mathbb{I})$,
which satisfy the initial conditions \eqref{initialconditions}. To achieve that, we can utilize Lemma \ref{giveninitialvalues}, which requires the values that $\left(\partial_t^k[P_{0}],\partial_t^k[u_{0}],\partial_t^k[s_{0}]\right)(0,x)$ take should belong to $H^{m+1-k-\frac 1 2}(\mathbb{I})$, if we want $\left([P_{0}],[u_{0}],[s_{0}]\right)\in H^{m+1}([0,T]\times\mathbb{I})$. It is a natural requirement if we see from Trace Theorem. However, the values in \eqref{initialconditions} only belong to $H^{m-k}(\mathbb{I})$. To overcome this difficulty, we shall make use of our linearized system. Let $\left([P_{k}],[u_{k}],[s_{k}]\right)=(P_0,u_0,s_0)$ and $\left([\rho_k],[\theta_k],[r_k]\right)=(\rho_0,\theta_0,r_0)$ in the first three equations of system \eqref{cns10}.
	\begin{equation}
	\label{startingsystem}
	\left\{	
	\begin{aligned}
	&[P_{0}]_t+\frac{C_v+1}{C_v}(P_0+1)\rho_0\left(r_0^2[u_{0}]\right)_x=R_1,\\[2mm]
	&[u_{0}]_t+r_0^2[P_{0}]_x=R_2,\\[2mm]
	&[s_{0}]_t=R_3.
	\end{aligned}	
	\right.
	\end{equation}
	with initial and boundary conditions that
	\begin{alignat}{2} \label{0initial}
	([P_{0}],[u_{0}],[s_{0}])(0,x)&=
	(P_0-1,u_0,s_0-1)(x),
	\qquad && x\in\mathbb{I},\\[1mm]               \label{0bdry}
	[u_{0}](t,0)&=[u_{0}](t,1)=0,\qquad&& t\geq 0.
	\end{alignat}
Let $\left([P_{0}],[u_{0}],[s_{0}]\right)$ be the solution of above system and the good in this is that instead of finding $\left([P_{0}],[u_{0}],[s_{0}]\right)\in H^{m+1}([0,T]\times\mathbb{I})$,
which satisfy the initial conditions \eqref{initialconditions}, we only need to choose functions $\left(R_1(t,x),R_2(t,x),R_3(t,x)\right)\in H^m([0,T]\times\mathbb{I})$ such that
\begin{equation}\label{initialcompatible}
	\left(\partial_t^k[P_{0}],\partial_t^k[u_{0}],\partial_t^k[s_{0}]\right)(0,x)=\left(\partial_t^kP(0,x),\partial_t^ku(0,x),\partial_t^ks(0,x)\right),\quad 1\leq k\leq m,
\end{equation}
If this can be achieved, the compatible conditions for above linear system is naturally met and with Theorem \ref{linearizedsolution} we can conclude that there exist such $\left([P_{0}],[u_{0}],[s_{0}]\right)\in X_m([0,T];\mathbb{I})$. In order to have \eqref{initialcompatible}, let
\begin{equation}\label{structure}
\left\{	\begin{aligned}
	&\partial_t^kR_1(0,x)=\partial_t^{k+1}P(0,x)+\frac{C_v+1}{C_v}(P_0+1)\rho_0\left(r_0^2\partial_t^{k}u(0,x)\right)_x,\\
	&\partial_t^kR_2(0,x)=\partial_t^{k+1}u(0,x)+r_0^2\left(\partial_t^{k}P(0,x)\right)_x,\\[1.5mm]
	&\partial_t^kR_3(0,x)=\partial_t^{k+1}s(0,x),
	\end{aligned}\right.
\end{equation}
for $0\leq k\leq m$. We can see although $\partial_t^{k+1}P(0,x),\partial_t^{k+1}u(0,x)\in H^{m-k-1}(\mathbb{I})$, due to the structure \eqref{structure} \ep{depending on the system \eqref{startingsystem}},  $\partial_t^kR_i(0,x)\in H^{m-k}(\mathbb{I})\subset H^{m-k-\frac 12}(\mathbb{I})$ \ep{$i=1,2,3$}. Now with Lemma \ref{giveninitialvalues}, we can prove such $R_i(t,x)\in H^m([0,T]\times\mathbb{I})\ (i=1,2,3)$ do exist and the proof of Lemma \ref{lemma1} completes.
\end{proof}
\end{lemma}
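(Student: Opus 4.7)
The goal is to produce $([P_0],[u_0],[s_0])\in X_m([0,T];\mathbb{I})$ realizing the $t=0$ time derivatives inherited (inductively via \eqref{cns4}) from the initial data. A direct application of the lifting lemma (Lemma \ref{giveninitialvalues}) does not suffice on its own: the prescribed values $(\partial_t^{k}P,\partial_t^{k}u,\partial_t^{k}s)(0)$ lie in $H^{m-k}(\mathbb{I})$, which is enough to produce a function in the isotropic space $H^m([0,T]\times\mathbb{I})$, but traces in time only give $H^{m-k-1/2}$ sections and in general fall short of the anisotropic target space $X_m$. The plan is therefore indirect: realize $([P_0],[u_0],[s_0])$ as the solution of the \emph{linear} initial-boundary value problem \eqref{startingsystem}--\eqref{0bdry} whose principal part mirrors the leading structure of \eqref{cns5} with coefficients frozen at the initial profile $(P_0,\rho_0,r_0)$ and with forcings $(R_1,R_2,R_3)$ to be chosen. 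Once $R_i\in H^m([0,T]\times\mathbb{I})$ and the compatibility conditions at $t=0$ are in force, Theorem \ref{linearizedsolution} delivers a solution in $X_m$ directly.

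To pin down $R_i$, I would differentiate \eqref{startingsystem} $k$ times in $t$, set $t=0$, and require the outcome to coincide with the prescribed $(\partial_t^{k+1}P,\partial_t^{k+1}u,\partial_t^{k+1}s)(0)$ from the nonlinear scheme; this produces exactly the defining relations in \eqref{structure}. Hypothesis \eqref{compatible} automatically supplies the boundary compatibility $\partial_t^{k}[u_0](0,x)=0$ on $\partial\mathbb{I}$ needed to apply Theorem \ref{linearizedsolution}, so the remaining concern is the regularity of the right-hand sides $\partial_t^{k}R_i(0,\cdot)$. By a naive count they would only sit in $H^{m-k-1}(\mathbb{I})$, since $\partial_t^{k+1}P$ and $\partial_t^{k+1}u$ at $t=0$ are elements of $H^{m-k-1}(\mathbb{I})$, which is one half derivative too coarse for Lemma \ref{giveninitialvalues} to lift up into $H^m([0,T]\times\mathbb{I})$.

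Promoting the data to $H^{m-k}(\mathbb{I})$ is the key technical step, and the one I expect to be the main obstacle. The mechanism is a cancellation: when $\partial_t^{k+1}P(0,x)$ is expanded via the original nonlinear equation \eqref{cns5}$_1$, its leading singular term equals $-\frac{C_v+1}{C_v}(P_0+1)\rho_0(r_0^2\partial_t^{k}u(0,x))_x$ up to milder pieces, which exactly cancels the matching term added in \eqref{structure}. What survives is a sum of commutators of $\partial_t^{k}$ with the coefficients $(P+1)\rho$, $\theta^{-1}$, etc., together with the flux term involving $q$; in each such commutator the time derivatives are redistributed onto the smooth coefficients rather than onto the spatial gradient, and the product/composition estimates in Lemma \ref{inequality} then place every piece in $H^{m-k}(\mathbb{I})$. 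A parallel cancellation handles $R_2$, while $R_3$ is immediate. Executing this product-rule bookkeeping and tracking regularity indices carefully is, to my mind, the delicate portion of the argument.

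Once $\partial_t^{k}R_i(0,\cdot)\in H^{m-k}(\mathbb{I})\subset H^{m-k-1/2}(\mathbb{I})$ for $0\leq k\leq m$ is established, Lemma \ref{giveninitialvalues} supplies $R_1,R_2,R_3\in H^m([0,T]\times\mathbb{I})$. By construction the initial-time compatibility of system \eqref{startingsystem} holds, and the boundary compatibility follows from \eqref{compatible}; Theorem \ref{linearizedsolution} applied to \eqref{startingsystem}--\eqref{0bdry} then yields the desired $([P_0],[u_0],[s_0])\in X_m([0,T];\mathbb{I})$ satisfying the prescribed initial conditions \eqref{initialconditions}, which completes the plan.
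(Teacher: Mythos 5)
Your proposal follows the paper's own proof essentially verbatim: realize $([P_0],[u_0],[s_0])$ as the solution of the linear IBVP \eqref{startingsystem}--\eqref{0bdry} with forcings chosen via \eqref{structure}, observe that $\partial_t^kR_i(0,\cdot)$ lands in $H^{m-k}(\mathbb{I})$ rather than the naive $H^{m-k-1}(\mathbb{I})$, lift by Lemma \ref{giveninitialvalues}, and invoke Theorem \ref{linearizedsolution}. You correctly unpack the paper's terse ``due to the structure \eqref{structure}'' into the leading-order cancellation between $\partial_t^{k+1}P(0,\cdot)$ (resp.\ $\partial_t^{k+1}u(0,\cdot)$) as defined from the nonlinear system and the compensating flux term added in \eqref{structure}, with the surviving commutators redistributing time derivatives onto the coefficients and the $q$-terms gaining regularity from the elliptic equation.
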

In the next two lemmas, we aim to prove the solution sequence generated by the iteration process is convergent in a certain function space. With the classical method \ep{see \cite[p.~34--46]{MR0748308}}, it is converted into proving two simple facts about the sequence, boundedness in the high norm and contraction in the low norm.
\begin{lemma}[boundedness in the high norm]\label{boundedness}
	There are sufficiently small constants $T_1$ and $K_3\{\epsilon_0\}$ such that $\Big(\left[P_k\right], \left[u_k\right], \left[s_k\right], \left[q_k\right],\left([r_{k-1}]^2[q_k]\right)_x\Big)
	\in B_m\left([0,T_1
	];\mathbb{I};K_3\{\epsilon_0\}\right)$. Furthermore, $\partial_t^i[u_k]=0$ on $\partial\mathbb{I}$ and
	\begin{align*}
		\left(\partial_t^i\left[P_k\right], \partial_t^i\left[u_k\right], \partial_t^i\left[s_k\right]\right)(0,x)
		=\left(\partial_t^iP(0,x),\partial_t^iu(0,x),\partial_t^is(0,x)\right),\qquad
		i=0,\cdots,m.
	\end{align*}
	\begin{proof}
			   Since $R_i(t,x)\in H^m([0,T]\times\mathbb{I})\ (i=1,2,3)$, let's say $\vertiii{R_i(t,x)}_{m,T}\leq K_0$ and according to \eqref{structure}, we have $\vertiii{\left(R_2,R_3\right)(0)}^2_m\leq C_4\{\epsilon_0\}\epsilon_0$. Now we choose a suitable $T_2$, $K_1\{\epsilon_0\}$ and $K_2\{\epsilon_0\}$ such that for any $T'$ that $0<T'\leq T_2\leq T$, our starting point $\left([P_{0}],[u_{0}],[s_{0}]\right)$ satisfy
			   \begin{align}\label{space0}
			   	\left([P_{0}],[u_{0}],[s_{0}]\right)\in A_m\left([0,T'];\mathbb{I};K_1\{\epsilon_0\}\right)\textstyle\bigcap B_m\left([0,T'];\mathbb{I};K_2\{\epsilon_0\}\right).
			   \end{align}
			   To achieve that, we apply the similar estimates to system \eqref{startingsystem} as what we did in \eqref{put}, \eqref{puv} and \eqref{sv}. Then we have
	\begin{align*}
	&\vertiii{([P_0],[u_0])(t)}_{m,tan}^2+\vertiii{([P_0],[u_0])(t)}_{m-1}^2
	\leq C_{6}\{\epsilon_0\}\cdot\exp\Big\{{t\cdot C_{8}\{\epsilon_0\}}\Big\}
	\left[\vertiii{(P,u)(0)}_{m-1}^2\right.
	\\[-1mm]
	&\left.\qquad\quad\ +
	\vertiii{(P,u)(0)}_{m,tan}^2+
	\int_{0}^{t} \vertiii{R_1(s)}_{m}^2{\rm d}s\right]
	\leq C_{6}\{\epsilon_0\}\cdot\exp\Big\{{t\cdot C_{8}\{\epsilon_0\}}\Big\}
	\left[2C_1\{\epsilon_0\}\epsilon_0+T\cdot K_0^2\right],
	\\[0.5mm]\notag
	&\vertiii{[s_0](t)}^2_{m}\lesssim
	\left\|s_0\right\|^2_{H^m}+
	\vertiii{R_3(t)}^2_{m-1}+
	\int_{0}^t\vertiii{R_3(\tau)}^2_{m}\ {\rm d}\tau
	\lesssim\epsilon_0+
	\Big(C_{4}\{\epsilon_0\}\epsilon_0
	+T\cdot K_0^2\Big)
	+T\cdot K_0^2.\\[2mm]
	&\vertiii{[s_0](t)}^2_{m,tan}\!+\vertiii{[s_0](t)}^2_{m-1}
	\leq\vertiii{R_3(t)}^2_{m-1}\!+\vertiii{[s_0](t)}^2_{m}
	\leq\epsilon_0+
	2\Big(C_{4}\{\epsilon_0\}\epsilon_0
	+T\cdot K_0^2\Big)\!
	+T\cdot K_0^2,\\[3mm]
	&\vertiii{\left([P_0],[u_0]\right)(t)}_m\leq C_{10}\{\epsilon_0\}\left[\vertiii{\left([P_0],[u_0]\right)(t)}_{m-1}
	+\vertiii{\left([P_0],[u_0]\right)(t)}_{m,tan}+\vertiii{R_2(t)}_{m-1}\right]\\[-1mm]
	&\qquad\qquad\qquad\ \ \leq C_{10}\{\epsilon_0\}\left[\vertiii{\left([P_0],[u_0]\right)(t)}_{m-1}
	+\vertiii{\left([P_0],[u_0]\right)(t)}_{m,tan}+\sqrt{C_{4}\{\epsilon_0\}\epsilon_0+T\cdot K_0^2}\right].
	\end{align*}	
		For any fixed $\epsilon_0$, denote
		\begin{align*}
			&G_{4}^*=\max\Big\{G_{4}\{\epsilon_0\},C_{4}\{\epsilon_0\}\Big\},\quad G_{6}^*=\max\Big\{G_{6}\{\epsilon_0\},C_{6}\{\epsilon_0\}\Big\},\\ &G_{10}^*=\max\Big\{G_{10}\big\{\epsilon_0,1,K_1\{\epsilon_0\}\big\},C_{10}\{\epsilon_0\}\Big\}.
		\end{align*}
		Now we define
		\begin{align*}
			&K_1\{\epsilon_0\}=2\big(G_{6}^*+1\big)\big(2C_1\{\epsilon_0\}+1\big)\epsilon_0+2\epsilon_0+2\big(G_{2}\{\epsilon_0\}+1\big)\big(G_{4}^*+1\big)\epsilon_0,\\[3mm]
		    &K_2\{\epsilon_0\}=\frac{1}{1-a}G_{10}^*\Big(2+\left(C_2\{\epsilon_0\}\right)^a\Big)
		   \left[\sqrt{2K_1\{\epsilon_0\}}
			+\sqrt{\left(G_{4}^*+1\right)\epsilon_0}\right]\\[-1.5mm]
			&\quad+\left(G_{10}^*\left[\sqrt{2K_1\{\epsilon_0\}}
			+\sqrt{\left(G_{4}^*+1\right)\epsilon_0 }\ \right]\right)^{\frac{1}{1-a}}
			+\frac{1}{1-a}\sqrt{2\epsilon_0+\left(G_{2}\{\epsilon_0\}+1\right)\left(G_{4}^*+1\right)\epsilon_0},\\[3mm]
			&K_3\{\epsilon_0\}=\left(\sqrt{G_1\big\{\epsilon_0,1,K_1\{\epsilon_0\}\big\}}+1\right)K_2\{\epsilon_0\}.
		\end{align*}

	   Since $T_2$ is small, without loss of generality we assume $T_2<1$. Furthermore, we set $T_2\cdot K_0^2\leq\epsilon_0$ and $\exp\Big\{{T_2\cdot C_{8}\big\{\epsilon_0,1,K_2\{\epsilon_0\}\big\}}\Big\}<2$. It's easy to check that
	   \begin{align*}
	   	&\vertiii{([P_0],[u_0],[s_0])(t)}_{m-1}^2+ \vertiii{([P_0],[u_0],[s_0])(t)}_{m,tan}^2\\[1.5mm]
	   	&\qquad\qquad\qquad\qquad\qquad\quad\ \,\leq 2\big(C_{4}\{\epsilon_0\}+1\big)\big(2C_1\{\epsilon_0\}+1\big)\epsilon_0+2\epsilon_0+2\big(C_{4}\{\epsilon_0\}+1\big)\epsilon_0
	   	\leq K_1\{\epsilon_0\},\\[4mm]
	   	&\vertiii{\left([P_0],[u_0]\right)(t)}_m+\vertiii{[s_0](t)}_{m}\\
	   	&\qquad\qquad\qquad\leq C_{10}\{\epsilon_0\}
	   	\left(\sqrt{2K_1\{\epsilon_0\}}
	   	+\sqrt{\left(C_{4}\{\epsilon_0\}+1\right)\epsilon_0 }\right)
	   	+\sqrt{2\epsilon_0+\left(C_{4}\{\epsilon_0\}+1\right)\epsilon_0}\leq K_2\{\epsilon_0\},
	   \end{align*}
	   and \eqref{space0} stands.
	
	    Next, we prove that there exists a $T_1$ such that for any $T'$ satisfying $0<T'\leq T_1\leq T_2$ if
	    \begin{align}\notag
	    	&\left([P_{k}],[u_{k}],[s_{k}]\right)\in A_m\left([0,T_1];\mathbb{I};K_1\{\epsilon_0\}\right)\textstyle\bigcap B_m\left([0,T_1];\mathbb{I};K_2\{\epsilon_0\}\right),\\\label{initialt0}
	    	&\left(\partial_t^i\left[P_k\right], \partial_t^i\left[u_k\right], \partial_t^i\left[s_k\right]\right)(0,x)
	    	=\left(\partial_t^iP,\partial_t^iu,\partial_t^is\right)(0),\qquad
	    	i=0,\cdots,m.
	    \end{align}
	    then
	    \begin{align*}
	    	&\Big([P_{k+1}],[u_{k+1}],[s_{k+1}],[q_{k+1}],\left([r_k]^2[q_{k+1}]\right)_x\Big)\in B_m\left([0,T_1];\mathbb{I};K_3\{\epsilon_0\}\right),\\
	    	&\left(\partial_t^i\left[P_{k+1}\right], \partial_t^i\left[u_{k+1}\right], \partial_t^i\left[s_{k+1}\right]\right)(0,x)
	    	=\left(\partial_t^iP,\partial_t^iu,\partial_t^is\right)(0),\qquad
	    	i=0,\cdots,m.
	    \end{align*}
	     First, we can see it from the definition of $\left(\partial_t^iP(0,x),\partial_t^iu(0,x),\partial_t^is(0,x)\right)$, conditions \eqref{initialt0} and the structure of system \eqref{cns10} that
	     \begin{equation*}
	     	\left(\partial_t^i\left[P_{k+1}\right], \partial_t^i\left[u_{k+1}\right], \partial_t^i\left[s_{k+1}\right]\right)(0,x)
	     	=\left(\partial_t^iP(0,x),\partial_t^iu(0,x),\partial_t^is(0,x)\right).
	     \end{equation*}
	     Second, we choose $T_1$ is small enough such that $|\bar{r}|_{L^\infty}$ has a positive lower bound, $T_1\cdot K_2\{\epsilon_0\}<\frac 12$, $T_1\cdot G_{7}\big\{\epsilon_0,1,K_1\{\epsilon_0\}\big\}<1$, $\exp\Big\{{T_1\cdot G_{8}\big\{\epsilon_0,1,K_2\{\epsilon_0\}\big\}}\Big\}<2$, $T_1\cdot G_{3}\big\{K_2\{\epsilon_0\}\big\}<1$,
	   \begin{align*}
	   T_1\cdot& G_{5}\big\{K_2\{\epsilon_0\}\big\}G_1\big\{\epsilon_0,1,K_1\{\epsilon_0\}\big\}\left(K_2\{\epsilon_0\}\right)^2<\epsilon_0,\\[2mm]
	    	G_{9}\left\{\epsilon_0,1,K_2\{\epsilon_0\}\right\}&\int_{0}^{t} \vertiii{(\bar{r}^2q)_x(s)}_{m}^2{\rm d}s\leq
	    	T_1\cdot G_{9}
	    	G_1\big\{\epsilon_0,1,K_1\{\epsilon_0\}\big\}\left(K_2\{\epsilon_0\}\right)^2\leq \epsilon_0,\\
	   \vertiii{(\bar{r}^2q)_x(t)}_{m-1}^2&
	   \mathrel{\overset{\makebox[0pt]{\mbox{\scriptsize\sffamily  \eqref{smtan}}}}{\leq}}
	   \
	   G_{4}\{\epsilon_0\}\epsilon_0
	   +T_1\cdot G_1\big\{\epsilon_0,1,K_1\{\epsilon_0\}\big\}\left(K_2\{\epsilon_0\}\right)^2
	   \leq \left(G_{4}\{\epsilon_0\}+1\right)\epsilon_0.
	   \end{align*}
	   Then we have from \eqref{put}, \eqref{puv}, \eqref{smtan}, \eqref{sv} and \eqref{barq} that
	   \begin{align*}
	   	&\vertiii{([P_{k+1}],[u_{k+1}],[s_{k+1}])(t)}_{m,tan}^2+\vertiii{([P_{k+1}],[u_{k+1}],[s_{k+1}])(t)}_{m-1}^2\\[1mm]
	   	&\qquad\qquad\qquad
	   	\leq2\big(G_{6}\{\epsilon_0\}+1\big)\big(2C_1\{\epsilon_0\}+1\big)\epsilon_0+2\epsilon_0+2\big(G_{2}\{\epsilon_0\}+1\big)\big(G_{4}\{\epsilon_0\}+1\big)\epsilon_0\leq K_1\{\epsilon_0\},\\[2mm]
	   	&\vertiii{\left([P_{k+1}],[u_{k+1}]\right)(t)}_m+\vertiii{[s_{k+1}](t)}_{m}\\[-0.5mm]
	   	&\qquad\qquad\qquad\mathrel{\overset{\makebox[0pt]{\mbox{\scriptsize\sffamily  \eqref{barr}}}}{\leq}}
	   	G_{10}\big\{\epsilon_0,1,K_1\{\epsilon_0\}\big\}\vertiii{\left([P_{k}],[u_{k}],[s_{k}]\right)}_{m,T}^a\left[\sqrt{2K_1\{\epsilon_0\}}+\sqrt{\left(G_{4}\{\epsilon_0\}+1\right)\epsilon_0}\right]\\
	   	&\qquad\qquad\qquad\quad\ +
	   	G_{10}\big\{\epsilon_0,1,K_1\{\epsilon_0\}\big\}
	   	\Big(2+\left(C_2\{\epsilon_0\}\right)^a\Big)\left[\sqrt{2K_1\{\epsilon_0\}}+\sqrt{\left(G_{4}\{\epsilon_0\}+1\right)\epsilon_0}\right]\\
	   	&\qquad\qquad\qquad\quad\ +\sqrt{2\epsilon_0+\left(G_{2}\{\epsilon_0\}+1\right)\left(G_{4}\{\epsilon_0\}+1\right)\epsilon_0}\\[-1mm]
	   	&\qquad\ \ \mathrel{\overset{\raise1mm\hbox{\scriptsize\sffamily  Young's inequality}}{\leq}}\!\!\!\!\!\!\!\!
	   	aK_2\{\epsilon_0\}
	   	+(1-a)\left(G_{10}\big\{\epsilon_0,1,K_1\{\epsilon_0\}\big\}
	   	\left[\sqrt{2K_1\{\epsilon_0\}}+\sqrt{\left(G_{4}\{\epsilon_0\}+1\right)\epsilon_0}\right]\right)^\frac{1}{1-a}\\
	   	&\qquad\qquad\qquad\quad\
	   	+G_{10}\big\{\epsilon_0,1,K_1\{\epsilon_0\}\big\}
	   	\left(2+\left(C_2\{\epsilon_0\}\right)^a\right)\left[\sqrt{2K_1\{\epsilon_0\}}+\sqrt{\left(G_{4}\{\epsilon_0\}+1\right)\epsilon_0}\right]\\
	   	&\qquad\qquad\qquad\quad\
	   	+\sqrt{2\epsilon_0+\left(G_{2}\{\epsilon_0\}+1\right)\left(G_{4}\{\epsilon_0\}+1\right)\epsilon_0}\\[1mm]
	   	&\qquad\qquad\qquad
	   	\leq aK_2\{\epsilon_0\}+(1-a)K_2\{\epsilon_0\}=K_2\{\epsilon_0\},\\[2mm]
	   	&\vertiii{([P_{k+1}],[u_{k+1}],[s_{k+1}])(t)}_{m}+\vertiii{\Big([q_{k+1}],\left([r_k]^2[q_{k+1}]\right)_x\Big)(t)}_{m}\\[-1.5mm]
	   	&\qquad\qquad\qquad\qquad\qquad\qquad\qquad\qquad\qquad\quad
	   	\mathrel{\overset{\makebox[0pt]{\mbox{\scriptsize\sffamily  \eqref{barq}}}}{\leq}}
	   	K_2\{\epsilon_0\}
	   	+\sqrt{G_1\big\{\epsilon_0,1,K_1\{\epsilon_0\}\big\}}K_2\{\epsilon_0\}
	   	=K_3\{\epsilon_0\}.
	   \end{align*}
	\end{proof}
\end{lemma}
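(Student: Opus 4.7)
The natural strategy is induction on $k$. I would build the proof in three stages: constructing the base iterate, setting up inductive hypotheses tailored to the estimates proved for the linearized system, and choosing the constants $T_1,K_1,K_2,K_3$ in that order to make the induction close.

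For the base case I would invoke Lemma \ref{lemma1} to produce $\bigl([P_0],[u_0],[s_0]\bigr)$ as the solution of the linearized system \eqref{startingsystem} with source terms $(R_1,R_2,R_3)$ crafted so that the prescribed $(\partial_t^k P(0,\cdot),\partial_t^k u(0,\cdot),\partial_t^k s(0,\cdot))$ are attained at $t=0$. Applying the energy inequalities \eqref{put}, \eqref{puv}, \eqref{smtan}, \eqref{sv} to this particular system (where the coefficients depend only on the frozen data $(P_0,u_0,s_0,r_0)$), and using \eqref{XmHm} to replace $\vertiii{(\cdot)(0)}_m^2$ by a multiple of $\epsilon_0$, the right-hand sides reduce to quantities of the form $C\{\epsilon_0\}\epsilon_0 + T\cdot C\{\epsilon_0,\|R_i\|_m\}$. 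Fixing $\|R_i\|_{m,T}\le K_0$ and taking $T_2$ small enough that $T_2 K_0^2\le\epsilon_0$ yields constants $K_1\{\epsilon_0\},K_2\{\epsilon_0\}$ such that $\bigl([P_0],[u_0],[s_0]\bigr)\in A_m\cap B_m$ with bounds $K_1,K_2$ respectively. The compatibility $\partial_t^i[u_0](\cdot,0)=\partial_t^i[u_0](\cdot,1)=0$ is automatic from \eqref{compatible} and the structure of \eqref{startingsystem}.

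For the inductive step, assume $\bigl([P_k],[u_k],[s_k]\bigr)$ is in $A_m([0,T_1];\mathbb{I};K_1\{\epsilon_0\})\cap B_m([0,T_1];\mathbb{I};K_2\{\epsilon_0\})$ with the prescribed time-derivatives at $t=0$. The identification $(\bar P,\bar u,\bar s)=([P_k],[u_k],[s_k])$ lets me apply Theorem \ref{linearizedsolution} to \eqref{cns10}, producing $\bigl([P_{k+1}],[u_{k+1}],[s_{k+1}],[q_{k+1}]\bigr)$. The compatibility conditions are preserved because evaluating \eqref{cns10} at $t=0$ reproduces precisely the recursive definition of $(\partial_t^i P(0,\cdot),\partial_t^i u(0,\cdot),\partial_t^i s(0,\cdot))$ given in the introduction. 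The $A_m$ estimate on the new iterate comes from \eqref{put} combined with \eqref{smtan}: choosing $T_1$ small enough that $\exp\{T_1\cdot G_8\}<2$ and that every time-integrated term of the form $T_1\cdot G_i\cdot(K_2)^2$ is bounded by $\epsilon_0$, the bound stays below a constant $K_1\{\epsilon_0\}$ depending only on $\epsilon_0$ through $C_1,G_2,G_4,G_6$. The $B_m$ estimate requires \eqref{puv}, and here is where the delicate choice of $K_2\{\epsilon_0\}$ enters.

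The main obstacle is that \eqref{puv} contains a factor $(1+\vertiii{(\bar P,\bar u,\bar s,\bar r)}_{m,T}^{a})$, so a naive bound would yield $K_2^{\text{new}}\lesssim K_2^{a}\cdot(\text{data})$, which does not close for $a<1$ without care. The trick is Young's inequality $x^a y\le a x+(1-a)y^{1/(1-a)}$, which splits the super-linear term into a contraction part $aK_2\{\epsilon_0\}$ and a fixed data-dependent part. Taking $K_2\{\epsilon_0\}$ as the sum of $\tfrac{1}{1-a}$ times the Young residual and the $\bar r$-contribution $\bigl(2+(C_2\{\epsilon_0\})^a\bigr)$ term (using \eqref{barr}), the inductive bound returns $K_2^{\text{new}}\le aK_2+(1-a)K_2=K_2$. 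Finally $K_3\{\epsilon_0\}:=\bigl(\sqrt{G_1}+1\bigr)K_2\{\epsilon_0\}$ absorbs the elliptic gain \eqref{barq} for $\bigl([q_{k+1}],([r_k]^2[q_{k+1}])_x\bigr)$. Choosing $T_1\le T_2$ small enough to simultaneously enforce $T_1 K_2<\tfrac12$ (so $\bar r$ stays bounded below, cf.\ \eqref{barr}), $T_1 G_7<1$, $\exp\{T_1 G_8\}<2$, $T_1 G_3<1$, and each residual product $T_1\cdot(\text{const})\le\epsilon_0$ closes the induction and proves the lemma.
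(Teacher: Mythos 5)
Your proposal matches the paper's own proof in both structure and substance: you build the base iterate from Lemma \ref{lemma1} and the auxiliary system \eqref{startingsystem}, invoke the linearized-system estimates \eqref{put}, \eqref{puv}, \eqref{smtan}, \eqref{sv}, \eqref{barq} to propagate the $A_m$ and $B_m$ bounds, use Young's inequality $x^a y\le ax+(1-a)y^{1/(1-a)}$ to close the $K_2$ loop in the presence of the factor $\bigl(1+\vertiii{(\bar P,\bar u,\bar s,\bar r)}_{m,T}^a\bigr)$ in \eqref{puv}, take $K_3=(\sqrt{G_1}+1)K_2$ to absorb the elliptic estimate for $([q_{k+1}],([r_k]^2[q_{k+1}])_x)$, and finally shrink $T_1$ to make the exponential factors, the $\bar r$-lower bound, and the $T_1\cdot(\text{const})$ residuals all cooperate. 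This is the same argument as in the paper; the only imprecision is the remark that a bound of order $K_2^a$ ``does not close for $a<1$'' --- it \emph{would} close qualitatively since $a<1$ makes the map sublinear; the role of Young's inequality here is rather to produce an explicit admissible value of $K_2\{\epsilon_0\}$, but this is a cosmetic quibble and does not affect the correctness of the argument.
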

\begin{lemma}[contraction in low norm]\label{contraction}
	A $T_0>0$ can be found such that if $T\leq T_0$, there exist functions $\left(P,u,s,q\right)\in C\left\{[0,T],L^2(\mathbb{I})\right\}$ satisfy
	\begin{align}
		\big(\,[P_k],[u_k],[s_k],[q_k]\,\big)\rightarrow\left(P,u,s,q\right)\ \text{in}\ \ C\{[0,T],L^2(\mathbb{I})\},\quad \text{as}\ k\rightarrow\infty.
	\end{align}
	\begin{proof}
		If system \eqref{cns10} is denoted as $E_{k+1}$, we subtract $E_k$ from $E_{k+1}$ and get the equations for $\left(\widetilde{[P_{k}]},\widetilde{[u_{k}]},\widetilde{[s_{k}]},\widetilde{[q_{k}]}\right):=\Big([P_{k+1}]-[P_{k}],\ [u_{k+1}]-[u_{k}],\ [s_{k+1}]-[s_{k}],\ [q_{k+1}]-[q_{k}]\Big)$ that
		\begin{equation}
		\label{cns11}
		\left\{	
		\begin{aligned}
		&\widetilde{[P_{k}]}_t+\frac{C_v+1}{C_v}([P_k]+1)[\rho_k]\widetilde{[U_{k}]}_x=\mathbb{R}_n^1,\\[2mm]
		&\widetilde{[u_{k}]}_t+[r_k]^2\widetilde{[P_{k}]}_x=\mathbb{R}_n^2,\\[2mm]
		&\widetilde{[s_{k}]}_t=\mathbb{R}_n^3,\\[2mm]
		&\frac{\widetilde{[q_{k}]}}{[r_k]^2[\rho_k]}-\Big([\rho_k]\widetilde{[Q_k]}_{x}\Big)_x
		=\mathbb{R}_n^4.
		\end{aligned}	
		\right.
		\end{equation}
		Here,
		\begin{align}\label{UkQk}
			&\widetilde{[U_{k}]}:=[r_k]^2[u_{k+1}]-[r_{k-1}]^2[u_{k}],\qquad\
			\widetilde{[Q_{k}]}:=[r_k]^2[q_{k+1}]-[r_{k-1}]^2[q_{k}],\\[3mm]\notag
			&\mathbb{R}_n^1=-\left(\frac{C_v+1}{C_v}([P_k]+1)[\rho_k]
			-\frac{C_v+1}{C_v}([P_{k-1}]+1)[\rho_{k-1}]\right)\left([r_{k-1}]^2[u_{k}]\right)_x\\[-1.5mm]\notag
			&\qquad\
			-\frac{1}{C_v}[\rho_k]\widetilde{[Q_k]}_x
			 -\Big([\rho_k]-[\rho_{k-1}]\Big)\frac{\left([r_{k-1}]^2[q_{k}]\right)_x}{C_v},\\[2mm]\notag
			&\mathbb{R}_n^2=-\left([r_k]^2-[r_{k-1}]^2\right)[P_k]_x,\qquad
			\mathbb{R}_n^3=-\frac{\widetilde{[Q_k]}_x}{[\theta_k]}-\left(\frac{1}{[\theta_k]}-\frac{1}{[\theta_{k-1}]}\right)\left([r_{k-1}]^2[q_{k}]\right)_x,\\[2mm]\notag
			&\mathbb{R}_n^4=-\left(\frac{1}{[r_k]^2[\rho_k]}-\frac{1}{[r_{k-1}]^2[\rho_{k-1}]}\right)[q_k]
			+\Big(\big([\rho_k]-[\rho_{k-1}]\big)\left([r_{k-1}]^2[q_{k}]\right)_x\Big)_x\\\notag
			&\qquad\
			-\frac{4[\theta_k]^3}{(C_v+1)[\rho_k]}\Big(\widetilde{[P_{k-1}]}_x+\left([P_{k}]+1\right)\widetilde{[s_{k-1}]}_x\Big)
			-\left(\frac{4[\theta_k]^3}{(C_v+1)[\rho_k]}-\frac{4[\theta_{k-1}]^3}{(C_v+1)[\rho_{k-1}]}\right)[P_{k-1}]_x\\\notag
			&\qquad\
			-\left(\frac{4[\theta_k]^3}{(C_v+1)[\rho_k]}\left([P_{k}]+1\right)-\frac{4[\theta_{k-1}]^3}{(C_v+1)[\rho_{k-1}]}\left([P_{k-1}]+1\right)\right)[s_{k-1}]_x.
		\end{align}
		Here, just like what we did in \eqref{barq}, the reason why we treat $\widetilde{[U_{k}]}$ and $\widetilde{[Q_{k}]}$ as a whole is to avoid $\widetilde{[u_{k-1}]}_x$ emerging from $[r_k]_x-[r_{k-1}]_x$, which would make the order of derivatives not balanced in \eqref{contractionPus}. Then we compute $\eqref{cns11}_1\times\frac{C_v}{(C_v+1)[\rho_k]}\widetilde{[P_{k}]}+\eqref{cns11}_2\times\frac{1}{[r_k]^2}\widetilde{[U_k]}+\eqref{cns11}_3\times\widetilde{[s_{k}]}$ and integrate the resultant equality over $\mathbb{I}$, which gives
		\begin{align}\notag
		\frac{{\rm d}}{{\rm d}t}\int_{\mathbb{I}}&\left(\frac{C_v}{(C_v+1)[\rho_k]}\widetilde{[P_{k}]}^2
		+\frac{\widetilde{[u_{k}]}^2}{2}
		+\frac{[r_k]\widetilde{[r_{k-1}]}[u_k]\widetilde{[u_{k}]} }{[r_k]^2}
		+\frac{\widetilde{[r_{k-1}]}[r_{k-1}][u_k]\widetilde{[u_{k}]} }{[r_k]^2}
		+\frac{\widetilde{[s_{k}]}^2}{2}
		\right){\rm d}x\\\label{Pusk}
		&\lesssim  \left\|\left(\widetilde{[P_{k}]},\widetilde{[u_{k}]},\widetilde{[s_{k}]}\right)(t)\right\|^2
		+\left\|\left(\widetilde{[Q_{k}]}_x,\widetilde{[P_{k-1}]},\widetilde{[s_{k-1}]}\right)(t)\right\|^2
		+T\cdot\sup_{t\in[0,T]}\left\|\widetilde{[u_{k-1}]}(t)\right\|^2.
		\end{align}
		Integrate \eqref{Pusk} over $[0,t]$ \ep{$0\leq t\leq T$} and we have
		\begin{align*}
			 \left\|\left(\widetilde{[P_{k}]},\widetilde{[u_{k}]},\widetilde{[s_{k}]}\right)(t)\right\|^2
			 \lesssim& \int_{0}^t\left\|\left(\widetilde{[P_{k}]},\widetilde{[u_{k}]},\widetilde{[s_{k}]}\right)(\tau)\right\|^2{\rm d}\tau
			 +\int_0^t\left\|\left(\widetilde{[Q_{k}]}_x,\widetilde{[P_{k-1}]},\widetilde{[s_{k-1}]}\right)(\tau)\right\|^2{\rm d}\tau\\
			 &+t\cdot\sup_{t\in[0,T]}\left\|\widetilde{[u_{k-1}]}(t)\right\|^2.
			 \qquad\quad\text{\ep{$T<1$ set in Lemma \ref{boundedness}}}
		\end{align*}
		With $Gr\ddot{o}nwall's$ inequality, we have for some positive constant $C$ that
		\begin{align}\label{Puskl}
		\left\|\left(\widetilde{[P_{k}]},\widetilde{[u_{k}]},\widetilde{[s_{k}]}\right)(t)\right\|^2\leq CTe^{CT}\sup_{t\in[0,T]}\left\|\left(\widetilde{[Q_{k}]}_x,\widetilde{[P_{k-1}]},\widetilde{[u_{k-1}]},\widetilde{[s_{k-1}]}\right)(t)\right\|^2.
		\end{align}
		Now we compute $\eqref{cns11}_4\times\widetilde{[Q_k]}$, integrate the resultant equality over $\mathbb{I}$ and we have for for some positive constant $C$ that
		\begin{align}\label{qk}
			\left\|\widetilde{[q_{k}]}\right\|^2
			+\left\|\widetilde{[Q_{k}]}_x\right\|^2\leq C\left\|\left(\widetilde{[P_{k-1}]},\widetilde{[u_{k-1}]},\widetilde{[s_{k-1}]}\right)\right\|^2.
		\end{align}
		Substitute \eqref{qk} into \eqref{Puskl} and we have
		\begin{align}\label{contractionPus}
			\left\|\left(\widetilde{[P_{k}]},\widetilde{[u_{k}]},\widetilde{[s_{k}]}\right)(t)\right\|^2\leq CTe^{CT}\sup_{t\in[0,T]}\left\|\left(\widetilde{[P_{k-1}]},\widetilde{[u_{k-1}]},\widetilde{[s_{k-1}]}\right)(t)\right\|^2.
		\end{align}
		We can see there exist a fixed constant $0<\gamma<1$ and a small $T_0>0$ such that if $T\leq T_0$, then $CTe^{CT}\leq\gamma$. According to Banach fixed-point theorem, there exist  $\left(P,u,s\right)\in C\{[0,T],L^2(\mathbb{I})\}$ such that
			\begin{align}\label{Puslimit}
		\big(\,[P_k],[u_k],[s_k]\,\big)\rightarrow\left(P,u,s\right)\ \text{in}\ \ C\{[0,T],L^2(\mathbb{I})\},\quad \text{as}\ k\rightarrow\infty.
		\end{align}
With \eqref{qk} and \eqref{contractionPus}, we have for some $q\in C\{[0,T],L^2(\mathbb{I})\}$	that
\begin{align}\label{qlimit}
	[q_k]\rightarrow q\ \text{in}\ \ C\{[0,T],L^2(\mathbb{I})\},\quad \text{as}\ k\rightarrow\infty,
\end{align}
according to Cauchy's Criterion.			
 	\end{proof}
\end{lemma}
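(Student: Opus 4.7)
The plan is to perform $L^2$ energy estimates on the system of equations satisfied by the successive differences, combine them with an elliptic estimate for the difference of the fluxes, and then choose $T_0$ small so that the iteration map becomes a strict contraction on $C\bigl([0,T];L^2(\mathbb{I})\bigr)$. First I would introduce
\begin{align*}
\left(\widetilde{[P_k]},\widetilde{[u_k]},\widetilde{[s_k]},\widetilde{[q_k]}\right):=\bigl([P_{k+1}]-[P_k],\ [u_{k+1}]-[u_k],\ [s_{k+1}]-[s_k],\ [q_{k+1}]-[q_k]\bigr)
\end{align*}
and subtract the $k$-th copy of \eqref{cns10} from the $(k+1)$-th copy. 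Because the coefficients depend on $[r_k],[\rho_k],[\theta_k],[P_k]$, the right-hand sides will contain the step-$(k-1)$ differences, which by Lemma \ref{boundedness} enjoy uniform $B_m$-bounds and can be treated as controlled source terms.

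Next I would test the three evolution equations against, respectively, $\tfrac{C_v}{(C_v+1)[\rho_k]}\widetilde{[P_k]}$, $\tfrac{1}{[r_k]^2}\widetilde{[U_k]}$, and $\widetilde{[s_k]}$. The crucial modeling choice is to group the combinations as $\widetilde{[U_k]}:=[r_k]^2[u_{k+1}]-[r_{k-1}]^2[u_k]$ and $\widetilde{[Q_k]}:=[r_k]^2[q_{k+1}]-[r_{k-1}]^2[q_k]$ \ep{as in \eqref{UkQk}}: otherwise the expansion of $[r_k]^2-[r_{k-1}]^2$ would produce $\widetilde{[r_{k-1}]}_x$ and hence a spatial derivative of $\widetilde{[u_{k-1}]}$, which is unavailable at the $L^2$-level. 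With this grouping, the boundary contributions vanish thanks to \eqref{kbdry}, the principal symmetric cross terms cancel, and the commutator pieces are handled by the $L^\infty$ bounds that follow from the induction hypothesis in Lemma \ref{boundedness}. The resulting energy identity has the schematic form
\begin{align*}
\frac{\rm d}{{\rm d}t}\left\|\bigl(\widetilde{[P_k]},\widetilde{[u_k]},\widetilde{[s_k]}\bigr)\right\|^2\lesssim\left\|\bigl(\widetilde{[P_k]},\widetilde{[u_k]},\widetilde{[s_k]},\widetilde{[Q_k]}_x\bigr)\right\|^2+\left\|\bigl(\widetilde{[P_{k-1}]},\widetilde{[s_{k-1}]}\bigr)\right\|^2+T\sup_{[0,T]}\left\|\widetilde{[u_{k-1}]}\right\|^2,
\end{align*}
the last term originating from $[r_k]-[r_{k-1}]=\int_0^t\widetilde{[u_{k-1}]}\,d\tau$.

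For the elliptic part I would multiply the subtracted fourth equation by $\widetilde{[Q_k]}$ and integrate, exploiting the boundary condition $\widetilde{[q_k]}(t,0)=\widetilde{[q_k]}(t,1)=0$ and the positive lower bound for $[r_k]^2[\rho_k]$ provided by Lemma \ref{boundedness}. This yields
\begin{align*}
\left\|\widetilde{[q_k]}\right\|^2+\left\|\widetilde{[Q_k]}_x\right\|^2\lesssim\left\|\bigl(\widetilde{[P_{k-1}]},\widetilde{[u_{k-1}]},\widetilde{[s_{k-1}]}\bigr)\right\|^2,
\end{align*}
which I would then feed back into the energy inequality above. Applying Gr\"onwall's inequality on $[0,t]$ with $t\le T\le 1$ produces
\begin{align*}
\sup_{[0,T]}\left\|\bigl(\widetilde{[P_k]},\widetilde{[u_k]},\widetilde{[s_k]}\bigr)\right\|^2\le CT\,e^{CT}\sup_{[0,T]}\left\|\bigl(\widetilde{[P_{k-1}]},\widetilde{[u_{k-1}]},\widetilde{[s_{k-1}]}\bigr)\right\|^2,
\end{align*}
so that choosing $T_0$ with $CT_0e^{CT_0}<1$ makes the iteration a strict contraction. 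Banach's fixed-point theorem then yields $L^2$-limits $(P,u,s)$, and the elliptic estimate transfers this to $[q_k]\to q$ in $C\bigl([0,T];L^2(\mathbb{I})\bigr)$.

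The main obstacle I foresee is exactly the non-smallness of $[r_k]_x$ flagged in the remarks below \eqref{barrdefinition} and \eqref{UkQk}: because $[r_k]$ is defined via a time integral of $[u_k]$, its spatial derivative is not controlled by $\epsilon_0$, so any commutator of the form $[\partial_x,([r_k]^2-[r_{k-1}]^2)]$ acting on a solution component threatens to introduce one too many derivatives on $\widetilde{[u_{k-1}]}$ for an $L^2$-level contraction. Circumventing this requires treating the composite quantities $[r_k]^2[u_{k+1}]$ and $[r_k]^2[q_{k+1}]$ as single unknowns, together with carefully using the time-integral representation $\int_0^t\widetilde{[u_{k-1}]}\,d\tau$ so that the surviving non-derivative remainder is absorbed by the small factor $T$.
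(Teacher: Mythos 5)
Your proposal is correct and follows essentially the same route as the paper: subtract successive iterates, group $[r_k]^2[u_{k+1}]-[r_{k-1}]^2[u_k]$ and $[r_k]^2[q_{k+1}]-[r_{k-1}]^2[q_k]$ as single unknowns to avoid an extra derivative on $\widetilde{[u_{k-1}]}$, perform $L^2$ energy estimates on the hyperbolic part together with an elliptic estimate on the subtracted fourth equation, and close with Gr\"onwall to obtain the factor $CTe^{CT}$ and hence a contraction for small $T_0$. The multipliers, the origin of the $T\sup\|\widetilde{[u_{k-1}]}\|^2$ term from the time-integral formula for $[r_k]-[r_{k-1}]$, and the Banach fixed-point conclusion all match the paper's argument.
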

Since we have proved Lemma \ref{boundedness} and \ref{contraction}, by utilizing the Sobolev space interpolation inequality
\begin{align}\label{interpolation}
	\left\|f\right\|_{H^s}\leq C_m\left\|f\right\|_{H^m}^{\frac{s}{m}}\left\|f\right\|^{1-\frac{s}{m}},
\end{align}
we have for any $0<s<m$ that
\begin{multline}\label{l2contraction}
	\left\|\big(\,[P_k]-[P_l],\,[u_k]-[u_l],\,[s_k]-[s_l],\,[q_k]-[q_l]\,\big)\right\|_{H^s}\\
	\leq CK_3\{\epsilon_0\}\left\|\big(\,[P_k]-[P_l],\,[u_k]-[u_l],\,[s_k]-[s_l],\,[q_k]-[q_l]\,\big)\right\|^{1-\frac{s}{m}}.
\end{multline}
From \eqref{Puslimit}, \eqref{qlimit} and \eqref{l2contraction}, we conclude that
\begin{equation*}
	\lim_{k\rightarrow+\infty}\sup_{t\in[0,T_0]}\left\|\big(\,[P_k]-P,\,[u_k]-u,\,[s_k]-s,\,[q_k]-q\,\big)(t)\right\|_{H^{s}}=0.
\end{equation*}
If we choose $s>\frac{1}{2}+1$, Sobolev inequality implies that
\begin{equation}\label{Pusq}
	\big(\,[P_k],\,[u_k],\,[s_k],\,[q_k]\,\big)\rightarrow\big(\,P,\,u,\,s,\,q\,\big)\quad\text{in}\quad C\{[0,T_0],C^1(\mathbb{I})\}.
\end{equation}
From the structure of first three equation in \eqref{cns10}, we can see
\begin{equation*}
		\big(\,[P_k]_t,\,[u_k]_t,\,[s_k]_t\,\big)\rightarrow\big(\,P_t,\,u_t,\,s_t\,\big)\quad\text{in}\quad C\{[0,T_0],C(\mathbb{I})\},
\end{equation*}
and $\big(\,P,\,u,\,s\,\big)$ belong to $C^1\{[0,T_0]\times\mathbb{I}\}$. So the first three equation of \eqref{cns4} hold in classical sense. So \eqref{r_eq} holds. We have from \eqref{Pusq} that
\begin{equation*}
	\left([r_{k-1}]^2[q_k]\right)_x\rightarrow(r^2q)_x\quad\text{in}\quad C\{[0,T_0],L^2(\mathbb{I})\},
\end{equation*}
Together with \eqref{qk}, \eqref{interpolation}, it can be derived that
\begin{align}\label{r2q}
	\left([r_{k-1}]^2[q_k]\right)_x\rightarrow(r^2q)_x\quad\text{in}\quad C\{[0,T_0],C^1(\mathbb{I})\}.
\end{align}
With \eqref{r_eq}, \eqref{Pusq} and \eqref{r2q}, we have $q_x\in C\{[0,T_0],C^1(\mathbb{I})\}$, which indicate that the last equation of \eqref{cns4} also holds in classical sense.

From the technique in \cite[p.~34--46]{MR0748308}, we can raise the regularity of solution such that
\begin{equation*}
	(P,u,s,q,q_x)\in X_m([0,T_0],\mathbb{I}).
\end{equation*}
We omit the proof here. One can refer to the proof of Theorem $2.1(a)$ and $2.1(b)$ in \cite[p.~34--46]{MR0748308}. Now the proof of local existence completes. To obtain the global-in-time classical solution, first we denote $K_3^*\{\epsilon_0\}=\max\big\{K_3\{\epsilon_0\},K_3\{V_0\}\big\}$ and set $\epsilon_0$ is small enough such that $K_3^*\{\epsilon_0\}\leq\epsilon$. \ep{Here $\epsilon$ is defined in Proposition \ref{a priori} to ensure that the a priori estimates stand.} Second we set $T\leq T_0$. \ep{Here $T_0$ is fixed, since $\epsilon_0$ is already set.} And we have the classical solution in $[0,T]$. Third, with the a priori estimates, apply the method of continuity like what is stated before Section \ref{a priori estimates}. Finally, we finish the proof of Theorem \ref{thm}.

\section{Acknowledgements.} The research is supported by a grant from National Natural Science Foundation of China under contract No.12221001 and a grant from Science and Technology Department of Hubei Province under contract No.2020DFH002.

\bibliographystyle{abbrvnat}

\bibliography{references}

\end{document}